\apptocmd{\sloppy}{\hbadness 10000\relax}{}{}
\apptocmd{\sloppy}{\vbadness 10000\relax}{}{}
\newtheorem{introthm}{Theorem}
\newtheorem{theorem}{Theorem}[section]
\newtheorem{lemma}[theorem]{Lemma}
\newtheorem{corollary}[theorem]{Corollary}
\newtheorem{proposition}[theorem]{Proposition}
\newtheorem{observation}[theorem]{Observation}
\def\lec{\lesssim}
\theoremstyle{definition}
\newtheorem{definition}[theorem]{Definition}
\theoremstyle{remark}
\newtheorem{remark}[theorem]{Remark}
\newcommand{\VMO}{\mathrm{VMO}}
\newcommand{\BMO}{\mathrm{BMO}}
\newcommand{\gap}{\mathop\mathrm{gap}\nolimits}
\let\inf\relax \DeclareMathOperator*\inf{\vphantom{p}inf}
\newcommand{\Haus}{\mathcal{H}}
\newcommand{\RR}{\mathbb{R}}
\numberwithin{equation}{section}
\numberwithin{figure}{section}
\newcommand{\R}{\mathbb R}
\def \colonequals {\mathrel{\mathop:}=}
\newcommand{\excess}{\mathop\mathrm{excess}\nolimits}
\newcommand{\dist}{\mathop\mathrm{dist}\nolimits}
\newcommand{\spt}{\mathop\mathrm{spt}\nolimits}
\newcommand{\diam}{\mathop\mathrm{diam}\nolimits}
\def \colonequals {\mathrel{\mathop:}=}
\def\Xint#1{\mathchoice
{\XXint\displaystyle\textstyle{#1}}%
{\XXint\textstyle\scriptstyle{#1}}%
{\XXint\scriptstyle\scriptscriptstyle{#1}}%
{\XXint\scriptscriptstyle\scriptscriptstyle{#1}}%
\!\int}
\def\XXint#1#2#3{{\setbox0=\hbox{$#1{#2#3}{\int}$ }
\vcenter{\hbox{$#2#3$ }}\kern-.6\wd0}}
\def\dashint{\Xint-}
\def\res{\hbox{ {\vrule height .22cm}{\leaders\hrule\hskip.2cm} } }
 \numberwithin{equation}{section}
\title[Two-phase problem for harmonic measure with H\"older data]{Regularity of the singular set in a two-phase problem for harmonic measure with H\"older data}
\date{June 1, 2019}
\author{Matthew Badger}
\author{Max Engelstein}
\author{Tatiana Toro}
\thanks{M.~Badger was partially supported by NSF grants DMS 1500382 and DMS 1650546. M.~Engelstein was partially supported by an NSF postdoctoral fellowship, NSF DMS 1703306 and NSF DMS 1500771. T.~Toro was partially supported by NSF grants DMS 1361823 and DMS 1664867 and by the Craig McKibben \& Sarah Merner Professorship in Mathematics. This material is based upon work supported by the National Science Foundation under Grant No. DMS-1440140 while the authors were in residence at the Mathematical Sciences Research Institute in Berkeley, California, during the Spring 2017 semester.}
\subjclass[2010]{Primary 31B15, 35R35.}
\keywords{two-phase free boundary problems, harmonic measure, harmonic polynomials, singular set, uniqueness of blowups, higher order rectifiability, epiperimetric inequalities, Weiss-type monotonicity formula}
\address{Department of Mathematics\\ University of Connecticut\\ Storrs, CT 06269-3009}
\email{matthew.badger@uconn.edu}
\address{Department of Mathematics\\Massachusetts Institute of Technology\\Cambridge, MA, 02139-4307 }
\email{maxe@mit.edu}
\address{Department of Mathematics\\ University of Washington\\ Box 354350\\ Seattle, WA 98195-4350}
\email{toro@uw.edu}
\begin{document}

\begin{abstract} In non-variational two-phase free boundary problems for harmonic measure, we examine how the relationship between the interior and exterior harmonic measures of a domain $\Omega\subset\RR^n$ influences the geometry of its boundary. This type of free boundary problem was initially studied by Kenig and Toro in 2006 and was further examined in a series of separate and joint investigations by several authors. The focus of the present paper is on the singular set in the free boundary, where the boundary looks infinitesimally like zero sets of homogeneous harmonic polynomials of degree at least 2.  We prove that if the Radon-Nikodym derivative of the exterior harmonic measure with respect to the interior harmonic measure has a H\"older continuous logarithm, then the free boundary admits unique geometric blowups at every singular point and the singular set can be covered by countably many $C^{1,\beta}$ submanifolds of dimension at most $n-3$. This result is partly obtained by adapting tools such as Garofalo and Petrosyan's Weiss type monotonicity formula and an epiperimetric inequality for harmonic functions from the variational to the non-variational setting. \end{abstract}

\maketitle
\setcounter{tocdepth}{1}
\tableofcontents

\section{Introduction}\label{introandprelim}

Harmonic measure is a canonical measure, associated to any domain $\Omega\subset\RR^n$ ($n\geq 2$), which arises naturally via the solution of the classical Dirichlet problem. The measure is supported on (a subset of) the boundary of the domain. For a comprehensive introduction, see \cite{CKL}, \cite{GM}. In this paper, we continue to investigate the strong connection between analytic regularity of the harmonic measure and geometric regularity of the boundary of the domain in the two-phase setting. Thus, assume that $\Omega^+=\Omega\subset\RR^n$ is a domain with nonempty, connected exterior $\Omega^-=\RR^n\setminus\overline{\Omega}$. We refer to $\Omega^+$ as the interior domain and  to $\Omega^-$ as the exterior domain. Let $\omega^+$ and $\omega^-$ denote the harmonic measures on $\Omega^+$ and $\Omega^-$, respectively. If $\omega^+$ and $\omega^-$ are mutually absolutely continuous, then $\partial\Omega^+$ and $\partial\Omega^-$ coincide up to a set of mutual interior and exterior harmonic measure zero and we may form the Radon-Nikodym derivative of $\omega^-$ with respect to $\omega^+$, $$h\equiv \frac{d\omega^-}{d\omega^+}:\partial\Omega\rightarrow[0,+\infty].$$ The Radon-Nikodym theorem ensures that $h\in L^1(d\omega^+)$ for every pair of admissible domains $\Omega^+$ and $\Omega^-$, including \emph{a priori} domains with disconnected or irregular boundaries. The \emph{two-phase free boundary regularity problem for harmonic measure} is to determine the extent to which existence and/or additional control on $h$ limits the geometry of $\partial\Omega^+\cap\partial\Omega^-$.

Following \cite{kenigtorotwophase}, \cite{engelsteintwophase}, and \cite{BETHarmonicpoly}, it is known that if $\Omega^+$ and $\Omega^-$ are NTA domains (see \S\ref{s: notation}) and $\log h$ is an $\alpha$-H\"older continuous, real-valued function, then $$\partial\Omega=\Gamma_1\cup S,$$ where $\Gamma_1$ is an $(n-1)$-dimensional $C^{1,\alpha}$ embedded submanifold and $S$ (the ``singular set") is a closed set of Hausdorff and Minkowski dimension at most $n-3$. The goal of this paper is to strengthen our understanding of the singular set in the H\"older continuous regime. We prove that $\partial\Omega$ has unique blowups at points in $S$ (see Theorem \ref{t:unique}) and furthermore establish higher order, $C^{1,\beta}$ rectifiability of $S$ (see Theorem \ref{t:main}).

Our basic strategy is to use a Weiss type functional $W_d(r,u)$ in conjunction with an epiperimetric inequality for harmonic functions (for an overview, see \S\ref{s: WeissMonotonicity}). These are well known tools in the calculus of variations, but our use of them in this paper is novel as there is no underlying energy in our problem. Indeed, the utility of functionals like $W_d(r,u)$ is that they are monotone increasing in $r$ when $u$ minimizes an associated energy (see Garofalo and Petrosyan \cite{garofaloandpetrosyan}, who introduced $W_d(r,u)$ to study the thin obstacle problem). However, in the context of our two-phase problem for harmonic measure, we must analyze $W_d(r,v)$ for certain functions $v$ (built from $h$ and the Green's functions $u^\pm$ associated to $\Omega^\pm$), which are not minimizers, and there is no reason to expect that $W_d(r,v)$ is monotone in $r$. To overcome this deficit, we must build $v$ carefully and use a precise description of the local dimension of harmonic measure at singular points (see \S\ref{s: notation}) to prove that the functions $v$ are ``almost harmonic'' in the sense of distributions. The fact that $v$ is ``almost harmonic" then allows us to bound the growth of $W_d(r,v)$ (see \S\ref{s:boundW}) and establish regularity of the singular set $S$ (see \S\S\ref{s: c1betaconvergence} and \ref{s: rectifiability}).  Throughout, the possibility of degeneracy is the main difficulty (see \S\ref{s: WeissMonotonicity} for more discussion). We deal with this first by working with Almgren's frequency functional (see \S\ref{formulasandcomputations}) and later by using the aforementioned growth of $W_d(r,v)$ to prove that degeneracy does not occur (in this approach we are motivated by work of Garofalo, Petrosyan, and Smit Vega Garcia \cite{gpsvgepiperimetric}).

\subsection{Background and statement of main results}\label{s:background}

Azzam, Mourgoglou, Tolsa, and Volberg recently resolved a long-standing conjecture of Bishop \cite{Bishop-questions} on the two-phase problem for harmonic measure on domains in space. They proved the following:

\begin{introthm}[see Azzam et. al. {\cite{amtv-twophase}}] \label{intro:a} Let $\Omega^+=\Omega\subseteq\RR^n$ and  $\Omega^-=\RR^n\setminus\overline{\Omega}$ be complementary domains in $\RR^n$, $n\geq 3$, equipped with harmonic measures $\omega^\pm$, respectively. Assume that $\omega^+\ll\omega^-\ll\omega^+$. Then $\omega^\pm$ is Lipschitz graph rectifiable: $\omega^\pm(\RR^n\setminus G)=0$, where $G= \bigcup_{i=1}^\infty \Gamma_i$ for some $\Gamma_i\subseteq\RR^n$, which are isometric copies of $(n-1)$-dimensional Lipschitz graphs in $\RR^n$; moreover $\omega^\pm \res G\ll \Haus^{n-1}\res G \ll \omega^\pm \res G$, where $\Haus^{n-1}$ denotes codimension one Hausdorff measure and $\mu\res A$ denotes a measure $\mu$ restricted to a set $A$.\end{introthm}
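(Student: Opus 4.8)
The plan is to reduce the rectifiability assertion to two density estimates for the harmonic measures $\omega^{\pm}$ and then feed these into the geometric measure theory of harmonic measure developed for the one-phase problem. The target of the main argument is: for $n\geq 3$, at $\omega^{+}$-a.e. $x\in\partial\Omega$,
\[
 0\;<\;\liminf_{r\to 0}\frac{\omega^{+}(\ball(x,r))}{r^{\,n-1}}\;\leq\;\limsup_{r\to 0}\frac{\omega^{+}(\ball(x,r))}{r^{\,n-1}}\;<\;\infty,
\]
and likewise for $\omega^{-}$ (mutual absolute continuity makes the two statements equivalent). Granting this, the finite upper density forces $\omega^{+}$ to be absolutely continuous with respect to $\Haus^{n-1}\res\partial\Omega$ off an $\omega^{+}$-null set, by the standard comparison of a measure with Hausdorff measure via its upper density (applied on the sets where the density is bounded by $k$ and summed over $k$); then the theorem on rectifiability of harmonic measure of Azzam--Hofmann--Martell--Mayboroda--Mourgoglou--Tolsa--Volberg --- whose proof runs through the Nazarov--Tolsa--Volberg solution of the codimension-one David--Semmes problem --- shows that $\omega^{+}$ is $(n-1)$-rectifiable: $\omega^{+}(\RR^{n}\setminus G)=0$ with $G=\bigcup_{i}\Gamma_{i}$, each $\Gamma_{i}$ an isometric copy of a Lipschitz graph. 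On the rectifiable set $G$ one has $\omega^{+}\res G\ll\Haus^{n-1}\res G$, and since $G$ is rectifiable its Hausdorff measure has density one a.e.; the positive lower density then makes $d\omega^{+}/d(\Haus^{n-1}\res G)$ positive a.e., so $\Haus^{n-1}\res G\ll\omega^{+}$. The statements for $\omega^{-}$ and the full mutual absolute continuity with $\Haus^{n-1}\res G$ follow formally from $\omega^{+}\ll\omega^{-}\ll\omega^{+}$.

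The heart of the matter is the density dichotomy, and the tool I would use is the Alt--Caffarelli--Friedman monotonicity formula. Fix a boundary point, say $x_{0}=0$, and let $u^{\pm}$ be the Green's functions of $\Omega^{\pm}$ with poles at fixed points far from $0$, extended by $0$ to $\RR^{n}$; because $\Omega^{+}$ and $\Omega^{-}$ are complementary, near $0$ each $u^{\pm}$ is a nonnegative subharmonic function vanishing at $0$ with $u^{+}u^{-}\equiv 0$, so
\[
 \Phi(r)\;=\;\frac{1}{r^{4}}\left(\int_{\ball(0,r)}\frac{|\nabla u^{+}|^{2}}{|y|^{\,n-2}}\,dy\right)\left(\int_{\ball(0,r)}\frac{|\nabla u^{-}|^{2}}{|y|^{\,n-2}}\,dy\right)
\]
is nondecreasing in $r$ for $r$ small. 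Now $\Phi(r)$ is finite for $r$ of order one, using only finiteness of the Dirichlet energy of $u^{\pm}$ away from their poles; and at $\omega^{+}$-a.e. $x_{0}$ the Radon--Nikodym derivative $h=d\omega^{-}/d\omega^{+}$ is finite and strictly positive, so by the Lebesgue density theorem $\omega^{-}(\ball(x_{0},r))\approx\omega^{+}(\ball(x_{0},r))$ as $r\to0$. Feeding this into the comparison between the Green's function of a domain and its harmonic measure (the Caffarelli--Fabes--Mortola--Salsa-type estimate, once the relevant non-degeneracy is known) gives $\int_{\ball(0,r)}|\nabla u^{\pm}|^{2}|y|^{2-n}\,dy\approx\bigl(\omega^{\pm}(\ball(x_{0},r))/r^{\,n-2}\bigr)^{2}$, hence $\liminf_{r\to0}\Phi(r)>0$. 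Monotonicity squeezes $\Phi(r)$ between two positive constants for all small $r$; since $u^{+}$ or $u^{-}$ vanishing to order $>1$ at $x_{0}$ would push one factor below $c\,r^{2}$ while the other stays $\leq C\,r^{2}$, this means both $u^{\pm}$ vanish to exactly first order at $x_{0}$, i.e.\ $\omega^{\pm}(\ball(x_{0},r))\approx r^{\,n-1}$. In particular the degenerate points --- where the blowup would be the zero set of a harmonic polynomial of degree $\geq2$ --- carry no $\omega^{+}$-mass, consistent with the smaller dimension of the singular set.

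The step I expect to be the main obstacle is supplying the non-degeneracy of $\Omega^{\pm}$ needed to make the Green's-function/harmonic-measure comparison, and thus the whole ACF argument, legitimate: a priori the complementary domains are arbitrary, so there is no reason they have corkscrew points, satisfy a capacity density condition, or carry doubling harmonic measure near a typical boundary point. The resolution must come from bootstrapping the two-phase hypothesis. Since $\omega^{+}$ and $\omega^{-}$ are mutually absolutely continuous, at $\omega^{+}$-a.e. $x_{0}$ \emph{both} $\omega^{+}(\ball(x_{0},r))$ and $\omega^{-}(\ball(x_{0},r))$ are positive for small $r$ and, by regularity of the Radon--Nikodym derivative along a full set of points and scales, comparable to one another; a Bourgain-type lower estimate for harmonic measure then shows that both $\overline{\Omega^{+}}\cap\ball(x_{0},r)$ and $\overline{\Omega^{-}}\cap\ball(x_{0},r)$ carry substantial capacity for a.e.\ such scale, which is precisely the non-degeneracy that validates the comparison for $u^{\pm}$. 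Assembling these pointwise-a.e.\ facts into the global decomposition $G=\bigcup_{i}\Gamma_{i}$ and the stated mutual absolute continuity is then a covering argument together with the rectifiability criterion above. One should also track the dependence on the Green's-function poles and on the base points of $\omega^{\pm}$ throughout: one works at scales where the poles are irrelevant and uses the change-of-pole formula to compare base points, so the (local) density statements are independent of these choices.
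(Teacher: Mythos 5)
This statement is Theorem~\ref{intro:a}, which the paper quotes from \cite{amtv-twophase} and does not prove; so your proposal can only be judged against the argument of Azzam--Mourgoglou--Tolsa--Volberg itself. Part of your sketch is sound and is indeed how that proof begins: for arbitrary complementary domains the Green functions extended by zero are nonnegative, subharmonic off their poles, with $u^+u^-\equiv 0$, so the Alt--Caffarelli--Friedman functional $\Phi$ is monotone and finite at a fixed scale; a cutoff/Cauchy--Schwarz argument (no nondegeneracy needed) gives $\int_{B(x,2r)}|\nabla u^{\pm}|^2|y-x|^{2-n}dy\gtrsim\bigl(\omega^{\pm}(B(x,r))/r^{n-2}\bigr)^2$, and Besicovitch differentiation of $\omega^-$ against $\omega^+$ then yields $\limsup_{r\to0}\omega^{+}(B(x,r))/r^{n-1}<\infty$ at $\omega^{+}$-a.e.\ $x$, hence $\omega^{+}\ll\Haus^{n-1}$ off a null set. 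The genuine gap is the other half of your density dichotomy. The ACF formula is monotone \emph{nondecreasing}, so for small $r$ it only gives $\Phi(r)\le\Phi(r_0)$: it bounds the product of the two energies from \emph{above} at small scales and gives no lower bound whatsoever. Your claim that $\liminf_{r\to0}\Phi(r)>0$, and hence that monotonicity ``squeezes $\Phi$ between two positive constants'' forcing first-order vanishing of $u^{\pm}$, would require $\Phi(0^+)>0$; but $\Phi(0^+)=0$ is exactly the degenerate alternative (blowups of degree $\ge2$ or worse), and excluding it on a set of positive $\omega^+$-measure is the hard core of the problem, not a consequence of monotonicity. Moreover the two-sided comparison $\int_{B_r}|\nabla u^{\pm}|^2|y|^{2-n}\approx\bigl(\omega^{\pm}(B(x,r))/r^{n-2}\bigr)^2$ that you feed into this step is a CFMS-type estimate whose upper half needs NTA-type nondegeneracy; as you use it, the argument for the positive lower density is circular (it presupposes the very density lower bound you are trying to prove).

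Your proposed repair of the nondegeneracy---that mutual absolute continuity plus a ``Bourgain-type lower estimate'' yields a capacity density condition at a.e.\ point---does not work: Bourgain's lemma runs from capacity density of the complement to lower bounds on harmonic measure, not conversely, and comparability of $\omega^{+}(B(x,r))$ and $\omega^{-}(B(x,r))$ gives neither corkscrews, nor CDC, nor doubling for general complementary domains. This matters because the actual proof in \cite{amtv-twophase} does not establish a pointwise positive lower $(n-1)$-density at all. After the finite-upper-density step it proceeds by controlling the (truncated) Riesz transform of $\omega^{+}$ through the gradient of the Green function, using the ACF bound and stopping-time/maximal arguments, and then invokes the Nazarov--Tolsa--Volberg solution of the codimension-one David--Semmes problem (in the non-AD-regular form due to Nazarov--Tolsa--Volberg and Azzam--Tolsa, or equivalently the one-phase rectifiability theorem you cite, which is itself built on that machinery) to conclude rectifiability; the second half of the conclusion, $\Haus^{n-1}\res G\ll\omega^{\pm}\res G$, requires substantial additional work there and does not follow from the soft density-one argument you outline. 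So the reduction ``two density estimates $\Rightarrow$ theorem'' is fine in outline for the first half, but the lower density estimate is unproved (and is not the route the known proof takes), which leaves both the $\Haus^{n-1}\res G\ll\omega^{\pm}$ assertion and your claimed a.e.\ first-order vanishing of $u^{\pm}$ unsupported.
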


For related prior work, see \cite{kenigpreisstoro},  \cite{amt-twophase}. Also see the recent preprint \cite{amt-ur}, which explores quantitative conditions on $\omega^\pm$ that ensure $\partial\Omega$ contains $\omega^\pm$ big pieces of uniformly rectifiable sets. We note that the conclusion of Theorem A makes no assertion about the dimension of the $\omega^\pm$ null set $\partial\Omega\setminus G$. In principle, the Hausdorff or Minkowski dimension of $\partial\Omega\setminus G$ could exceed  $n-1$. See \cite[\S2]{GarnettOFarrell} for an example, with $n=2$, where $\partial\Omega\setminus G$ has positive $\Haus^1$ measure.

By imposing further restrictions on the relationship between interior harmonic measure $\omega^+$ and exterior harmonic measure $\omega^-$ than in Theorem A, one is able to say more about the fine geometry of the free boundary:

\begin{introthm}[see Badger, Engelstein, Toro {\cite{BETHarmonicpoly}}] \label{intro:b} In addition to the hypothesis of Theorem \ref{intro:a}, assume $\Omega^+$ and $\Omega^-$ are NTA domains and the Radon-Nikodym derivative $h=d\omega^-/d\omega^+$ satisfies $\log h\in \VMO(d\omega^+)$ or $\log h\in C(\partial\Omega)$.  Then: \begin{enumerate}
\item There exist $d_0\geq 1$ depending on at most $n$ and the NTA constants of $\Omega^+$ and $\Omega^-$ such that $\partial\Omega$ is locally bilaterally well approximated (in a Reifenberg sense) by zero sets of harmonic polynomials $p:\RR^n\rightarrow\RR$ of degree at most $d_0$.
\item The boundary $\partial\Omega$ can be partitioned into disjoint sets $\Gamma_d$ ($1\leq d\leq d_0$), where $x\in \Gamma_d$ if and only if tangent sets (geometric blowup) of $\partial\Omega$ at $x$ are zero sets of homogeneous harmonic polynomials $q:\RR^n\rightarrow\RR$ of degree $d$.
\item The ``regular set'' $\Gamma_1$ is relatively open and dense in $\partial\Omega$.
\item The boundary $\partial\Omega$ has upper Minkowski dimension $n-1$, while the ``singular set'' $\partial\Omega\setminus \Gamma_1=\Gamma_2\cup\dots\cup\Gamma_{d_0}$ has upper Minkowski dimension at most $n-3$.\end{enumerate}
\end{introthm}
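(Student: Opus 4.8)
The plan is to run a blowup analysis of the two-sided Green's function. Fix $x_0\in\partial\Omega$, take Green's functions $u^\pm\ge 0$ for $\Omega^\pm$ with poles far from $x_0$ (or at infinity), and set $u=u^+-u^-$; this is continuous, positive on $\Omega^+$, negative on $\Omega^-$, vanishes on $\partial\Omega$, and satisfies the Kenig--Toro identity $\Delta u=(1-h)\,\omega^+$ in the distributional sense near $x_0$ (up to a harmless sign and normalization). First I would record the standard consequences of the Jerison--Kenig theory for NTA domains: $\omega^\pm$ are doubling with constants depending only on $n$ and the NTA data, and the Caffarelli--Fabes--Mortola--Salsa (CFMS) estimates give $u(x)\approx \omega^\pm\!\big(B(x_0,\dist(x,\partial\Omega))\big)\big/\dist(x,\partial\Omega)^{\,n-2}$ on the relevant side whenever $\dist(x,\partial\Omega)\approx|x-x_0|$. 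The hypothesis $\log h\in\VMO(d\omega^+)$ (or $\log h\in C(\partial\Omega)$) forces $h$ to be asymptotically constant at every boundary point, at a scale-uniform rate; together with the density bounds this should show that the rescalings $r^{-(n-1)}\Delta u(x_0+r\,\cdot)$ lose all their total variation as $r\to 0$, so that every blowup of $u$ at $x_0$ is globally harmonic on $\RR^n$.

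Next I would run Almgren's frequency function $N(r,u)=r\int_{B(x_0,r)}|\nabla u|^2\,\big/\,\int_{\partial B(x_0,r)}u^2$ (the Alt--Caffarelli--Friedman monotonicity formula applied to $u^\pm$ is an alternative route to the same structural facts). Because $u$ is only almost harmonic, $N(\cdot,u)$ is only almost monotone, but the quantitative $\VMO$ decay of $h$ contributes an error summable over dyadic scales; hence $d(x_0):=N(0^+,u)$ exists. Passing to a subsequential Almgren blowup $u_\infty$, the $W^{1,2}_{\loc}$-convergence plus the vanishing of the rescaled Laplacians forces $u_\infty$ to be harmonic, homogeneous of degree $d(x_0)$, and nontrivial; hence $u_\infty$ is a homogeneous harmonic polynomial and $d(x_0)\in\{1,2,\dots\}$. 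The two-sided corkscrew condition, which passes to the limit, makes $u_\infty$ nondegenerate on both sides, so the Hausdorff limit of $r^{-1}(\partial\Omega-x_0)$ is exactly the zero set $\{u_\infty=0\}$; this yields the stratification $\Gamma_d=\{x_0:d(x_0)=d\}$ of (ii), and a compactness argument over all $x_0$ and scales upgrades it to the uniform bilateral Reifenberg approximation of (i). The degree bound $d_0$ I would obtain from an a priori bound $N(r_0,u)\le d_0=d_0(n,\mathrm{NTA})$ at a fixed scale $r_0$ --- the CFMS estimates express both integrals through $\omega^\pm$, and the uniform doubling constant controls their ratio --- so that almost-monotonicity gives $d(x_0)\le d_0$ at every $x_0$.

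For (iii): once $x_0\in\Gamma_1$, $\partial\Omega$ is flat at $x_0$ at small scales, so the Kenig--Toro two-phase regularity theory (with $\VMO$ data the boundary is a $C^1$ graph near flat points) makes $\Gamma_1$ relatively open and $\bigcup_{d\ge2}\Gamma_d$ closed; density of $\Gamma_1$ follows from a ``tangents of tangents'' principle, since the zero set of any homogeneous harmonic polynomial of degree $\ge2$ has a nonempty smooth $(n-1)$-dimensional part, at which its own blowup is a hyperplane. For (iv): at a singular point the blowup is $\{q=0\}$ with $\deg q\ge2$, while the blowups of $\Omega^\pm$ are connected open sets with boundary $\{q=0\}$, forcing both $\{q>0\}$ and $\{q<0\}$ connected. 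Writing $V(q)=\{v\in\RR^n:q(\cdot+v)\equiv q\}$ for the linear spine, $\dim V(q)=n-1$ would make $q$ linear and $\dim V(q)=n-2$ would make $q=q(x_1,x_2)$, whence $\{q>0\}$ is a disjoint union of $\deg q\ge2$ sectors times $\RR^{n-2}$ --- disconnected; both are impossible, so $\dim V(q)\le n-3$ at every singular point. Federer's dimension reduction in quantitative form (a covering argument controlling $(n-3)$-Minkowski content) then caps the upper Minkowski dimension of $\bigcup_{d\ge2}\Gamma_d$ at $n-3$; and since the degree-$\le d_0$ zero sets have locally finite $(n-1)$-Minkowski content, (i) gives a covering of $\partial\Omega\cap B(x_0,1)$ by $\lesssim r^{-(n-1)}$ balls of radius $r$ (with a matching lower bound from their smooth parts), so $\partial\Omega$ has upper Minkowski dimension $n-1$.

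The hard part is the first step --- showing the blowups of $u$ are \emph{globally} harmonic. This is where the non-variational nature of the problem bites: $u$ minimizes no energy, so neither monotonicity of Almgren's frequency nor harmonicity of blowups is automatic, and both must be extracted from the interaction between the $\VMO$ decay of $h$ and sharp (asymptotically optimal) doubling and density estimates for $\omega^\pm$ --- one must prove the rescaled measures $r^{-(n-1)}\Delta u(x_0+r\,\cdot)$ shed all their mass. A secondary difficulty is promoting the Hausdorff dimension estimate for the singular set to the Minkowski estimate in (iv), which requires the quantitative rather than merely the qualitative form of Federer's dimension reduction.
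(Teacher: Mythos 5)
You should first note that the paper does not prove Theorem \ref{intro:b} at all: it is quoted as an amalgam of Kenig--Toro \cite{kenigtorotwophase} (part (i)), Badger \cite{badgerharmonicmeasure}, \cite{badgerflatpoints} (parts (ii), (iii)), and Badger--Engelstein--Toro \cite{BETHarmonicpoly} (part (iv)). Measured against those proofs, your sketch has one genuine gap at its core: you propose to define $d(x_0)=N(0^+,u)$ by an almost-monotone Almgren frequency whose error is ``summable over dyadic scales'' thanks to the ``quantitative $\VMO$ decay of $h$.'' But $\log h\in\VMO(d\omega^+)$ (or mere continuity) carries \emph{no rate}: the mean oscillation at scale $r$ can decay like $1/\log\log(1/r)$, the error in $\frac{d}{dr}N$ is of size $\mathrm{osc}(r)/r$, and the dyadic sum can diverge. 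So neither the existence of $N(0^+,u)$, nor the uniqueness of the blowup degree at a point (part (ii)), nor the transfer of the a priori bound $N(r_0,u)\le d_0$ down to $r=0$ follows from your mechanism. This is exactly the obstruction the present paper is organized around: almost-monotonicity of $N$ and of $W_d$ is only obtained in \S\S4--5 under the stronger hypothesis $\log h\in C^{0,\alpha}$, while under $\VMO$/continuous data the cited proofs avoid pointwise monotonicity entirely. Kenig--Toro run a compactness blowup of $u^\pm$ normalized by $\omega^\pm(B(Q_j,r_j))$, show $\omega_\infty^+=\omega_\infty^-$ (so harmonicity of $u_\infty=u_\infty^+-u_\infty^-$ comes from \emph{cancellation}, not from the rescaled Laplacians ``shedding mass''---and note your normalization $r^{-(n-1)}\Delta u(x_0+r\,\cdot)$ is the wrong one off $\Gamma_1$, where $\omega(B(x_0,r))\approx r^{n-2+d}$), and get the degree bound $d_0$ from a Liouville argument using the polynomial growth forced by the NTA doubling constants; Badger then proves that the degree is the \emph{same} for all tangent objects at a fixed point by a Preiss-type connectivity argument for the cone of tangent measures together with the separation properties of polynomial harmonic measures---no monotonicity formula enters.

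Two secondary points. Your route to openness of $\Gamma_1$ asserts that with $\VMO$ data the boundary is a $C^1$ graph near flat points; that is not available at this level of data (one only gets Reifenberg flatness with vanishing constant), and the actual proof of (iii) in \cite{badgerflatpoints} is a persistence/separation argument inside the local set approximation framework, with density indeed coming from the ``tangents of tangents'' idea you describe. For (iv), your spine argument ($\dim V(q)\le n-3$ because $\{q>0\}$ and $\{q<0\}$ are connected and a two-variable homogeneous harmonic polynomial of degree $\ge 2$ has $\ge 4$ alternating sectors) matches Remark \ref{propertiesofd} of the paper; but the upgrade from Hausdorff to upper Minkowski dimension is not a citation to a ``quantitative Federer reduction''---it is the main technical content of \cite{BETHarmonicpoly}, proved by uniform covering estimates for neighborhoods of the singular strata of harmonic polynomial zero sets of bounded degree, transferred to $\partial\Omega$ through the bilateral approximation of (i). You flag this as a difficulty, correctly, but as written it remains unproved in your proposal, as does part (ii).
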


For definitions of the terminology in Theorem \ref{intro:b}, see \S\ref{s: notation}. Theorem \ref{intro:b} is an amalgamation of several separate results: (i) was proved by Kenig and Toro in \cite{kenigtorotwophase}; (ii) and (iii) were proved by Badger in \cite{badgerharmonicmeasure} and \cite{badgerflatpoints}, respectively; and (iv) was proved by Badger, Engelstein, and Toro in \cite{BETHarmonicpoly} (also see \cite{localsetapproximation}). In fact, we proved in \cite{BETHarmonicpoly} that (ii) and (iii) hold on any closed set satisfying the bilateral approximation in (i). We also determined sharp estimates on the singular set in that scenario.  For a partial extension of Theorem \ref{intro:b} to a class of elliptic measures associated with variable coefficient operators, see Azzam and Mourgoglou \cite{am-elliptic}.
We reiterate that one distinction between the conclusion of Theorem \ref{intro:a} and Theorem \ref{intro:b} is that the former describes the global geometry of the free boundary up to a set of harmonic measure zero, while the latter describes the asymptotic geometry of the free boundary at every point.

The structural information and dimension estimates provided by Theorem \ref{intro:b} leave open the question of regularity of the free boundary when $\log f\in\VMO(d\omega^+)$ or $\log f\in C(\partial\Omega)$. On the other hand, regularity of $\Gamma_1$ has been addressed under a strengthened hypothesis:

\begin{introthm}[see Engelstein \cite{engelsteintwophase}] \label{intro:c} In addition to the hypothesis of Theorem \ref{intro:b}, assume that $\log h\in C^{l,\alpha}(\partial\Omega)$ for some $l\geq 0$ and $\alpha>0$ (resp.~$\log h\in C^\infty$, $\log h$ real analytic). Then the set $\Gamma_1$ is a $C^{l+1,\alpha}$ (resp.~ $C^\infty$, real analytic) $(n-1)$-dimensional manifold. \end{introthm}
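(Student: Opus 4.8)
The plan is to reduce the statement, near a point of $\Gamma_1$, to a classical two‑phase free boundary problem of transmission type, and then to bootstrap. The case $l=0$ is precisely the $C^{1,\alpha}$ regularity of $\Gamma_1$ recalled in the introduction (following \cite{kenigtorotwophase, engelsteintwophase, BETHarmonicpoly}), so assume $l\geq 1$; the cases $\log h\in C^\infty$ and $\log h$ real analytic are handled by the same scheme. Write $h=e^{\log h}$, a positive function in $C^{l,\alpha}(\partial\Omega)$, and fix $x_0\in\Gamma_1$. By definition of $\Gamma_1$ the tangent set of $\partial\Omega$ at $x_0$ is a hyperplane, so after a rotation and using the $l=0$ case I may assume that inside $B_\rho(x_0)$ the boundary is the graph $\{x_n=\phi(x')\}$ of some $\phi\in C^{1,\alpha}$ with $\phi(0)=0$, $\nabla'\phi(0)=0$, and $\Omega^\pm$ agrees there with $\{\pm(x_n-\phi(x'))>0\}$. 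Let $u^\pm$ be the Green's functions of $\Omega^\pm$ with fixed interior poles; near $x_0$ these are positive harmonic functions vanishing on $\partial\Omega$, and since $\partial\Omega\cap B_\rho(x_0)$ is $C^{1,\alpha}$, $\omega^\pm$ is represented there by the normal derivative of $u^\pm$, so $h=d\omega^-/d\omega^+$ translates into the two‑phase Bernoulli condition $|\nabla u^-|=h\,|\nabla u^+|$ on $\partial\Omega\cap B_\rho(x_0)$ (both gradients being normal there). By the Hopf lemma (applicable since $\partial\Omega$ is $C^{1,\alpha}$ near $x_0$) one has $\partial_\nu u^\pm\neq 0$, hence $\nabla u^\pm\neq 0$, in a neighbourhood of $x_0$; this non‑degeneracy is what makes the hodograph transform below legitimate.

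The next step is to flatten $\partial\Omega$ by a partial hodograph transform, carried out separately on each phase and then glued into a transmission problem across a flat interface. Shrinking $\rho$ and rotating so that $\partial_{x_n}u^+>0$ near $x_0$, the map $x\mapsto(x',u^+(x))$ is a $C^{1,\alpha}$‑diffeomorphism of $\Omega^+\cap B_\rho(x_0)$ onto a neighbourhood of $0$ in $\{y_n>0\}$ taking $\partial\Omega$ to $\{y_n=0\}$; if $x_n=\psi^+(y)$ denotes its inverse, then $\psi^+(y',0)=\phi(y')$ and $\Delta u^+=0$ becomes a uniformly elliptic quasilinear equation for $\psi^+$ on $\{y_n>0\}$ whose coefficients are real analytic functions of $\nabla\psi^+$, the uniform ellipticity coming from $0<c\leq|\nabla u^+|\leq C$ near $x_0$. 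Performing the analogous transform on $\Omega^-$ with $u^-$ and setting $\Psi(y):=\psi^+(y)$ for $y_n\geq 0$ and $\Psi(y):=\psi^-(y',-y_n)$ for $y_n<0$ — consistent since both restrict to $\phi$ on $\{y_n=0\}$ — yields a function $\Psi\in C^{1,\alpha}(\overline{B^+})\cap C^{1,\alpha}(\overline{B^-})$ solving uniformly elliptic quasilinear equations with real‑analytic coefficient structure on $B^+$ and on $B^-$. A direct computation converts the Bernoulli condition into a transmission condition on $\{y_n=0\}$ of the form $(\partial_n\Psi)^+=h_\flat\,(\partial_n\Psi)^-$, where $h_\flat$ is the pullback of $h$ by $y'\mapsto(y',\Psi(y',0))$ — a positive function, bounded away from $0$ near the origin — while continuity of $\Psi$ across $\{y_n=0\}$ is automatic. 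Thus $\Psi$ solves an elliptic transmission problem across a flat interface, with coefficients smooth in $\nabla\Psi$ and transmission coefficient $h_\flat$.

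The bootstrap then proceeds by the usual difference‑quotient argument for transmission problems. If $\Psi\in C^{k,\alpha}(\overline{B^\pm})$ for some $1\leq k\leq l$, then the coefficients lie in $C^{k-1,\alpha}$ and, since $\phi=\Psi(\cdot,0)\in C^{k,\alpha}$, one has $h_\flat\in C^{\min(l,k),\alpha}=C^{k,\alpha}$; differentiating the equations and the transmission condition in the tangential directions $y_1,\dots,y_{n-1}$ — so that the transmission inhomogeneity picks up one derivative of $h_\flat$ — invoking Schauder estimates for transmission (diffraction) problems in the spirit of Ladyzhenskaya–Uraltseva, and recovering the purely normal derivatives from the equations themselves, upgrades $\Psi$ to $C^{k+1,\alpha}(\overline{B^\pm})$. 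Starting at $k=1$ and iterating while $k\leq l$ yields $\Psi\in C^{l+1,\alpha}(\overline{B^\pm})$, hence $\phi=\Psi(\cdot,0)\in C^{l+1,\alpha}$, so $\partial\Omega$ is a $C^{l+1,\alpha}$ graph near $x_0$. As $x_0\in\Gamma_1$ was arbitrary and $\Gamma_1$ is relatively open in $\partial\Omega$ by Theorem~\ref{intro:b}(iii), $\Gamma_1$ is a $C^{l+1,\alpha}$ submanifold of dimension $n-1$. If $\log h\in C^\infty$ the iteration never terminates and $\Gamma_1\in C^\infty$; if $\log h$ is real analytic, one replaces the Schauder step by the analyticity theory for elliptic transmission problems with analytic coefficients and analytic data (Morrey's analyticity theorem; Kinderlehrer–Nirenberg's treatment of elliptic free boundary problems) and concludes that $\Gamma_1$ is real analytic.

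I expect the principal difficulty to be the faithful treatment of the two‑phase coupling: the hodograph transform must be performed on each phase and reassembled into a genuine transmission problem, and one needs Schauder (and, in the analytic regime, analyticity) estimates for elliptic transmission problems across a flat interface with a non‑constant, merely $C^{l,\alpha}$ transmission coefficient. A second point, routine here but indispensable, is the non‑degeneracy $\nabla u^\pm\neq 0$ near $x_0$: this is exactly what fails on the singular set $S=\Gamma_2\cup\dots\cup\Gamma_{d_0}$, where blowups are zero sets of homogeneous harmonic polynomials of degree $\geq 2$, and it explains why the hodograph argument yields no information about $S$ — which instead requires the Weiss‑type monotonicity formula and epiperimetric inequality developed in the body of the paper.
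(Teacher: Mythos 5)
The paper does not actually prove Theorem \ref{intro:c}; it quotes it from \cite{engelsteintwophase}, and its only hint at the method is Remark \ref{r:sharpregularity} (``Hodograph transform and then apply bootstrapping arguments with (weighted) Schauder estimates''). Your proposal follows exactly that route, and indeed essentially the route of the cited original proof for the higher-regularity part: identify $h$ with the Bernoulli quotient $|\nabla u^-|/|\nabla u^+|$ via the Poisson-kernel representation on a $C^{1,\alpha}$ piece of boundary, use nondegeneracy of $\partial_\nu u^\pm$ to perform a partial hodograph transform on each phase, glue into a quasilinear transmission problem across $\{y_n=0\}$ with interface condition $(\partial_n\Psi)^+=h_\flat(\partial_n\Psi)^-$, and bootstrap with tangential difference quotients and flat-interface Schauder (resp.\ Morrey/Kinderlehrer--Nirenberg analyticity) estimates; your handling of the $h_\flat\in C^{\min(l,k),\alpha}$ chicken-and-egg issue is the standard and correct one. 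So for $l\geq 1$ (given the $l=0$ statement) the sketch is sound and is not a different method.

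The one point you should be explicit about is the status of the base case. The $l=0$ assertion --- that near a point of $\Gamma_1$ (where, by definition, one only knows that blowups are hyperplanes) the boundary is in fact locally a $C^{1,\alpha}$ graph, with $u^\pm$ Lipschitz, nondegenerate, and $C^{1,\alpha}$ up to the boundary --- is the bulk of the content of Engelstein's theorem, obtained there through compactness/blowup classification and an improvement-of-flatness iteration. You dispose of it by citing the introduction, whose source is the very theorem being proved, so as a self-contained proof of the statement this is circular and every subsequent step (the graph structure, Hopf nondegeneracy, the legitimacy of the hodograph map) rests on it. Within the framing of the present paper, which treats Theorem \ref{intro:c} as quoted background, this is an acceptable reduction, but you should state clearly that you are proving only the inductive step $l\geq 1$ (and the $C^\infty$/analytic cases) on top of the known $C^{1,\alpha}$ regularity of $\Gamma_1$, rather than the full statement.
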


Theorem \ref{intro:c} demonstrates that higher-order regularity of the free boundary data ensures higher-order regularity of the regular set in free boundary.
The goal of the present paper is to extend the conclusion of Theorem \ref{intro:c} to the singular set $\partial\Omega\setminus\Gamma_1$. We establish uniqueness of blowups and pseudo-blowups, and regularity of the singular set in the H\"older continuous regime:

\begin{theorem}[uniqueness of blowups] \label{t:unique} In addition to the hypothesis of Theorem \ref{intro:b}, assume that $\log h\in C^{0,\alpha}(\partial\Omega)$ for some $\alpha>0$. For all $1\leq d\leq d_0$ and $x\in\Gamma_d$, there exists a homogeneous harmonic polynomial $q:\RR^n\rightarrow\RR$ of degree $d$ such that for all sequences $x_i\in\Gamma_d$ with $x_i\rightarrow x$ and sequences of scales $r_i\downarrow 0$, $$\lim_{i\rightarrow\infty}\frac{\partial\Omega-x_i}{r_i}=q^{-1}(0)\quad\text{in the Attouch-Wets topology.}$$ In particular, $\partial\Omega$ has a unique tangent set at every $x\in\partial\Omega$.\end{theorem}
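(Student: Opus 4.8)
\emph{Sketch of the argument.} The plan is to realize $\partial\Omega$ near a singular point as the nodal set of a carefully chosen, ``almost harmonic'' function and then to run a Weiss monotonicity / epiperimetric scheme, treating the non-harmonicity as a controllable perturbation; this follows the philosophy of Garofalo--Petrosyan \cite{garofaloandpetrosyan} and Garofalo--Petrosyan--Smit Vega Garcia \cite{gpsvgepiperimetric}, but without an ambient energy. Fix $x\in\Gamma_d$; translating and dilating we may take $x=0$, and since the hypotheses are stable under multiplying $\omega^-$ by a positive constant we may normalize $\log h(0)=0$. Let $u^\pm$ be suitably normalized Green's functions for $\Omega^\pm$ and, as a prototype, set $v\colonequals u^+-u^-$ (extended by zero off the respective domains). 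A Green-identity computation gives $\Delta v=(1-h)\,\omega^+$ in the sense of distributions, a signed Radon measure carried by $\partial\Omega$. Combining the $\alpha$-H\"older continuity of $\log h$ (with $\log h(0)=0$) with the precise local dimension of harmonic measure at a singular point of degree $d$, namely $\omega^+(B_r)\lesssim r^{\,n-2+d}$ (see \S\ref{s: notation}), yields the \emph{almost-harmonic estimate}
\[
|\Delta v|(B_r)\ \lesssim\ \Big(\sup_{B_r\cap\partial\Omega}|1-h|\Big)\,\omega^+(B_r)\ \lesssim\ r^{\,n-2+d+\alpha},
\]
which beats by a factor $r^\alpha$ the critical scaling $r^{\,n-2+d}$ of a degree-$d$ homogeneous function. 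Almgren's frequency functional (see \S\ref{formulasandcomputations}), which equals $d$ at $0$ because $0\in\Gamma_d$, also provides the matching nondegeneracy $\|v\|_{L^2(\partial B_r)}\approx r^{d}$. We then pass to the $L^2$-normalized rescalings $v_{0,r}(y)\colonequals v(ry)/d_r$ with $d_r\colonequals(r^{1-n}\int_{\partial B_r}v^2)^{1/2}$, so $\|v_{0,r}\|_{L^2(\partial B_1)}=1$.

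\emph{Weiss functional and controlled growth.} Introduce $W_d(r,v)=r^{-(n-2+2d)}\int_{B_r}|\nabla v|^2-d\,r^{-(n-1+2d)}\int_{\partial B_r}v^2$, which vanishes identically on degree-$d$ homogeneous harmonic polynomials. Differentiating in $r$ and feeding in the first-variation identities together with the almost-harmonic estimate (this is the content of \S\ref{s:boundW}) gives
\[
\frac{d}{dr}W_d(r,v)\ \geq\ \frac{2}{r^{\,n+2d}}\int_{\partial B_r}\Big(\partial_r v-\tfrac{d}{r}\,v\Big)^2\,d\mathcal H^{n-1}\ -\ C\,r^{\sigma-1}
\]
for some $\sigma=\sigma(n,d,\alpha)>0$. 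Hence $r\mapsto W_d(r,v)+Cr^\sigma/\sigma$ is nondecreasing, so $W_d(0^+,v)\colonequals\lim_{r\downarrow0}W_d(r,v)$ exists and is finite. Using the compactness already packaged in Theorem~\ref{intro:b} together with $\Delta v_{0,r}\to0$ and the nondegeneracy of $v$, every subsequential (locally uniform) limit of the $v_{0,r}$ is a nonzero degree-$d$ homogeneous harmonic polynomial $q$, and the corresponding subsequential limit of $(\partial\Omega)/r$ in the Attouch--Wets topology is its zero set $q^{-1}(0)$; moreover $W_d(1,q)=0=W_d(0^+,v)$ for any such $q$. This identifies the value $W_d(0^+,v)=0$ and reduces everything to proving that the full limit exists.

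\emph{Epiperimetric inequality, energy decay, and uniqueness at $x$.} Applying the epiperimetric inequality for degree-$d$ harmonic functions (see \S\ref{s: WeissMonotonicity}) --- there is $\kappa\in(0,1)$ so that the $d$-homogeneous extension of $v_{0,r}|_{\partial B_1}$ admits a competitor with Weiss energy at most $(1-\kappa)$ times its own --- and combining it with the derivative inequality above and the $r^{\sigma}$ error produces a differential inequality for $e(r)\colonequals W_d(r,v)-W_d(0^+,v)$, which is nonnegative up to an $O(r^\sigma)$ error, of the form $e'(r)\gtrsim \tfrac1r e(r)-Cr^{\sigma-1}$. Integrating yields $|W_d(r,v)-W_d(0^+,v)|\lesssim r^{\mu}$ for some $\mu>0$. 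Plugging this decay back through $\int_{\partial B_r}(\partial_r v-\tfrac dr v)^2\lesssim r^{\,n+2d}\big(\tfrac{d}{dr}W_d(r,v)+Cr^{\sigma-1}\big)$ and summing over dyadic radii gives $\int_0^1 r^{-1}\|\partial_r v_{0,r}\|_{L^2(\partial B_1)}\,dr<\infty$, so $v_{0,r}$ converges in $L^2(\partial B_1)$ as $r\downarrow0$ to a single degree-$d$ homogeneous harmonic polynomial $q$ --- no subsequence needed. The nondegeneracy of $v$ together with the bilateral approximation of Theorem~\ref{intro:b} upgrades this to $(\partial\Omega-x)/r\to q^{-1}(0)$ in the Attouch--Wets topology; and since every point of $\partial\Omega$ lies in some $\Gamma_d$ by Theorem~\ref{intro:b}(ii), this already proves that $\partial\Omega$ has a unique tangent set at every point. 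I expect the genuinely hard part to be the nondegeneracy: ensuring that $W_d(0^+,v)$ is the ``correct'' value and that the rescalings do not collapse, which is precisely where Almgren's frequency and the growth bound for $W_d(r,v)$ must be played against one another (following \cite{gpsvgepiperimetric}); in the variational setting an energy would make this automatic.

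\emph{Uniformity along $x_i\to x$.} Every constant above --- $\kappa$, $C$, $\sigma$, $\mu$, and the doubling and nondegeneracy constants for $v$ --- depends only on $n$, the NTA constants, $d$, and $\|\log h\|_{C^{0,\alpha}(\partial\Omega)}$, so by a compactness argument and upper semicontinuity of the frequency they can be taken uniform for base points $x'\in\Gamma_d$ in a fixed neighborhood of $x$. Thus there are $\rho_0,C_0,\mu>0$ with $\big\|v_{x',r}|_{\partial B_1}-q_{x'}\big\|_{L^2(\partial B_1)}\leq C_0 r^{\mu}$ for every such $x'$ and $0<r<\rho_0$, where $q_{x'}$ is the blowup at $x'$. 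Given $\varepsilon>0$, fix $\rho<\rho_0$ with $\|v_{x,\rho}|_{\partial B_1}-q_x\|_{L^2}<\varepsilon$; the map $x'\mapsto v_{x',\rho}$ is continuous near $x$ at this fixed scale, so $\|v_{x_i,\rho}|_{\partial B_1}-q_x\|_{L^2}<2\varepsilon$ for large $i$, whence $\|q_{x_i}-q_x\|_{L^2}<2\varepsilon+C_0\rho^\mu$. Letting $\varepsilon,\rho\downarrow0$ shows $q_{x_i}\to q_x$ in $L^2(\partial B_1)$, hence in $C^\infty_{\loc}$ (the homogeneous harmonic polynomials of degree $d$ form a finite-dimensional space), hence $q_{x_i}^{-1}(0)\to q_x^{-1}(0)$ in the Attouch--Wets topology. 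Finally, for any scales $r_i\downarrow0$, once $r_i<\rho_0$ the set $(\partial\Omega-x_i)/r_i$ lies within Attouch--Wets distance $\lesssim C_0 r_i^{\mu}$ of $q_{x_i}^{-1}(0)$ by the uniform decay and the geometric approximation, and since $q_{x_i}^{-1}(0)\to q_x^{-1}(0)$ the triangle inequality gives $(\partial\Omega-x_i)/r_i\to q^{-1}(0)$ with $q\colonequals q_x$, as desired.
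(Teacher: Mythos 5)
Your overall architecture — a jump function that is ``almost harmonic'' because of the H\"older datum and the local dimension of harmonic measure, Almgren's frequency, the Weiss functional combined with the epiperimetric inequality, an $L^2(\partial B_1)$ Cauchy estimate for the rescalings, and finally uniformity in the base point to treat sequences $x_i\to x$ — is the same as the paper's (\S\S\ref{sec:start}--\ref{s: c1betaconvergence}). The genuine gap is that you treat the nondegeneracy as an input rather than a conclusion. You invoke $\omega^+(B_r)\lesssim r^{\,n-2+d}$ to get $|\Delta v|(B_r)\lesssim r^{\,n-2+d+\alpha}$, you assert that $N(0^+,v)=d$ ``provides the matching nondegeneracy'' $\|v\|_{L^2(\partial B_r)}\approx r^d$, and you identify $W_d(0^+,v)=0$ by comparing with $L^2$-normalized blowups. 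None of these two-sided bounds is available at that stage: before the Weiss analysis the only control on the density quotient is Lemma \ref{growthofballsatsingularpoints}, which bounds $\omega^\pm(B(Q,r))/r^{\,n-2+d\pm\delta}$ for every $\delta>0$ but gives no bound with $\delta=0$; and the convergence of the frequency to $d$ by itself yields only growth of $H(r)$ up to sub-polynomial corrections, not $H(r)\simeq r^{\,n-1+2d}$ (one would need quantitative two-sided almost-monotonicity of $N$, which is not proved — Theorem \ref{t: growthrateforW} is one-sided). Indeed, the statement $0<\lim_{r\downarrow0}\omega^-(B(Q,r))/r^{\,n-2+d}<\infty$ is exactly the degeneracy singled out in \S\ref{s: WeissMonotonicity} around \eqref{e:densityquotient}, and in the paper it is a \emph{consequence} of the epiperimetric decay, established only in Theorem \ref{l:blowupsunique}. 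So, as written, your sketch is circular at the key point, even though you correctly flag nondegeneracy as the hard part.

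The repair is the paper's route: run the almost-minimality and error estimates with the $\delta$-loss bounds of Lemma \ref{growthofballsatsingularpoints}, accepting a drop from $\alpha$ to any $\alpha'<\alpha$ (Lemma \ref{l: almostminimizer}, Proposition \ref{p: Wdoesntgrowtoofast}); obtain the lower bound $W_d(r)\geq -Cr^{\alpha'}$ from the identity $W_d=\frac{H}{r^{n-1+2d}}(N-d)$ together with the one-sided almost-monotonicity of $N$ (Theorem \ref{t: growthrateforW}) and Lemma \ref{growthofballsatsingularpoints}, rather than from an assumed $H(r)\simeq r^{\,n-1+2d}$; and normalize the rescalings by $\omega^-(B(Q,r))$ instead of the $L^2$ norm, so that CFMS-type estimates give $\sup_{B_1}|v^{(Q)}_r|\simeq 1$ uniformly (Remark \ref{r:ball-norm}), after which the two-sided density bound and the uniqueness of the (pseudo)blowup both fall out of the $L^2$ Cauchy estimate, as in the proof of Theorem \ref{l:blowupsunique}. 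Two smaller points: your quantitative Attouch--Wets bound $\lesssim r_i^{\mu}$ between $(\partial\Omega-x_i)/r_i$ and $q_{x_i}^{-1}(0)$ does not follow from an $L^2$ bound on the functions alone and is not needed — the paper concludes geometrically by soft compactness (Theorem \ref{t:blowups}) plus uniqueness of the functional limit; and for varying centers you must work with the base-point--dependent family $v^{(Q)}=h(Q)u^+-u^-$ (as your final paragraph implicitly does), since normalizing $h=1$ only at the single point $x$ does not keep the error terms uniform along $x_i$.
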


\begin{theorem}[regularity] \label{t:main} In addition to the hypothesis of Theorem \ref{intro:b}, assume that $\log h\in C^{0,\alpha}(\partial\Omega)$ for some $\alpha>0$. For every $\beta\in(0,\alpha/2)$, the ``singular set" $\partial\Omega\setminus\Gamma_1=\Gamma_2\cup\dots\cup\Gamma_{d_0}$ is contained in a countable union of $C^{1,\beta}$ manifolds of dimension at most $n-3$. \end{theorem}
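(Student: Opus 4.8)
The plan is to deduce Theorem~\ref{t:main} from a quantitative form of the blowup uniqueness in Theorem~\ref{t:unique}, by stratifying the singular set and then running a Whitney extension and implicit function theorem argument, in the spirit of Garofalo and Petrosyan \cite{garofaloandpetrosyan} and of Garofalo, Petrosyan and Smit Vega Garcia \cite{gpsvgepiperimetric}, but carrying along the error terms forced on us by the absence of an underlying energy. First I would refine the partition of Theorem~\ref{intro:b}(ii). For $2\le d\le d_0$ and $x\in\Gamma_d$, let $q_x\colon\RR^n\to\RR$ be the homogeneous harmonic polynomial of degree $d$ supplied by Theorem~\ref{t:unique} (normalized in $L^2(\partial B_1)$, with $\{q_x>0\}$ the blowup of $\Omega^+$), let $V_x=\{\xi\in\RR^n:q_x(\,\cdot\,+\xi)\equiv q_x\}$ be its spine (a linear subspace), and put $k(x)=\dim V_x$. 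Because the bilateral approximation in Theorem~\ref{intro:b}(i) forces both $\{q_x>0\}$ and $\{q_x<0\}$ to be connected, and a homogeneous harmonic polynomial of degree $\ge 2$ whose zero set separates $\RR^n$ into exactly two components has spine of dimension at most $n-3$ (cf.~\cite{BETHarmonicpoly}), we get $k(x)\le n-3$. Setting $\Gamma_d^k=\{x\in\Gamma_d:\dim V_x=k\}$ we obtain the finite decomposition $\partial\Omega\setminus\Gamma_1=\bigsqcup_{2\le d\le d_0}\bigsqcup_{0\le k\le n-3}\Gamma_d^k$, so it suffices to cover each $\Gamma_d^k$ by countably many $k$-dimensional $C^{1,\beta}$ submanifolds.

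The analytic core is a rate of convergence to the blowup. For $x\in\Gamma_d$ one has $W_d(0^+,v_x)=0$ for the almost-harmonic comparison function $v_x$ built from $u^\pm$ and $h$ (since any blowup of $v_x$ at $x$ is $d$-homogeneous harmonic), and, combining the epiperimetric inequality for harmonic functions with the growth bound on $W_d(r,v_x)$ from \S\ref{s:boundW}---which is where $\log h\in C^{0,\alpha}$ enters, through $\Delta v_x$ being of size $\sim r^\alpha$ at scale $r$---the usual differential-inequality manipulation of $r\mapsto W_d(r,v_x)$, carrying these errors and using the frequency function of \S\ref{formulasandcomputations} together with the growth bound of \S\ref{s:boundW} to exclude degeneracy (that is, to ensure the rescalings of $v_x$ at $x$ converge to a \emph{nonzero} $d$-homogeneous harmonic polynomial rather than vanishing in the limit), yields $W_d(r,v_x)\le Cr^{\gamma}$ for $0<r<r_x$, with $\gamma>0$ depending on $\alpha$ and the epiperimetric constant. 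Passing from Weiss energy decay to $L^2$, and then by interior estimates to $C^1$, control gives
\[
\bigl\|\, u^{+}_{x,r}\;-\;q_x \,\bigr\|_{C^1(\overline{B_1})}\;\le\;C\,r^{\,\beta},\qquad 0<r<r_x,
\]
for every $\beta<\alpha/2$ (after reparametrizing the exponent), where $u^{+}_{x,r}$ is the $L^2(\partial B_1)$-normalized rescaling of the Green's function $u^+$ at $x$ and scale $r$; equivalently $(\partial\Omega-x)/r\to q_x^{-1}(0)$ at the Hausdorff rate $r^{\beta}$ in $B_1$. A compactness argument makes these estimates locally uniform on $\Gamma_d$, and comparing them at two nearby points $x,x'\in\Gamma_d$ at a common scale $r\simeq|x-x'|$ gives the local Hölder bound $\|q_x-q_{x'}\|\le C|x-x'|^{\beta}$; since on $\Gamma_d^k$ the spine $V_x$ is the kernel of a linear map depending linearly on $q_x$ and of locally constant rank $n-k$, the projection $x\mapsto\Pi_{V_x}$ is also locally $\beta$-Hölder on $\Gamma_d^k$.

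From here the argument is classical. Fix $x,x'\in\Gamma_d^k$ with $\rho=|x-x'|$ small; splitting $\RR^n=V_x\oplus V_x^{\perp}$ we have $q_x^{-1}(0)=V_x\times\tilde q_x^{-1}(0)$ with $\tilde q_x$ a degree-$d$ harmonic polynomial on $V_x^{\perp}$ having trivial spine. Such a $\tilde q_x$ vanishes to order exactly $d$ only at the origin (else $\tilde q_x(w_0+\,\cdot\,)\equiv\tilde q_x$, putting $w_0$ in its spine), hence to order $<d$ at every other zero; since $x'\in\Gamma_d$ forces the Almgren frequency of $u^+$ at $x'$ to be $\ge d-o(1)$ at all small scales, the point $(x'-x)/\rho$ cannot lie near $q_x^{-1}(0)\setminus V_x$, and the rate above confines it to within $C\rho^{\beta}$ of $V_x$; thus $\dist(x',x+V_x)\le C\rho^{\,1+\beta}$. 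Together with $\|\Pi_{V_x}-\Pi_{V_{x'}}\|\le C|x-x'|^{\beta}$ this says exactly that, after decomposing $\Gamma_d^k$ into countably many pieces on which all constants are uniform, the assignment $x\mapsto(x,\,x+V_x)$ satisfies the hypotheses of the Whitney extension theorem for $C^{1,\beta}$ jets. The resulting map $F\in C^{1,\beta}(\RR^n;\RR^{n-k})$ vanishes on the given piece with $DF(x)=\Pi_{V_x^{\perp}}$, so $F^{-1}(0)$ is, near each of its points, a $k$-dimensional $C^{1,\beta}$ graph; covering it by countably many such graphs and summing over the pieces and over $(d,k)$ completes the proof. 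These last two steps are carried out in \S\S\ref{s: c1betaconvergence}--\ref{s: rectifiability}.

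I expect the main obstacle to be the quantitative uniqueness with a rate. In the obstacle and Signorini problems $W_d(r,u)$ is monotone along minimizers, so the epiperimetric inequality immediately forces geometric decay of $W_d$; here $v_x$ minimizes no energy and $W_d(r,v_x)$ need not be monotone, so this passage must be carried out while absorbing the $\sim r^\alpha$ errors from $\Delta v_x\neq 0$ and simultaneously excluding degenerate (faster-than-$d$-homogeneous, or intermediate-frequency) blowups by means of the frequency function of \S\ref{formulasandcomputations} and the $W_d$ growth bound of \S\ref{s:boundW}. It is precisely the competition between the epiperimetric gain and the almost-harmonicity error that produces the exponent $\alpha/2$ rather than $\alpha$. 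Once this rate and the Hölder dependence $x\mapsto q_x$ are available, the stratification and the Whitney/implicit-function argument are routine.
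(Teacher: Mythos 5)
Your overall architecture---stratification by spine dimension, a H\"older rate of convergence to the blowup coming from the epiperimetric inequality and the growth bound on $W_d$, H\"older dependence of the blowups on the base point, and then a Whitney extension plus implicit function theorem argument---is the same as the paper's (\S\S\ref{s: c1betaconvergence}--\ref{s: rectifiability}). The genuine difference, and the gap, is in how you feed the Whitney extension theorem. You build a \emph{first-order} jet out of the spines, which requires the quantitative estimate $\dist(x',x+V_x)\le C|x-x'|^{1+\beta}$ for nearby points of the same stratum, and you propose to obtain it from the rate $\|u^{+}_{x,r}-q_x\|\le Cr^{\beta}$ together with the fact that the frequency at $x'$ is at least $d$. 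That step does not deliver the claimed exponent: the transverse non-degeneracy of a $d$-homogeneous harmonic polynomial away from its spine is polynomial, not linear. If $z_0\in q_x^{-1}(0)$ lies at distance $\delta$ from $V_x$ and $q_x$ vanishes to order $d'<d$ there, then $\sup_{B(z_0,s)}|q_x|\simeq \delta^{d-d'}s^{d'}$, and comparing this with the bound $\lesssim s^{d}+\rho^{\beta}$ furnished by the vanishing order at $x'$ and the blowup rate only forces $\delta\lesssim\rho^{\beta/d}$ (already for $q=z_1^2-z_2^2$ and $z_0=(t,t,0)$, an $\epsilon$-perturbation is compatible with $t\sim\epsilon^{1/2}$, not $t\sim\epsilon$). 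So your scheme yields $\dist(x',x+V_x)\le C|x-x'|^{1+\beta/d}$, hence $C^{1,\beta/d}$ manifolds, which falls short of the stated range $\beta\in(0,\alpha/2)$ in Theorem \ref{t:main}. (A smaller imprecision in the same step: a $C^1(\overline{B_1})$ rate cannot hold literally for $u^{+}_{x,r}$, which vanishes identically on the exterior phase while $q_x$ does not; what the Weiss decay gives directly is an $L^2(\partial B_1)$ estimate for the jump function, Corollary \ref{p:blowupatholderrate}.)

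The paper avoids this loss by never isolating the spine: it runs the Whitney extension at order $k=d$, taking as jets the full blowup polynomials $\tilde p^{(Q)}(\cdot-Q)$. The compatibility conditions $|D^{\chi}\bigl(\tilde p^{(Q_1)}(\cdot-Q_1)-\tilde p^{(Q_2)}(\cdot-Q_2)\bigr)(Q_1)|\le C|Q_1-Q_2|^{\,d-|\chi|+\beta}$ follow from the moving-center estimate \eqref{blowupschangeholdermovingcenter} applied at the common scale $r\simeq|Q_1-Q_2|$---both polynomials are compared with essentially the same function $v$ on a common ball---combined with interior estimates for the harmonic difference of the two polynomials (Lemma \ref{blowupssatisfywhitney}); the implicit function theorem is then applied to $n-j$ derivatives of order $d-1$ of the resulting $C^{d,\beta}$ extension (Proposition \ref{singularsetinmanifolds}). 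This comparison at a common scale is precisely what encodes the spine information at the correct power $1+\beta$; if you replace your frequency-based confinement step by it (equivalently, by the order-$(d-1)$ Whitney conditions), your first-order formulation also goes through, but as written the proposal does not prove the stated exponent.
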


In addition, we prove that the polynomial blowups in the singular set vary continuously with locally uniform H\"older modulus of continuity; see Corollary \ref{c:blowupschangeholder}.

A key ingredient in the proof of Theorem \ref{t:unique} that we wish to highlight is an epiperimetric inequality for harmonic functions (see Proposition \ref{epiperimetricinequality}). Epiperimetric inequalities were developed by Reifenberg \cite{ReifenbergEpi1}, \cite{ReifenbergEpi2} to establish analyticity of minimal surfaces at almost all points. These inequalities measure how ``isolated" certain critical points are in the space of all homogenous solutions, and lead to a convergence rate for blowups through ``improvement of flatness" or ``$\varepsilon$-regularity" type results. Taylor \cite{TaylorEpi} built on Reifenberg's approach and proved an epiperimetric inequality for area minimizers at singular points, leading to precise description of structure and size of the singular set for area minimizers. For free boundary problems, epiperimetric inequalities at flat points and along the top singular strata in the classical obstacle problem were proved by Weiss \cite{weissepiperimetric}, and in the thin obstacle problem proved independently  by Focardi and Spadaro \cite{FocardiSpadaro} and Garofalo, Petrosyan, and Smit Vega Garcia \cite{gpsvgepiperimetric}.
More recently,  Colombo, Spolaor, and Velichkov \cite{CSVEpi}, \cite{CoSpVe1} gave a constructive approach to prove log-epiperimetric inequalities at all singular points in the classic and thin obstacle problems. For further work in this direction, see Engelstein, Spolaor, and Velichkov \cite{esv1,esv2}. In each of these cases, the free boundary is variational in the sense that it can be represented as the zero set or graph of some function which minimizes an energy.  Our application of an epiperimetric inequality is novel in that we apply it to functions that do not minimize any energy, and thus, it is not clear how an epiperimetric inequality should translate into improved regularity. To wit, even for critical points of an energy functional, it is not clear, and may not be true, that an epiperimetric inequality implies a power rate of blowup.

\begin{remark}\label{r:sharpregularity}
There is an interesting question as to whether or not we can take $\beta = \alpha$ in Theorem \ref{t:main}, as we do in Theorem \ref{intro:c}. The loss of exponent enters the proof in \S\ref{s:boundW} when we apply the epiperimetric inequality to bound the growth of the Weiss type functional and it seems it cannot be avoided; this is not so surprising, as the epiperimetric inequality gives at best $C^{1,1/2-\epsilon}$ regularity of harmonic functions for any $\epsilon > 0$. When dealing with the regular set, as in Theorem \ref{intro:c}, the usual  technique to establish sharp regularity is to use the Hodograph transform and then apply bootstrapping arguments with (weighted) Schauder estimates. Our singular set is of codimension greater than two, so elliptic PDEs in the ambient space do not ``see" the set.
Thus, standard arguments do not apply and a new idea might be needed to address this question.
\end{remark}

\begin{remark}In addition to the hypothesis of Theorem \ref{intro:b}, assume that $\log h\in C^{0,\alpha}(\partial\Omega)$ for some $\alpha>0$. In subsequent work, McCurdy \cite{mccurdy} establishes Minkowski content and Hausdorff measure bounds on the singular set $\partial\Omega\setminus\Gamma_1$ and on the critical sets $S^{\pm} := \{x\in \Omega^{\pm} :|\nabla u^{\pm}(x)| = 0\}$ of the Green's functions of $\Omega^\pm$. McCurdy's result uses the new developments in quantitative stratification by Naber and Valtorta (see e.g.~ \cite{rectifiable-Reifenberg}). \end{remark}

\subsection{Weiss-type monotonicity formula, epiperimetric inequality, and strategy of the proof}\label{s: WeissMonotonicity}

We now recall the Weiss-type monotonicity formula for harmonic functions introduced by Garofalo and Petrosyan \cite{garofaloandpetrosyan} and review some of its basic properties. In doing so, it will become clear what quantities we need to estimate in order to prove our main result. This background might be well known to experts in free boundary problems, but may be less familiar to researchers working on harmonic measure problems.

For any $Q \in \RR^n$, $f\in W_{loc}^{1,2}(\RR^n)$, $r > 0$ and $d\in(0,\infty)$, define

\begin{equation}\label{eqn:weissmono} W_d(r, Q, f) := \frac{1}{r^{n-2+2d}}\int_{B(Q,r)} |\nabla f|^2 dx -\frac{d}{r^{n-1+2d}}\int_{\partial B(Q,r)} f^2d\sigma.\end{equation}
If $f \in C^1(\mathbb R^n)$, then differentiating $W_d$ in $r$ yields
\begin{equation}\begin{split}\label{formulaforwprime}
\frac{d}{dr} W_d(r,Q,f) =&\,\frac{n+2d-2}{r}\left(W_d(r,Q,\overline{f}_{r,Q})- W_d(r, Q, f)\right)\\ &\hbox{\ }\qquad\qquad\qquad + \frac{1}{r^{n-2+2d}}\int_{\partial B_r} \left(\nabla f\cdot \nu - \frac{d}{r}f\right)^2\,d\sigma,
\end{split}\end{equation}
where
\begin{equation}\label{newscaling}\overline{f}_{r,Q}(x) \colonequals \left(\frac{|x|}{r}\right)^df\left(r\frac{x}{|x|} + Q\right)\end{equation}
denotes the $d$-homogenous extension of $f|_{\partial B(Q,r)}$. For a detailed derivation in the case $d = 3/2$ (and with variable coefficients), the reader may consult \cite[Theorem 4.3]{gpsvgepiperimetric}.

\begin{observation}\label{controlthegrowth} Assume that $f:\RR^n\rightarrow\RR$ is harmonic. Then $f$ minimizes the Dirichlet energy in its trace class. Hence $W_d(r, Q, f) \leq W_d(r, Q, \overline{f}_{r,Q})$ and \begin{equation}\label{derivativeofWcontrolsrateofblowup}\frac{d}{dr} W_d(r,Q, f) \geq \frac{1}{r^{n-2+2d}}\int_{\partial B_r} \left(\nabla f\cdot \nu - d\frac{f}{r}\right)^2d\sigma.\end{equation} That is to say, the growth of $W_d(r, Q,f)$ controls how far away $f$ is from its $d$-homogenous extension (as $\nabla f\cdot \nu - (d/r)f \equiv 0$ if and only if $f$ is $d$-homogenous). Moreover, it follows that $W_d(r,Q,f)$ is monotone in $r$, and hence, $W_d(0, Q, f)=\lim_{r\downarrow 0} W_d(0,Q,f)$ exists. Furthermore, if $f$ vanishes to order at least $d$ at $Q$, then $W_d(r, Q, f) \geq 0$ with equality if and only if $f$ is $d$-homogenous in $B(Q,r)$.
\end{observation}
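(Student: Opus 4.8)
The plan is to establish the four assertions one at a time, each reducing to a classical fact. By translating we may assume $Q=0$; write $W_d(r,f)\colonequals W_d(r,0,f)$ and $\overline{f}_r\colonequals\overline{f}_{r,0}$, which by \eqref{newscaling} is the $d$-homogeneous function on $B_r$ agreeing with $f$ on $\partial B_r$. First I would prove the comparison $W_d(r,f)\le W_d(r,\overline{f}_r)$. Since $f$ is harmonic, hence smooth, the homogeneous extension satisfies $|\nabla\overline{f}_r(x)|\le C|x|^{d-1}$; as $n\ge 2$ and $d>0$ give $n+2d-2>0$, this shows $\overline{f}_r\in W^{1,2}(B_r)$, and $\overline{f}_r=f$ on $\partial B_r$, so $\overline{f}_r-f\in W^{1,2}_0(B_r)$. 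Consequently the boundary term $-\tfrac{d}{r^{n-1+2d}}\int_{\partial B_r}(\cdot)^2\,d\sigma$ in \eqref{eqn:weissmono} takes the same value on $f$ and on $\overline{f}_r$, so only the Dirichlet integrals need comparing; and since $f$ is harmonic, $\int_{B_r}\nabla f\cdot\nabla(\overline{f}_r-f)=0$, whence $\int_{B_r}|\nabla\overline{f}_r|^2=\int_{B_r}|\nabla f|^2+\int_{B_r}|\nabla(\overline{f}_r-f)|^2\ge\int_{B_r}|\nabla f|^2$, which is the claim.

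Next I would feed this into \eqref{formulaforwprime}, valid because $f\in C^1(\RR^n)$: since $n+2d-2>0$ and $W_d(r,\overline{f}_r)-W_d(r,f)\ge 0$ by the previous step, the first summand on the right of \eqref{formulaforwprime} is nonnegative, and discarding it yields \eqref{derivativeofWcontrolsrateofblowup}. In particular the right-hand side of \eqref{derivativeofWcontrolsrateofblowup} is $\ge 0$, so $r\mapsto W_d(r,f)$ is nondecreasing and $W_d(0,f)\colonequals\lim_{r\downarrow0}W_d(r,f)$ exists in $[-\infty,\infty)$. For the parenthetical remark, I would note that in polar coordinates $x=\rho\omega$ one has $\nabla f\cdot\nu=\partial_\rho f$, so $\nabla f\cdot\nu-(d/\rho)f$ vanishes on $\bigcup_{0<\rho\le r}\partial B_\rho$ exactly when $\partial_\rho\big(\rho^{-d}f(\rho\omega)\big)\equiv0$, i.e. when $f(\rho\omega)=\rho^d f(\omega)$ is $d$-homogeneous in $B_r$.

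For the final sign-and-rigidity statement I would pass to the expansion $f=\sum_{k\ge0}P_k$ into homogeneous harmonic polynomials $P_k$ of degree $k$, convergent with all derivatives locally uniformly near $0$. Green's identity together with $\Delta P_k=0$ gives $\int_{B_r}\nabla P_j\cdot\nabla P_k=\tfrac{k}{r}\int_{\partial B_r}P_jP_k\,d\sigma$, and on $\partial B_r$ the $P_j$ restrict to rescaled spherical harmonics, which are $L^2$-orthogonal across distinct degrees; a short computation then gives
\[ W_d(r,f)=\sum_{k\ge0}(k-d)\,c_k\,r^{2(k-d)},\qquad c_k\colonequals\int_{\partial B_1}P_k^2\,d\sigma\ \ge\ 0. \]
If $f$ vanishes to order at least $d$ at $0$, then $c_k=0$ for all $k<d$, so every term is nonnegative and $W_d(r,f)\ge0$; equality forces $c_k=0$ for all $k>d$ too, leaving $f=P_d$ if $d\in\mathbb{Z}$ and $f\equiv0$ otherwise, in either case $d$-homogeneous in $B_r$, with $W_d(0,f)=0$. (This identity also re-proves monotonicity, since $\tfrac{d}{dr}W_d(r,f)=\sum_k 2(k-d)^2c_kr^{2(k-d)-1}\ge0$.)

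I do not anticipate a genuine obstacle: this observation assembles standard facts along precisely the lines its statement suggests. The only points needing care are (i) checking $\overline{f}_r\in W^{1,2}(B_r)$ so the Dirichlet comparison is legitimate; (ii) remembering that \eqref{formulaforwprime} is invoked only for the smooth function $f$, never for the merely Lipschitz $\overline{f}_r$; and (iii) reading ``$W_d(0,f)$ exists'' in the extended sense $[-\infty,\infty)$, since from the expansion $W_d(0,f)=-\infty$ exactly when $f$ does not vanish to order $d$ at $0$---the case relevant to this paper being when $f$ vanishes to order $\ge d$, where $W_d(0,f)=0$.
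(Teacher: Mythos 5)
Your proposal is correct and follows essentially the argument the paper intends: harmonicity gives Dirichlet minimality in the trace class, hence $W_d(r,Q,f)\le W_d(r,Q,\overline{f}_{r,Q})$, and discarding the nonnegative first term in \eqref{formulaforwprime} yields \eqref{derivativeofWcontrolsrateofblowup} and monotonicity. Your spherical-harmonics expansion for the final positivity/rigidity clause is the same computation that appears in the paper's appendix (Proposition \ref{epiperimetricinequality-stadedagain}), and your care about $\overline{f}_{r,Q}\in W^{1,2}$ and the extended-value reading of $W_d(0,Q,f)$ is appropriate.
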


To estimate the growth of $W_d(r,Q,f)$ from above, we need an epiperimetric inequality for $d$-homogenous harmonic polynomials. Proposition \ref{epiperimetricinequality}, whose proof we defer to the appendix, is a simple consequence of the eigenvalues of the spherical Laplacian.

\begin{proposition}[an epiperimetric inequality for harmonic functions]\label{epiperimetricinequality}
For every integer $n\geq 2$ and real number $d>0$, there exists $\kappa\in(0,1)$ such that if $u \in W^{1,2}(B(Q,r))$ is homogeneous of degree $d$ about $Q$ and $f$ denotes the harmonic extension of $u|_{\partial B(Q,r)}$ to $B(Q,r)$, then
\begin{equation}\label{eq:epiperimetricinequality} W_d(r,Q,f) \leq (1-\kappa)W_d(r,Q,u).\end{equation}
In fact, when $d$ is an integer we can take $\kappa = 1/(n + 2d-1)$.
\end{proposition}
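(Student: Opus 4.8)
To prove Proposition \ref{epiperimetricinequality}, the natural approach is to reduce everything to spherical harmonics on $\partial B(Q,r)$ and expand the energy in terms of the eigenvalues of the spherical Laplacian. By translation and scaling invariance of the inequality (both sides of \eqref{eq:epiperimetricinequality} scale the same way under $x\mapsto (x-Q)/r$), it suffices to treat $Q=0$ and $r=1$, writing $B=B(0,1)$, $S^{n-1}=\partial B$. First I would recall the basic structure: if $g\in W^{1,2}(S^{n-1})$ has spherical harmonic expansion $g=\sum_{k\ge 0}\phi_k$ with $\phi_k$ the projection onto the eigenspace of $-\Delta_{S^{n-1}}$ with eigenvalue $\lambda_k=k(k+n-2)$, then the $c$-homogeneous extension $x\mapsto |x|^{c}g(x/|x|)$ has Dirichlet energy on $B$ equal to $\sum_k \big(\tfrac{c^2 + \lambda_k}{2c+n-2}\big)\int_{S^{n-1}}\phi_k^2\,d\sigma$ (this is the standard separation-of-variables computation: the radial part contributes $\int_0^1 (c\,\rho^{c-1})^2 \rho^{n-1}\,d\rho = c^2/(2c+n-2)$ and the tangential part contributes $\lambda_k\int_0^1 \rho^{2c}\rho^{n-3}\,d\rho = \lambda_k/(2c+n-2)$). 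The harmonic extension of $g$ is obtained by taking, in each mode $\phi_k$, the $k$-homogeneous (rather than $d$-homogeneous) extension, since the genuinely harmonic extension of a degree-$k$ spherical harmonic is the solid harmonic $|x|^k\phi_k(x/|x|)$.

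**Key computation.** With $u$ homogeneous of degree $d$ about $0$ and boundary trace $g=u|_{S^{n-1}}=\sum_k\phi_k$, one computes directly from \eqref{eqn:weissmono}:
\begin{align*}
W_d(1,0,u) &= \int_B |\nabla u|^2 - d\int_{S^{n-1}} g^2 \,d\sigma = \sum_k \Big(\tfrac{d^2+\lambda_k}{2d+n-2} - d\Big)\int_{S^{n-1}}\phi_k^2\,d\sigma = \sum_k \tfrac{\lambda_k - d(d+n-2)}{2d+n-2}\int_{S^{n-1}}\phi_k^2\,d\sigma,
\end{align*}
and similarly, since the harmonic extension $f$ restricts to $g$ on $S^{n-1}$ and has energy $\sum_k\tfrac{k^2+\lambda_k}{2k+n-2}\int\phi_k^2 = \sum_k \tfrac{\lambda_k}{k+(n-2)/2}\cdot\tfrac12 \cdot \ldots$; more cleanly, using $k^2+\lambda_k = k^2 + k(k+n-2) = k(2k+n-2)$ so the $k$-th energy coefficient is exactly $k$, one gets $\int_B|\nabla f|^2 = \sum_k k\int_{S^{n-1}}\phi_k^2\,d\sigma$ and hence
\[
W_d(1,0,f) = \sum_k (k-d)\int_{S^{n-1}}\phi_k^2\,d\sigma.
\]
Writing $a_k := \int_{S^{n-1}}\phi_k^2\,d\sigma \ge 0$ and $\mu_k := \tfrac{\lambda_k - d(d+n-2)}{2d+n-2} = \tfrac{k(k+n-2)-d(d+n-2)}{2d+n-2}$, the desired inequality $W_d(1,0,f)\le (1-\kappa)W_d(1,0,u)$ becomes $\sum_k (k-d)a_k \le (1-\kappa)\sum_k \mu_k a_k$, i.e.\ $\sum_k \big[(1-\kappa)\mu_k - (k-d)\big] a_k \ge 0$. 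It therefore suffices to find $\kappa\in(0,1)$ so that $(1-\kappa)\mu_k \ge k-d$ for every $k\ge 0$; since $a_k\ge 0$ this is enough. Note $\mu_k=0$ precisely when $k(k+n-2)=d(d+n-2)$, which for $d$ a nonnegative integer happens exactly at $k=d$, where also $k-d=0$, so the constraint is vacuous there; for $k\ne d$ one checks $\mu_k$ and $k-d$ have the same sign, so the ratio $(k-d)/\mu_k$ is positive and one needs $1-\kappa \ge \sup_{k\ne d}(k-d)/\mu_k$.

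**Finishing and the main obstacle.** The remaining task is to bound $\sup_{k\ne d}(k-d)/\mu_k$ strictly below $1$. A short manipulation gives
\[
\frac{k-d}{\mu_k} = \frac{(k-d)(2d+n-2)}{k(k+n-2)-d(d+n-2)} = \frac{(k-d)(2d+n-2)}{(k-d)(k+d+n-2)} = \frac{2d+n-2}{k+d+n-2},
\]
valid whenever $k\ne d$ (the factor $k-d$ cancels cleanly because $k(k+n-2)-d(d+n-2)=(k-d)(k+d+n-2)$). This ratio is largest when $k$ is smallest subject to $k\ne d$ and $\mu_k\ne 0$; for integer $d\ge 1$ the relevant competitor is $k=0$ (which is allowed since if $d\ge 1$ then $0\ne d$ and $\mu_0 = -d(d+n-2)/(2d+n-2)<0$ while $0-d<0$), giving $(k-d)/\mu_k = (2d+n-2)/(d+n-2)$ — wait, that exceeds $1$, so one must instead observe that for $k<d$ both numerator and denominator of $(k-d)/\mu_k$ before cancellation are negative and we need the cancelled form: $(2d+n-2)/(k+d+n-2)$ with $k\ge 0$ gives worst case $k=0$, value $(2d+n-2)/(d+n-2) > 1$. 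The resolution is that one does \emph{not} need the inequality mode-by-mode after all for $k<d$; rather, since $W_d(1,0,u)$ can be negative, the correct statement only needs to hold under the standing hypothesis context, OR — more likely what the authors do — the epiperimetric inequality is applied with the additional knowledge that $u$ vanishes to order $d$, so that $a_k=0$ for $k<d$ and only $k>d$ contributes; then $\sup_{k>d}(2d+n-2)/(k+d+n-2)$ is attained at $k=d+1$, yielding $(2d+n-2)/(2d+n-1)=1-1/(2d+n-1)$, exactly the claimed $\kappa=1/(n+2d-1)$. I would therefore structure the proof by first noting that homogeneity of degree $d$ forces, in the relevant application, only frequencies $k\ge d$ to appear — or handle the general $W^{1,2}$ case by separating the finitely many modes $k<d$ and absorbing them, which is where the non-integer $d$ case needs the softer ``there exists $\kappa$'' statement via a compactness/direct argument on the finitely many problematic frequencies. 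The main obstacle is precisely this bookkeeping at low frequencies $k<d$: getting a clean uniform $\kappa$ requires either restricting attention to traces of functions vanishing to the right order (giving the sharp $\kappa=1/(n+2d-1)$) or, for general non-integer $d$, arguing that only finitely many modes can violate the naive bound and handling them by an elementary estimate, which still yields \emph{some} $\kappa\in(0,1)$ but not the explicit one.
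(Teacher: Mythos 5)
Your spherical-harmonics reduction and the two energy computations are exactly right, and they match the paper's proof: $W_d(1,0,f)=\sum_k (k-d)a_k$ and $W_d(1,0,u)=\sum_k \mu_k a_k$ with $\mu_k=\frac{(k-d)(k+d+n-2)}{2d+n-2}$, so it suffices to check the mode-by-mode inequality $(k-d)\le (1-\kappa)\mu_k$ for all integers $k\ge 0$. The gap is in your sign analysis of the low modes, and the ``main obstacle'' you then build the rest of the argument around is spurious. For $k<d$ you have $\mu_k<0$ and $k-d<0$; the condition $(k-d)\le(1-\kappa)\mu_k$ is \emph{not} equivalent to $1-\kappa\ge (k-d)/\mu_k$, because dividing by the negative quantity $\mu_k$ (or by $k-d$) reverses the inequality. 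Dividing by $k-d<0$ gives the requirement $1\ge(1-\kappa)\frac{k+d+n-2}{2d+n-2}$, which holds automatically for every $\kappa\ge 0$ since $k<d$ makes the fraction strictly less than $1$. Equivalently: $(1-\kappa)\mu_k=(k-d)\cdot c$ with $c\in(0,1)$, and multiplying a negative number by $c\in(0,1)$ increases it, so the mode-wise inequality is strict for $k<d$. Hence there is no need to assume $u$ vanishes to order $d$, no need to discard or ``absorb'' the modes $k<d$, and no compactness argument for non-integer $d$; your proposed fixes either add a hypothesis not present in the statement or leave the argument incomplete, so as written the proposal does not prove the proposition.

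Once the sign issue is corrected, the binding constraint comes only from $k>d$, i.e.\ $k\ge\lfloor d\rfloor+1$, where one needs $1-\kappa\ge\frac{2d+n-2}{k+d+n-2}$, worst at $k=\lfloor d\rfloor+1$. This yields the explicit constant $\kappa=\frac{1+\lfloor d\rfloor-d}{n+\lfloor d\rfloor+d-1}\in(0,1)$ for every real $d>0$, which reduces to $\kappa=1/(n+2d-1)$ when $d$ is an integer --- exactly the paper's conclusion, reached by precisely the mode-by-mode estimate you set up before abandoning it at the low frequencies.
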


Using the epiperimetric inequality \eqref{eq:epiperimetricinequality}, one can show that for all harmonic functions $f$ that vanish to order at least $d$ at $Q$ and for all scales $0<t/2<s<t\leq 1$,
\begin{equation}\label{eqn-tt1}
\left(\int_{\partial B_1(0)}\left| \frac{f(tx + Q)}{t^d} - \frac{f(sx+Q)}{s^d}\right|^2\ d\sigma\right)^{1/2} \leq t^{\gamma/2}\sqrt{W(1, Q, f)},\quad
\end{equation} where  $\gamma :=(n+2d-2)\kappa/(1-\kappa)$. Thus, the blowups  $r^{-d}f(rx+Q)$ of $f$ converge at a power rate as $r \downarrow 0$ to a unique function $f^{(Q)}$, uniformly across $Q$ in any compact set, and the functions $f^{(Q)}$ vary H\"older continuously in $Q$. The arguments in \S\ref{s: c1betaconvergence} yield \eqref{eqn-tt1} as a special case.

Our basic strategy for the two-phase problem with H\"older data is to replicate the argument sketched above. For each $Q\in \partial \Omega$, we consider the \emph{jump function} \begin{equation}\label{e:defofvq} v^{(Q)}(x) := h(Q)u^+(x) -  u^-(x),\end{equation} where $u^\pm$ are Green's functions of $\Omega^{\pm}$ (see \S\ref{s: notation}). We want to study the behavior of $W_d(r, Q, v^{(Q)})$ as $r\downarrow 0$. Unfortunately, $v^{(Q)}$ is not harmonic unless $\omega^+=\omega^-$. Therefore, unlike the situation above, one cannot expect $r \mapsto W_d(r, Q, v^{(Q)})$ to be monotone in $r$. Nevertheless, we observe that \begin{equation}\label{distributionderivative}
\Delta v^{(Q)}(x) = (h(Q)d\omega^+-d\omega^-)|_{\partial \Omega} = \left(\frac{h(Q)}{h(x)} - 1\right)d\omega^-|_{\partial \Omega}
\end{equation} in the sense of distributions. Thus, $\log(h) \in C^{0,\alpha}(\partial \Omega)$ implies \begin{equation}\label{e:boundondistderivative}|\Delta v^{(Q)}(x)| \leq C|x-Q|^\alpha d\omega^-|_{\partial \Omega}.\end{equation} That is, $v^{(Q)}$ is ``almost harmonic", and $W_d(r,Q,v^{(Q)})$ is ``almost monotone".
The estimate \eqref{e:boundondistderivative} allows us to prove that $v^{(Q)}$ ``almost minimizes" the Dirichlet energy in balls centered at $Q\in \partial \Omega$. In conjunction with Proposition \ref{epiperimetricinequality}, we are able to conclude H\"older continuity of the blowup in essentially the way outline above modulo the issue of degeneracy (the ``almosts" create extra, mostly technical, difficulties).

The issue of degeneracy occurs when we need to control the error caused by the fact that $\Delta v^{(Q)}(B(Q,r)) \neq 0$ (where the Laplacian can be evaluated formally and treated as a measure). The most problematic terms are proportional to \begin{equation}\label{e:densityquotient} \frac{\omega^\pm(B(Q,r))}{r^{n-2+d}}\quad\text{for }Q \in \Gamma_d, r > 0.\end{equation} The major difficulty is ruling out the possibility that this quotient tends to zero or infinity as $r\downarrow 0$ (this is the degeneracy alluded to above). Using structural results from \cite{BETHarmonicpoly} and an argument inspired by \cite{kenigtoroduke}, we show that while \emph{a priori} the quotient in \eqref{e:densityquotient} may blowup or degenerate as $r\downarrow 0$, it cannot do so faster than any power of $r$; for a precise statement, see Lemma \ref{growthofomega}. Furthermore, this result holds uniformly over compact subsets of $\Gamma_d$. This is sufficient control to establish almost-monotonicity and bound the growth of $W_d(r, Q, v^{(Q)})$. Once we have these bounds, we can obtain  the existence, positivity, and finiteness of $\lim_{r\downarrow 0} r^{-(n-2+d)}\omega^-(B(Q,r))$ using an argument inspired by \cite{gpsvgepiperimetric}; see Theorem \ref{l:blowupsunique}. That this limit exists is an essential step in gaining geometric information from the blowup process outlined in \S\ref{s: notation}.

\subsection{Plan of the paper}  The rest of the paper naturally splits into three parts.

In the first part, \S\S \ref{s: notation} and \ref{sec:start}, we introduce essential notation and ideas from geometric measure theory and the theory of harmonic measure. In particular, we recall Kenig and Toro's blowup analysis for the two-phase problem for harmonic measure \cite{kenigtorotwophase}. Combined with results of \cite{BETHarmonicpoly}, this allows us to prove that the quotient in \eqref{e:densityquotient} does not go to zero or infinity faster than any power of $r$; see Lemma \ref{growthofomega}. The start of the proof of the main theorems is given in \S\ref{sec:start}.

In the second part, \S\S \ref{formulasandcomputations} and \ref{s:boundW}, we control the growth of the Weiss monotonicity formula applied to the function $v^{(Q)}$ (see \eqref{e:defofvq}). Because of the possibility of degeneracy, we must actually first work with the Almgren frequency formula. This effort culminates in a H\"older growth rate of the function $r\mapsto W_d(r, Q, v^{(Q)})$; see Proposition \ref{p: Wdoesntgrowtoofast}.

In the third and final part, \S\S \ref{s: c1betaconvergence} and \ref{s: rectifiability}, we translate our bounds on the growth of $W_d(r, Q, v^{(Q)})$ to geometric information about the singular set. In \S\ref{s: c1betaconvergence}, we show uniqueness of blowups (Theorem \ref{l:blowupsunique}) and their H\"older continuous dependence on the point $Q$. Then, in \S\ref{s: rectifiability}, we obtain regularity of the singular set using a classical argument based on the implicit function theorem.

\vspace{-.2 cm}

\subsection*{Acknowledgements} The authors would like to thank the two anonymous referees whose careful reading and thoughtful comments led to improvements in the manuscript. M.~Engelstein would like to thank L.~Spolaor and B.~Velichkov for many enlightening conversations about epiperimetric inequalities. Part of this research and work on this manuscript was carried out while the three authors attended the long programs on Harmonic Analysis at MSRI in Spring 2017 and at PCMI in Summer 2018.

\section{Blowups of harmonic measure on NTA domains}\label{s: notation}

In this section, we first review terminology appearing in Theorem \ref{intro:b}, including Jerison and Kenig's class of non-tangentially accessible domains, Kenig and Toro's blowup analysis of harmonic measure, and Badger and Lewis' framework for local set approximation. For full details, readers are referred to \cite{jerisonandkenig}, \cite{kenigtoroannals}, \cite{kenigtorotwophase}, \cite{localsetapproximation}, and the references therein. Towards the end of this section, we use results from our previous work \cite{badgerharmonicmeasure}, \cite{BETHarmonicpoly} to compute the local dimension of harmonic measures $\omega^\pm$ at points $x\in\Gamma_d$, where blowups of the boundary $\partial\Omega^\pm$ are zero sets of homogeneous harmonic polynomials of degree $d$; see Lemma \ref{growthofballsatsingularpoints}

\begin{definition}[\cite{jerisonandkenig}] \label{ntadomains}
A domain (i.e.~a connected, open set) $\Omega \subset \mathbb R^n$ is called \emph{NTA} or \emph{non-tangentially accessible} if there exist constants $M_\Omega>1$ and $R_\Omega > 0$ such that the following hold:
\begin{enumerate}
\item $\Omega$ satisfies the \emph{corkscrew condition}: for all $Q \in \partial \Omega$ and $0 < r < R_\Omega$, there exists $x\in \Omega\cap B(Q,r)$ such that $\dist(x,\partial\Omega)>M_\Omega^{-1}r$.
\item $\RR^n\setminus\Omega$ satisfies the corkscrew condition.
\item $\Omega$ satisfies the \emph{Harnack chain condition}: If $x_1, x_2\in \Omega\cap B(Q,r/4)$ for some $Q\in \partial \Omega$ and $0<r<R_\Omega$, and $\dist(x_1,\partial\Omega)>\delta$, $\dist(x_2,\partial\Omega)>\delta$, and $|x_1-x_2|<2^l\delta$ for some $\delta>0$ and $l\geq 1$, then there exists a chain of no more than $Ml$ overlapping balls  connecting $x_1$ to $x_2$ in $\Omega$ such that for each ball $B=B(x,s)$ in the chain: \begin{align*}&M_\Omega^{-1}s<\gap(B,\partial\Omega)<M_\Omega s, &&\gap(B,\partial\Omega)=\inf_{x\in B}\inf_{y\in\partial\Omega}|x-y|, \\ \diam B> &M_\Omega^{-1}\min\{\dist(x_1,\partial\Omega),\dist(x_2,\partial\Omega)\}, &&\diam B=\sup_{x,y\in B}|x-y|.\end{align*}
\end{enumerate} We refer to $M_\Omega$ and $R_\Omega$ as \emph{NTA constants} of the domain $\Omega$. When $\partial\Omega$ is unbounded, $R_\Omega=\infty$ is allowed. To distinguish between (i) and (ii), the former may be called the \emph{interior corkscrew condition} and the latter may be called the \emph{exterior corkscrew condition}.
\end{definition}

The exterior corkscrew condition guarantees that NTA domains are regular for the classical Dirichlet problem for harmonic functions (i.e.~ continuous solutions exist for all continuous boundary data), and therefore, the harmonic measures of an NTA domain exist by the Perron-Brelot-Weiner method and Riesz representation theorem. For background on the relevant potential theory, see e.g. Helms \cite{helms}.

\begin{proposition} Let $\Omega\subset\RR^n$ be a Wiener regular domain (such as an NTA domain). When $\Omega$ is bounded, there exists a unique family of Borel regular probability measures $\{\omega^X\}_{X\in\Omega}$ on $\partial\Omega$ such that $$X\mapsto \int_{\partial\Omega} f\,d\omega^X$$ solves the Dirichlet problem with boundary data $f\in C(\partial\Omega)$. When $\Omega$ is unbounded, there exists a unique family of Borel regular probability measures $\{\omega^X\}$ on $\partial_\infty\Omega=\partial\Omega\cup\{\infty\}$ such that $$X\mapsto \int_{\partial_\infty\Omega} f\,d\omega^X$$ solves the Dirichlet problem with boundary data $f\in C(\partial_\infty\Omega)$. The measure $\omega^X$ is called the \emph{harmonic measure of $\Omega$ with pole at $X$}.\end{proposition}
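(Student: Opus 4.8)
\emph{Proof strategy.} This is the classical construction of harmonic measure via the Perron--Wiener--Brelot method together with the Riesz representation theorem, and the plan is to carry it out first for bounded $\Omega$ and then reduce the unbounded case to it by inversion. For $f\in C(\partial\Omega)$, let $H_f$ denote the Perron solution of the Dirichlet problem with data $f$, i.e.\ the harmonic function on $\Omega$ which, by Wiener's resolutivity theorem for bounded domains, is the common value of the upper and lower Perron classes of $f$. Since $\Omega$ is Wiener regular, $H_f$ extends continuously to $\overline\Omega$ with $H_f|_{\partial\Omega}=f$, so $H_f$ genuinely solves the Dirichlet problem. One records the standard properties: $f\mapsto H_f$ is linear, is positive ($f\ge 0\Rightarrow H_f\ge 0$), is normalized ($H_1\equiv 1$, as constants are harmonic), and satisfies $\|H_f\|_{L^\infty(\Omega)}\le\|f\|_{C(\partial\Omega)}$ by the maximum principle. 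Now fix $X\in\Omega$: then $\Lambda_X(f):=H_f(X)$ is a positive linear functional on $C(\partial\Omega)$ with $\Lambda_X(1)=1$, and since $\partial\Omega$ is compact the Riesz representation theorem furnishes a unique Borel regular (Radon) measure $\omega^X$ on $\partial\Omega$ with $H_f(X)=\int_{\partial\Omega}f\,d\omega^X$ for every $f\in C(\partial\Omega)$; positivity of $\Lambda_X$ forces $\omega^X\ge 0$, and $\omega^X(\partial\Omega)=\Lambda_X(1)=1$, so $\omega^X$ is a probability measure. Uniqueness of the family is then immediate: if $\{\mu^X\}_{X\in\Omega}$ is another admissible family, then for each $f\in C(\partial\Omega)$ the harmonic function $X\mapsto\int f\,d\mu^X$ solves the same Dirichlet problem as $H_f$, hence equals $H_f$ by uniqueness of solutions on the Wiener regular domain $\Omega$ (a consequence of the maximum principle), whence $\int f\,d\mu^X=\int f\,d\omega^X$ for all $f\in C(\partial\Omega)$ and therefore $\mu^X=\omega^X$ by the uniqueness clause of the Riesz theorem.

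For the unbounded case one repeats the argument with $\partial\Omega$ replaced by the compact set $\partial_\infty\Omega=\partial\Omega\cup\{\infty\}$, using the Perron--Wiener--Brelot theory for Greenian domains that meet the point at infinity: each $f\in C(\partial_\infty\Omega)$ has a bounded Perron solution $H_f$, harmonic on $\Omega$, linear and positive in $f$, continuous on $\overline\Omega\cup\{\infty\}$ with boundary values $f$ (continuity at $\infty$ being exactly the part of ``$\Omega$ Wiener regular'' that refers to $\infty$), and with $H_1\equiv 1$; the Riesz theorem on $\partial_\infty\Omega$ then yields the unique family $\{\omega^X\}$, with $\omega^X(\partial_\infty\Omega)=H_1(X)=1$. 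That this theory is available is seen by inversion: fixing $x_0\in\RR^n\setminus\overline\Omega$ (nonempty since $\Omega\ne\RR^n$) and setting $\Phi(x)=x_0+(x-x_0)/|x-x_0|^2$, the image $\Omega^\ast:=\Phi(\Omega)$ is bounded, $\Phi$ carries $\partial_\infty\Omega$ homeomorphically onto $\partial\Omega^\ast=\Phi(\partial\Omega)\cup\{x_0\}$ sending $\infty$ to $x_0$, and the Kelvin transform $u\mapsto|x-x_0|^{2-n}\,u\circ\Phi$ (harmonicity preserved up to the positive conformal factor for $n\ge 3$, preserved outright for $n=2$) matches up harmonic functions, superharmonic functions, positivity, and barriers on the two domains; transporting the bounded case through $\Phi$ then gives the unbounded case, uniqueness included.

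\emph{Expected main difficulty.} None of this is deep --- it is all contained in standard treatments of potential theory (e.g.~Helms \cite{helms}) --- so the only point requiring genuine care is the bookkeeping around the point at infinity: one must verify that ``regularity at $\infty$'' for $\Omega$ corresponds correctly to regularity of $\Omega^\ast$ at $x_0$, and that the Kelvin conformal factor, which degenerates at $x_0$ when $n\ge 3$, does not spoil either the probability normalization or the continuous attainment of the boundary value $f(\infty)$ at $\infty$. For the NTA domains of interest one can in any case bypass the abstract Wiener criterion: the exterior corkscrew condition (at all scales, when $\partial\Omega$ is unbounded) makes the complement of $\Omega$ have uniformly positive Lebesgue density at every point of $\partial_\infty\Omega$, so the Wiener criterion holds trivially and $\Omega$ is Wiener regular.
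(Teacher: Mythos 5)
The paper offers no proof of this proposition: it is stated as standard background, with the surrounding text attributing existence to the Perron--Wiener--Brelot method plus the Riesz representation theorem and pointing to Helms. Your bounded-case argument is exactly that route and is correct: resolutivity of continuous data, positivity and normalization of $f\mapsto H_f(X)$, Riesz representation on the compact set $\partial\Omega$, and uniqueness via the maximum principle together with the uniqueness clause of Riesz. Your first-line treatment of the unbounded case --- run the PWB theory directly on the compact boundary $\partial_\infty\Omega$ and apply Riesz there --- is also the standard (Helms) treatment and is the right way to do it.

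The weak point is the claim that the unbounded case is obtained by ``transporting the bounded case through $\Phi$'' via the Kelvin transform, ``uniqueness included.'' For $n\ge 3$ this reduction does not go through as stated, for two concrete reasons. First, $x_0=\Phi(\infty)$ is a polar boundary point of $\Omega^\ast$ (an isolated one when $\partial\Omega$ is bounded, e.g.\ for the exterior of a ball), hence an irregular boundary point of $\Omega^\ast$; so $\Omega^\ast$ is not Wiener regular at $x_0$ and the bounded-case proposition cannot be invoked there, even though $\infty$ is a perfectly good point of $\partial_\infty\Omega$ at which solutions do attain their data (for the exterior of the unit ball, $u=f(\infty)\bigl(1-|x|^{2-n}\bigr)+v$ works). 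In other words, ``regularity at $\infty$'' for $\Omega$ does \emph{not} correspond to regularity of $\Omega^\ast$ at $x_0$. Second, the conformal factor is not mere bookkeeping: a bounded harmonic function on $\Omega$ with limit $c\neq 0$ at $\infty$ has Kelvin transform behaving like $c\,|y-x_0|^{2-n}$ near $x_0$, so boundary data at $\infty$ is not carried to boundary data at $x_0$, and conversely bounded harmonic functions on $\Omega^\ast$ pull back to functions vanishing at $\infty$. Relatedly, $\{x_0\}$ has harmonic measure zero for $\Omega^\ast$, whereas $\{\infty\}$ can carry positive harmonic measure for $\Omega$ (again the exterior of a ball: $\omega^X(\{\infty\})=1-|X|^{2-n}>0$), so no naive pushforward of measures can be correct. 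The fix is simply to drop the inversion shortcut and carry out the PWB construction on $\partial_\infty\Omega$ directly, treating $\infty$ as an ordinary point of the compactified boundary (with the corkscrew/Wiener condition at large scales giving a barrier at $\infty$ in the NTA setting); with that, your argument is complete and matches the construction the paper cites.
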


The next lemma states that harmonic measures on NTA domains are locally doubling, a form of weak regularity of a measure.

\begin{lemma}[{\cite[Lemmas 4.9 and 4.11]{jerisonandkenig}}] Let $\Omega\subset\RR^n$ be an NTA domain, and let $K\subset\partial\Omega$ be a compact set. There exists $C>1$ depending only on the NTA constants of $\Omega$ and on $K$ such that for all $Q\in K$, $0<2r<R_\Omega$, and $X\in\Omega\setminus B(Q,2M_\Omega r)$, $$\omega^X(\partial\Omega\cap B(Q,2s))\leq C \omega^X(\partial\Omega\cap B(Q,s))\quad\text{for all }0<s\leq r.$$\end{lemma}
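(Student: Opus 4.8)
The plan is to run the classical argument of Jerison and Kenig \cite{jerisonandkenig} (building on Caffarelli--Fabes--Mortola--Salsa): compare the harmonic measure of a surface ball with the Green's function $G$ of $\Omega$ evaluated at an interior corkscrew point, and then move between comparable scales using Harnack's inequality along Harnack chains. Throughout, for $Q\in\partial\Omega$ and $0<t<R_\Omega$ let $A_t(Q)\in\Omega\cap B(Q,t)$ denote an interior corkscrew point, so that $\dist(A_t(Q),\partial\Omega)>M_\Omega^{-1}t$; such a point exists by the interior corkscrew condition (Definition \ref{ntadomains}(i)). All constants produced below depend only on $n$ and $M_\Omega$; the dependence on the compact set $K$ in the statement is needed only if one localizes the argument through a finite cover of $K$ by coordinate balls (relevant when $\partial\Omega$ is unbounded), and may otherwise be ignored.

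First I would assemble the two standard inputs. \emph{(a) Bourgain's nondegeneracy estimate:} there is $c_0\in(0,1)$ with $\omega^Y(\partial\Omega\cap B(Q,2t))\ge c_0$ for every $Y\in\Omega\cap B(Q,t)$ whenever $0<2t<R_\Omega$. To prove it, use the \emph{exterior} corkscrew condition (Definition \ref{ntadomains}(ii)) to place a ball $B'$ of radius $\sim t$ inside $\RR^n\setminus\Omega$ near $Q$; since $\Omega\cap B(Q,2t)\subseteq B(Q,2t)\setminus B'$, the maximum principle gives $\omega^Y(\partial\Omega\cap B(Q,2t))\ge 1-\omega^Y_{B(Q,2t)\setminus B'}(\partial B(Q,2t))\ge c_0$, the last inequality because Brownian motion from $Y$ hits the obstacle $B'$ (of Newtonian capacity $\gtrsim t^{n-2}$) before leaving $B(Q,2t)$ with probability bounded below. \emph{(b) Green's-function comparison:} there is $C_1>1$ with
\[
C_1^{-1}\,t^{n-2}G(X,A_t(Q))\ \le\ \omega^X(\partial\Omega\cap B(Q,t))\ \le\ C_1\,t^{n-2}G(X,A_t(Q))
\]
whenever $0<2t<R_\Omega$ and $X\in\Omega\setminus B(Q,M_\Omega t)$. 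Here I would argue by the maximum principle: both $Y\mapsto\omega^Y(\partial\Omega\cap B(Q,t))$ and $Y\mapsto t^{n-2}G(Y,A_t(Q))$ are positive, harmonic on $\Omega\setminus\overline{B(Q,Ct)}$, and vanish on $\partial\Omega\setminus\overline{B(Q,Ct)}$; using (a) together with interior Harnack to bound $G(\cdot,A_t(Q))$ from below on the interface sphere $\Omega\cap\partial B(Q,Ct)$, and a barrier/capacity estimate to bound it from above there, the two functions are comparable on that sphere, and the maximum principle propagates the comparison to $\Omega\setminus\overline{B(Q,Ct)}$. It is essential that the interface bounds be obtained through barriers rather than through the boundary Harnack principle, since the latter is deduced \emph{from} doubling and using it here would be circular.

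Granting (a) and (b), the lemma follows quickly. Fix $Q\in K$, $0<2r<R_\Omega$, $X\in\Omega\setminus B(Q,2M_\Omega r)$, and $0<s\le r$. Then $|X-Q|\ge 2M_\Omega r\ge M_\Omega\cdot 2s$, so (b) applies at scales $s$ and $2s$ and yields
\[
\omega^X(\partial\Omega\cap B(Q,2s))\ \le\ C_1(2s)^{n-2}G(X,A_{2s}(Q))\ =\ 2^{n-2}C_1\,s^{n-2}G(X,A_{2s}(Q)).
\]
The points $A_s(Q)$ and $A_{2s}(Q)$ lie in $\Omega\cap B(Q,2s)$ at distance $\ge M_\Omega^{-1}s$ from $\partial\Omega$ and satisfy $|A_s(Q)-A_{2s}(Q)|<3s$, so by the Harnack chain condition (Definition \ref{ntadomains}(iii)) they are joined by a chain of at most $N(M_\Omega)$ overlapping balls contained in $\Omega$, each at distance comparable to $s$ from $\partial\Omega$ and lying in $B(Q,C M_\Omega s)$; since $|X-Q|\ge 2M_\Omega r$ this chain avoids $X$ (this is what the extra factor $2$ in the hypothesis $X\notin B(Q,2M_\Omega r)$ affords). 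Applying interior Harnack's inequality to the positive harmonic function $G(X,\cdot)$ along this chain gives $G(X,A_{2s}(Q))\le C_2\,G(X,A_s(Q))$, and substituting this into (b) at scale $s$ produces $\omega^X(\partial\Omega\cap B(Q,2s))\le C\,\omega^X(\partial\Omega\cap B(Q,s))$ with $C=2^{n-2}C_1^2C_2$, depending only on $n$ and $M_\Omega$.

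I expect the main obstacle to be ingredient (b), the analytic core of the argument: within it, the delicate point is the two-sided control of $G(\cdot,A_t(Q))$ on the interface sphere $\Omega\cap\partial B(Q,Ct)$ by barrier and capacity estimates resting on the exterior corkscrew condition, arranged so as not to invoke the boundary Harnack principle. Bourgain's estimate (a) is the other place where the exterior corkscrew condition does real work; by contrast, the passage from (a) and (b) to the doubling inequality is the short, routine Harnack-chain computation carried out above.
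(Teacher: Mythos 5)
The paper gives no argument for this lemma at all: it is quoted verbatim from Jerison--Kenig (their Lemmas 4.9 and 4.11), so your attempt has to be measured against the classical proof there. Your skeleton is exactly that proof: Bourgain's nondegeneracy estimate, a CFMS-type comparison $\omega^X(\partial\Omega\cap B(Q,t))\simeq t^{n-2}G(X,A_t(Q))$, and a Harnack chain between the corkscrew points $A_s(Q)$ and $A_{2s}(Q)$ avoiding the pole $X$. Ingredient (a), the easy direction of (b) (bounding $G$ by $\omega$ via the maximum principle on the complement of a small ball about the pole, where Bourgain applies), and the final assembly are all correct modulo routine radius and constant chasing.

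The genuine gap is inside ingredient (b), in the direction the doubling argument actually needs at scale $2s$, namely $\omega^X(\partial\Omega\cap B(Q,t))\le C\,t^{n-2}G(X,A_t(Q))$. Your plan is to establish pointwise comparability of $Y\mapsto\omega^Y(\partial\Omega\cap B(Q,t))$ and $Y\mapsto t^{n-2}G(Y,A_t(Q))$ on the interface sphere $\Omega\cap\partial B(Q,Ct)$, using Bourgain plus interior Harnack for a lower bound on $G$ and barriers for upper bounds, and then propagate by the maximum principle. But the interface sphere meets $\partial\Omega$, and near $\partial\Omega\cap\partial B(Q,Ct)$ both functions tend to zero: interior Harnack gives no uniform lower bound for $G(\cdot,A_t(Q))$ there (the chains to such points are unboundedly long), and the one-sided estimates available without boundary Harnack --- an upper bound $\omega^Y(\partial\Omega\cap B(Q,t))\lesssim(\dist(Y,\partial\Omega)/t)^{\beta}$ from the barrier/capacity-density argument, and a lower bound $t^{n-2}G(Y,A_t(Q))\gtrsim(\dist(Y,\partial\Omega)/t)^{\gamma}$ from Harnack chains --- carry exponents with no relation forcing $\beta\ge\gamma$, so they cannot be combined into the claimed comparison. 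Asserting comparability of two positive harmonic functions vanishing on the same boundary portion is, at those near-boundary points, precisely a boundary Harnack statement, which you rightly rule out as circular; but then your sketch contains no valid substitute. The classical, non-circular resolution is different: take a bump $\varphi$ equal to $1$ on $B(Q,t)$, supported in $B(Q,2t)$, with $|\Delta\varphi|\lesssim t^{-2}$; since $X\notin B(Q,2t)$, the Riesz representation gives $\omega^X(\partial\Omega\cap B(Q,t))\le\int\varphi\,d\omega^X\lesssim t^{-2}\int_{\Omega\cap B(Q,2t)}G(X,Y)\,dY$, and one then invokes Jerison--Kenig's Lemma 4.6 --- for a nonnegative harmonic function vanishing continuously on $\partial\Omega\cap B(Q,4t)$, such as $G(X,\cdot)$, one has $\sup_{\Omega\cap B(Q,2t)}G(X,\cdot)\le C\,G(X,A_t(Q))$ --- a nontrivial lemma proved by iterating the boundary H\"older estimate along Harnack chains. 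That ingredient (or an equivalent) is what your proposal is missing, and without it step (b), and hence the proof, does not close.
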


On unbounded NTA domains, there is a related notion of harmonic measure with pole at infinity.

\begin{proposition}[{\cite[Lemma 3.1, Corollary 3.2]{kenigtoroannals}}] \label{prop:infinity} Let $\Omega\subset\RR^n$ be an unbounded NTA domain with $R_\Omega=\infty$. There exists a function $v$ such that $\Delta v=0$ in $\Omega$, $v>0$ in $\Omega$, and $v=0$ on $\partial\Omega$ that is unique up to scaling by a positive constant.  For each $Q\in\partial\Omega$, there exists a unique doubling Radon measure $\omega^\infty$ on $\partial\Omega$ such that $$\int_{\partial\Omega} \varphi\, d\omega^\infty=\int_\Omega u\Delta \varphi\quad\text{for all }\varphi\in C_c^\infty(\RR^{n}),$$ where $u$ satisfies $\Delta u=0$ in $\Omega$, $u>0$ in $\Omega$, and $u=0$ on $\partial\Omega$, and $\omega^\infty(\partial\Omega\cap B(Q,1))=1.$\end{proposition}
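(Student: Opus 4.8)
The plan is to build $v$ by a limiting procedure and to realize $\omega^\infty$ as the distributional Laplacian of the extension of $v$ by zero; the boundary Harnack principle for NTA domains and its standard consequences (Jerison--Kenig) do all of the analytic work. Fix $X_0\in\Omega$; for large $R$ let $u_R$ solve $\Delta u_R=0$ in $\Omega\cap B(0,R)$ with $u_R=0$ on $\partial\Omega\cap\overline{B(0,R)}$ and $u_R=1$ on $\Omega\cap\partial B(0,R)$, and normalize $\tilde u_R:=u_R/u_R(X_0)$. Interior Harnack chains bound $\{\tilde u_R\}$ locally uniformly on $\Omega$, while the scale-invariant boundary H\"older estimates for NTA domains make $\{\tilde u_R\}$ equicontinuous up to $\partial\Omega$, uniformly in $R$; by Arzel\`a--Ascoli together with a diagonal argument a subsequence converges locally uniformly on $\overline\Omega$ to a function $v$ that is harmonic and positive in $\Omega$, vanishes continuously on $\partial\Omega$, and satisfies $v(X_0)=1$. (Equivalently, one may take a weak-$*$ limit of suitably normalized harmonic measures with poles escaping to infinity.) For uniqueness up to scaling, given two such functions $v_1,v_2$, the hypothesis $R_\Omega=\infty$ makes the boundary Harnack principle available at every scale with one constant, whence $v_1/v_2$ is bounded above and below on $\Omega$ and $t^\ast:=\sup\{t\ge 0:\ v_1\ge tv_2\ \text{on}\ \Omega\}$ lies in $(0,\infty)$. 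The function $w:=v_1-t^\ast v_2\ge 0$ is harmonic and vanishes on $\partial\Omega$; if $w\not\equiv 0$, then $w>0$ in $\Omega$ by the strong maximum principle and the boundary Harnack principle applied to the pair $(w,v_2)$ gives $w\ge cv_2$ on $\Omega$ for some $c>0$, i.e.\ $v_1\ge (t^\ast+c)v_2$, contradicting the maximality of $t^\ast$. Hence $v_1=t^\ast v_2$.

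Next, extend $v$ by zero to $\tilde v$ on $\RR^n$. Then $\tilde v$ is continuous, harmonic on $\RR^n\setminus\partial\Omega$, and at each $Q\in\partial\Omega$ the value $\tilde v(Q)=0$ is at most the average of $\tilde v\ge0$ over every small ball about $Q$; hence $\tilde v$ is subharmonic on $\RR^n$, so $\Delta\tilde v=:\omega^\infty_v$ is a nonnegative Radon measure supported on $\partial\Omega$, and by the definition of the distributional Laplacian $\int\varphi\,d\omega^\infty_v=\int_{\RR^n}\tilde v\,\Delta\varphi=\int_\Omega v\,\Delta\varphi$ for all $\varphi\in C_c^\infty(\RR^n)$. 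This measure is not identically zero: otherwise $\tilde v$ would be a nonnegative entire harmonic function vanishing on the interior of $\RR^n\setminus\overline\Omega$, which is nonempty by the exterior corkscrew condition, forcing $\tilde v\equiv0$ and contradicting $v(X_0)=1$. As in the interior-pole case, combining the maximum principle, the corkscrew conditions, and the boundary Harnack principle yields $\omega^\infty_v(\partial\Omega\cap B(Q,r))\approx r^{\,n-2}\,v(A_r(Q))$ for $Q\in\partial\Omega$ and $r>0$, where $A_r(Q)\in\Omega\cap B(Q,r)$ is an interior corkscrew point with $\dist(A_r(Q),\partial\Omega)>M_\Omega^{-1}r$; in particular $0<\omega^\infty_v(\partial\Omega\cap B(Q,1))<\infty$, and the doubling estimate for $\omega^\infty_v$ reduces to the Harnack-chain bound $v(A_{2r}(Q))\le C\,v(A_r(Q))$.

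Finally, fix $Q\in\partial\Omega$ and set $u:=v/\omega^\infty_v(\partial\Omega\cap B(Q,1))$ and $\omega^\infty:=\Delta\tilde u$; this is a doubling Radon measure with $\omega^\infty(\partial\Omega\cap B(Q,1))=1$ and $\int\varphi\,d\omega^\infty=\int_\Omega u\,\Delta\varphi$. If $\omega'$ is any Radon measure with $\int\varphi\,d\omega'=\int_\Omega u'\,\Delta\varphi$ for some $u'$ as in the statement and $\omega'(\partial\Omega\cap B(Q,1))=1$, then uniqueness of $v$ up to scaling forces $u'=\mu v$ for some $\mu>0$, hence $\omega'=\mu\,\omega^\infty_v$, and the normalization pins down $\mu$, giving $\omega'=\omega^\infty$. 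The main obstacle is simply assembling the NTA toolbox --- equicontinuity up to the boundary, comparability of positive harmonic functions that vanish on $\partial\Omega$, and the corkscrew/Harnack-chain estimates relating $\omega^\infty$ to $v$ at interior corkscrew points, all consequences of the boundary Harnack principle; the one place where $R_\Omega=\infty$ is genuinely used is the uniqueness step, which needs these comparisons at all scales with a single constant.
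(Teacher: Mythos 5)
Your proposal is correct, and since the paper offers no proof of Proposition \ref{prop:infinity} (it is imported from Kenig--Toro), the right comparison is with the cited source: your route --- compactness of normalized approximating solutions via interior Harnack chains and the scale-invariant boundary H\"older estimate, uniqueness up to scaling via the boundary Harnack principle applied at all scales with one constant (indeed the one place where $R_\Omega=\infty$ is essential), and realization of $\omega^\infty$ as the Riesz measure of the subharmonic zero-extension of $v$, with doubling reduced to the comparison $\omega^\infty(\partial\Omega\cap B(Q,r))\approx r^{n-2}v(A_r(Q))$ --- is essentially the standard argument behind the cited Lemma 3.1 and Corollary 3.2. The only step you assert rather than prove is that CFMS-type comparison for the pole-at-infinity Green function, but this is the same standard NTA estimate the paper itself invokes elsewhere (cf.\ its appeal to \cite[Lemma 4.8]{jerisonandkenig}), so I see no genuine gap.
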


Below we use a standard convention and call $\omega$ the \emph{harmonic measure of $\Omega$} if $\omega=\omega^X$ is a harmonic measure with pole at $X\in\Omega$ or if $\omega=\omega^\infty$ is a harmonic measure of $\Omega$ with pole at infinity. In each occurrence, the pole of the measure is fixed.

\begin{definition}[{\cite[Definitions 4.2 and 4.3]{kenigtorotwophase}}] Let $\Omega\subset\RR^n$ be an NTA domain with harmonic measure $\omega$. We say that  $f \in L^2_{\mathrm{loc}}(d\omega)$ belongs to $\BMO(d\omega)$ if \begin{equation*}\label{bmodef}
\sup_{r > 0} \sup_{Q\in \partial \Omega} \left(\fint_{B(Q,r)} |f-f_{Q,r}|^2\, d\omega\right)^{1/2} < \infty,
\end{equation*}
where $f_{Q,r} = \fint_{B(Q,r)}f \, d\omega$ denotes the average of $f$ over the ball. We denote by $\VMO(d\omega)$ the closure in $\BMO(d\omega)$ of the set of uniformly continuous bounded functions on $\partial\Omega$.
\end{definition}

Suppose that $\Omega^+=\Omega$ and $\Omega^-=\RR^n\setminus\overline{\Omega}$ are complimentary NTA domains with harmonic measures $\omega^+$ and $\omega^-$, respectively. If $\Omega^+$ and $\Omega^-$ are both unbounded and $R_{\Omega^\pm}=\infty$, then we require either that $\omega^+$ and $\omega^-$ have poles $X^\pm\in\Omega^\pm$ or that $\omega^+$ and $\omega^-$ both have poles at infinity. Otherwise, we assume $\omega^+$ and $\omega^-$ have finite poles. Following \cite{kenigtorotwophase}, we call $\Omega^+$ and $\Omega^-$ \emph{two-sided NTA domains}. When $\omega^\pm$ have poles at $X^\pm\in\Omega^\pm$, we let  $u^\pm$ denote the Green function of $\Omega^\pm$ with pole at $X^\pm$, i.e. the unique functions satisfying $$\int_{\partial \Omega} \varphi\, d\omega^{\pm}=\int_{\Omega^\pm} u^{\pm} \Delta \varphi\quad\text{for all }\varphi \in C^{\infty}_c(\mathbb R^n\setminus \{X^{\pm}\}).$$ When $\omega^\pm$ have poles at infinity, we let $u^\pm$ denote the functions given by Proposition \ref{prop:infinity}.

Let $Q,Q_j\in\partial\Omega$ with $Q_j\rightarrow Q$, and let $r_j>0$ with $r_j\downarrow 0$. Following \cite{kenigtorotwophase}, we define associated sequences $\Omega^\pm_j$, $\partial\Omega_j$, $u_j^\pm$, and $\omega^\pm_j$ by

\begin{equation}\begin{split}\label{blowup}
\Omega^\pm_j &:=\frac{\Omega^\pm-Q_j}{r_j},\qquad\qquad\qquad \partial\Omega_j :=\frac{\partial\Omega-Q_j}{r_j},\\  u^{\pm}_j(x) &:=\frac{u^{\pm}(r_jx + Q_j) r_j^{n-2}}{\omega^{\pm}(B(Q_j,r_j))}, \quad \omega^{\pm}_j(E) \colonequals  \frac{\omega^\pm(r_jE + Q_j)}{\omega^{\pm}(B(Q_j, r_j))}.
\end{split}\end{equation}

\begin{theorem}[{\cite[Theorem 4.2]{kenigtorotwophase}}]\label{t:blowups} With the assumptions and notation above, there exists a subsequence of $(\Omega^\pm_j,\partial\Omega_j,\omega^\pm_j,u^\pm_j)$, which we relabel, and there exist unbounded two-sided NTA domains $\Omega^\pm_\infty$, harmonic measures $\omega^\pm_\infty$ with pole at infinity, and Green functions $u^\pm_\infty$ with pole at infinity such that \begin{itemize}
\item $\overline{\Omega^\pm_j}\rightarrow \overline{\Omega^\pm_\infty}$ and $\partial\Omega_j\rightarrow \partial\Omega_\infty$ in the Attouch-Wets topology,
\item $\omega_j^\pm$ converges to $\omega_\infty^\pm$ in the vague topology (that is, weakly as Radon measures), and\item $u_j^\pm \rightarrow u^\pm_\infty$ uniformly on compact sets.\end{itemize} We call the tuple $(\Omega_\infty^\pm, \partial\Omega_\infty, \omega^\pm_\infty, u^\pm_\infty)$ a \emph{pseudoblowup of the harmonic measure} at $Q$. When $Q_j=Q$ for all $j$, a pseudoblowup is called a \emph{blowup}.
\end{theorem}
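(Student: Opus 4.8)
The plan is to read this as a compactness statement and extract subsequences in three coordinated stages — for the domains, for the harmonic measures, and for the Green functions — and then to identify the limits using Proposition \ref{prop:infinity}. First, I would record that the rescaled domains $\Omega^\pm_j$ in \eqref{blowup} are NTA with the \emph{same} constant $M_{\Omega^\pm}$ and with $R_{\Omega^\pm_j}=R_{\Omega^\pm}/r_j\uparrow\infty$, since the corkscrew and Harnack-chain conditions are invariant under translations and dilations. Because $0\in\partial\Omega_j$ for every $j$, the closed sets $\overline{\Omega^+_j}$, $\overline{\Omega^-_j}$, $\partial\Omega_j$ all meet a fixed compact set, so by the sequential compactness of closed subsets of $\RR^n$ in the Attouch--Wets topology we may pass to a subsequence along which all three converge, say to $F^+$, $F^-$, $F$. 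The real work here is to check that $F^\pm=\overline{\Omega^\pm_\infty}$ for genuine complementary \emph{unbounded} NTA domains $\Omega^\pm_\infty$ and that $F=\partial\Omega^+_\infty=\partial\Omega^-_\infty=\partial\Omega_\infty$: the two-sided corkscrew condition passes to the limit and prevents both collapse and thickening of the boundary (and forces it to be unbounded, hence the domains unbounded), the Harnack-chain condition passes to the limit and gives connectedness of $\Omega^\pm_\infty$ and of the exterior, and the decomposition $\RR^n=\Omega^+_j\cup\partial\Omega_j\cup\Omega^-_j$ passes to the limit to show the two limit domains are complementary.

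Second, for the harmonic measures: by construction $\omega^\pm_j(B(0,1))=1$, and Jerison--Kenig's local doubling property \cite{jerisonandkenig} (uniform over $Q$ in a fixed compact set and over scales below $R_{\Omega^\pm}$) rescales to $\omega^\pm_j(B(0,R))\le C(R)$ for all $j$ and $R$; hence $\{\omega^\pm_j\}$ is vaguely precompact and we pass to a further subsequence with $\omega^\pm_j\rightharpoonup\omega^\pm_\infty$. The doubling estimate also gives $\omega^\pm_j(B(0,1/2))\ge c>0$, so $\omega^\pm_\infty\neq 0$, and a standard argument using doubling shows no mass concentrates on $\partial B(0,1)$, so the normalization $\omega^\pm_\infty(B(0,1))=1$ is inherited in the limit.

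Third, for the Green functions: the Caffarelli--Fabes--Mortola--Salsa comparison between Green function and harmonic measure on NTA domains gives $u^\pm_j(A_1(0))\approx 1$ at a unit-scale corkscrew point $A_1(0)$ of $\Omega^\pm_j$, and interior Harnack then bounds $u^\pm_j$ from above on every compact subset of $\Omega^\pm_\infty$; extending $u^\pm_j$ by $0$ off $\Omega^\pm_j$ and combining interior gradient estimates with the boundary H\"older continuity of positive harmonic functions vanishing on $\partial\Omega^\pm_j$ (exponent and constant depending only on $M_{\Omega^\pm}$), the family $\{u^\pm_j\}$ is equicontinuous on compact sets, so Arzel\`a--Ascoli yields a further subsequence with $u^\pm_j\to u^\pm_\infty$ locally uniformly. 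The limit $u^\pm_\infty$ is nonnegative, harmonic in $\Omega^\pm_\infty$, vanishes on $\partial\Omega^\pm_\infty$, and is strictly positive there since $u^\pm_\infty(A_1(0))\gtrsim 1$ together with Harnack; by Proposition \ref{prop:infinity} it is therefore a Green function of $\Omega^\pm_\infty$ with pole at infinity. Finally, local uniform convergence forces $\Delta u^\pm_j\to\Delta u^\pm_\infty$ in $\mathcal D'$; since the normalizations in \eqref{blowup} are arranged precisely so that $\Delta u^\pm_j=\omega^\pm_j$ as distributions, comparing with the vague limit of $\omega^\pm_j$ identifies $\Delta u^\pm_\infty=\omega^\pm_\infty$, so $\omega^\pm_\infty$ is the harmonic measure of $\Omega^\pm_\infty$ with pole at infinity, completing the identification.

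The step I expect to be the main obstacle is the coherence of the geometric limit in the first stage: proving that the Attouch--Wets limits $F^\pm$ are the closures of honest complementary NTA domains and that $F$ is their common topological boundary — equivalently, that neither domain degenerates and the boundary neither collapses to a lower-dimensional set nor fattens. This is exactly where the two-sided NTA hypothesis is essential, and the argument is a careful transfer of the corkscrew and Harnack-chain conditions (and of the complementary decomposition) to the limit. Once that is in hand, the measure-theoretic part (vague precompactness via doubling and survival of the normalization) and the PDE part (uniform bounds and equicontinuity via the comparison estimates and boundary H\"older continuity, then the distributional identification of $\omega^\pm_\infty$) are comparatively routine given the standard NTA toolbox.
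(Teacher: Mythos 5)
Your outline is correct and is essentially the argument behind the cited result: the paper does not prove Theorem \ref{t:blowups} itself but quotes it from \cite{kenigtorotwophase}, and the proof there proceeds exactly as you describe --- scale invariance of the NTA constants with $R_{\Omega^\pm_j}\to\infty$, compactness of uniformly two-sided NTA domains in the Attouch--Wets/Hausdorff sense with the limit again two-sided NTA (this geometric step is itself quotable from \cite{kenigtoroannals}), vague precompactness of the normalized measures from uniform doubling, CFMS-type bounds plus boundary H\"older continuity and Arzel\`a--Ascoli for the $u^\pm_j$, and identification of the limit pair via $\Delta u^\pm_j=\omega^\pm_j$ in $\mathcal D'$ together with Proposition \ref{prop:infinity}. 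Two small caveats: your side claim that the normalization $\omega^\pm_\infty(B(0,1))=1$ survives ``by doubling'' is not part of the statement and is not justified by doubling alone (a doubling measure can charge a fixed sphere if $\partial\Omega_\infty$ overlaps it); in this paper that normalization is only invoked later, in the VMO regime, where the limit is a polynomial harmonic measure whose ball mass is continuous in the radius (Lemma \ref{l:hball}, Remark \ref{r:ball-norm}); and in the finite-pole case one should remark that the rescaled poles $(X^\pm-Q_j)/r_j$ leave every compact set, so $\Delta u^\pm_j=\omega^\pm_j$ holds on any fixed compact set for $j$ large, which is all the identification step needs.
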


\begin{remark}Following \cite{kenigtoroannals}, the measures $\omega^\pm_\infty$ are called \emph{pseudotangent measures} of $\omega^\pm$ at $Q$. When $Q_j=Q$ for all $j$, the measures $\omega^\pm_\infty$ are called \emph{tangent measures} of $\omega^\pm$ at $Q$. Tangent measures of general Radon measures were first introduced by Preiss \cite{preiss}.\end{remark}

\begin{remark}[Local set approximation] A sequence of nonempty closed sets $F_j\subset\RR^n$ converges to a nonempty closed set $F\subset\RR^n$ in the \emph{Attouch-Wets topology} if for every $r>0$, $$\lim_{j\rightarrow\infty}\excess(F_j\cap B(0,r),F)=0\quad\text{and}\quad\lim_{j\rightarrow\infty} \excess(F\cap B(0,r),F_j)=0,$$ where $\excess(A,B)=\sup_{x\in A}\inf_{y\in B}|x-y|$ and $\excess(\emptyset,B)=0$ for all nonempty sets $A,B\subset\RR^n$. For general background, see Beer \cite{beer}. This topology is a convenient choice, because it is metrizable and for every nonempty compact set $K\subset\RR^n$, the collection $\mathfrak{C}(K)$ of closed sets in $\RR^n$ that intersect $K$ is sequentially compact.

Let $F\subset\RR^n$ be a nonempty closed set and let $x\in F$. Following \cite{localsetapproximation} (motivated by \cite{preiss}, \cite{kenigtoroannals}), we say that a nonempty closed set $T\subset\RR^n$ is a \emph{pseudotangent set of $F$ at $x$} if there exist sequences $x_j\in F$ with $x_j\rightarrow x$ and $r_j>0$ with $r_j\downarrow 0$ such that $$\frac{F-x_j}{r_j}\rightarrow T\quad\text{in the Attouch-Wets topology}.$$ If $x_j=x$ for all $j$, we call $T$ a \emph{tangent set} of $F$ at $x$. Let $\mathcal{S}$ be a collection of nonempty closed sets containing the origin such that $S\in \mathcal{S}$ and $\lambda>0$ implies $\lambda S\in\mathcal{S}$. We say that a nonempty set $E\subset\RR^n$ is \emph{locally bilaterally well approximated by $\mathcal{S}$} if all pseudotangent sets of $\overline{E}$ belong to $\overline{\mathcal{S}}$. Equivalently (see \cite[\S4]{localsetapproximation}), $E$ is locally bilaterally well approximated by $\mathcal{S}$ if and only if for all compact sets $K\subset E$ and $\varepsilon>0$, there exists $r_{K,\varepsilon}>0$ such that \begin{quotation}for locations $x\in K$ and scales $0<r\leq r_{K,\varepsilon}$, there exists $S\in\mathcal{S}$ such that $\excess(E\cap B(x,r),x+S)< \varepsilon r$ and $\excess((x+S)\cap B(x,r),E)<\varepsilon r$.\end{quotation} Thus, sets that are locally bilaterally well approximated by $\mathcal{S}$ are like \emph{Reifenberg sets with vanishing constants} (e.g.~see \cite{kenigtoroannals}), except with approximation by hyperplanes replaced by approximation by sets in $\mathcal{S}$. See \cite{localsetapproximation} for further discussion. Using this terminology,
Theorem \ref{t:blowups} says that the pseudotangent sets of the boundary of a two-sided NTA domain are boundaries of unbounded two-sided NTA domains.\end{remark}

The following theorem identifies pseudotangents of harmonic measure under VMO or continuous two-phase free boundary conditions.

\begin{theorem}[{\cite[Theorem 4.4]{kenigtorotwophase}}]\label{t:vmoblowups} Under the hypothesis of Theorem  \ref{intro:b} (in particular, under the assumption that $\log h\in\VMO(d\omega^+)$), every pseudoblowup $(\Omega^\pm_\infty,\partial\Omega_\infty,\omega^\pm_\infty, u^\pm_\infty)$ of the harmonic measure satisfies $\omega_\infty^+=\omega_\infty^-$ and $u_\infty:=u_\infty^+-u_\infty^-$ is a harmonic polynomial of degree at most $d_0$ depending on the NTA constants of $\Omega^\pm$.\end{theorem}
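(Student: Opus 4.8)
\emph{Overview.} The plan is to prove the two assertions in turn: first the coincidence $\omega_\infty^+=\omega_\infty^-$ of the pseudotangent measures, and then — granted this — that $u_\infty:=u_\infty^+-u_\infty^-$ extends to an entire harmonic function with polynomial growth of controlled order, hence is a harmonic polynomial. The first step is the substantive one.

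\emph{Step 1: $\omega_\infty^+=\omega_\infty^-$.} The point is that rescaling ``averages out'' $h$, because $\log h$ has vanishing oscillation on small balls about $Q$ — which is exactly what $\log h\in\VMO(d\omega^+)$ says, and is also a consequence of continuity of $\log h$ near $Q$. Fix $\varphi\in C_c(\RR^n)$ supported in $B(0,R)$. Writing $d\omega^-=h\,d\omega^+$, changing variables, and using $\omega^-(B(Q_j,r_j))=h_{Q_j,r_j}\,\omega^+(B(Q_j,r_j))$, where $h_{Q_j,\rho}$ denotes the $\omega^+$-average of $h$ over $B(Q_j,\rho)$, one finds
\[
\int\varphi\,d\omega_j^-=\frac{1}{h_{Q_j,r_j}\,\omega^+(B(Q_j,r_j))}\int\varphi\Bigl(\frac{x-Q_j}{r_j}\Bigr)h(x)\,d\omega^+(x).
\]
Replacing $h$ by the constant $h_{Q_j,Rr_j}$ on the ball $B(Q_j,Rr_j)$, which contains the support of $x\mapsto\varphi((x-Q_j)/r_j)$, introduces an error which, after division by $\omega^+(B(Q_j,r_j))$, is at most
\[
\|\varphi\|_\infty\,\frac{h_{Q_j,Rr_j}}{\omega^+(B(Q_j,r_j))}\int_{B(Q_j,Rr_j)}\Bigl|\frac{h}{h_{Q_j,Rr_j}}-1\Bigr|\,d\omega^+.
\]
By the local doubling of $\omega^+$ — uniform here since the centers $Q_j\to Q$ lie in a fixed compact set and $r_j\downarrow0$ — the ratio $\omega^+(B(Q_j,Rr_j))/\omega^+(B(Q_j,r_j))$ stays bounded; and by $\VMO(d\omega^+)$ (equivalently, continuity of $\log h$ near $Q$) together with the John--Nirenberg inequality on the space of homogeneous type $(\partial\Omega,d\omega^+)$, both the $\omega^+$-average of $|h/h_{Q_j,Rr_j}-1|$ over $B(Q_j,Rr_j)$ and $|h_{Q_j,Rr_j}/h_{Q_j,r_j}-1|$ tend to $0$ as $j\to\infty$ (for the latter, telescope through $\lceil\log_2 R\rceil$ doublings and use that each oscillation term vanishes). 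Putting these together, $\int\varphi\,d\omega_j^-=\int\varphi\,d\omega_j^++o(1)$; passing to the limit via the vague convergence $\omega_j^\pm\to\omega_\infty^\pm$ of Theorem~\ref{t:blowups} gives $\int\varphi\,d\omega_\infty^-=\int\varphi\,d\omega_\infty^+$ for every $\varphi\in C_c(\RR^n)$, i.e.\ $\omega_\infty^+=\omega_\infty^-$.

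\emph{Step 2: $u_\infty$ is entire harmonic.} Extending each $u_j^\pm$ by zero off $\Omega_j^\pm$, the function $u_j:=u_j^+-u_j^-$ is harmonic on $\RR^n\setminus\partial\Omega_j$ and vanishes on $\partial\Omega_j$. For $\varphi\in C_c^\infty(\RR^n)$, once $j$ is large enough that the rescaled finite poles $X_j^\pm=(X^\pm-Q_j)/r_j$ (which escape to infinity as $r_j\downarrow0$) lie outside $\spt\varphi$ — no restriction being needed in the pole-at-infinity case — the defining relations $\int_{\Omega_j^\pm}u_j^\pm\Delta\varphi=\int\varphi\,d\omega_j^\pm$ yield
\[
\int_{\RR^n}u_j\,\Delta\varphi=\int\varphi\,d\omega_j^+-\int\varphi\,d\omega_j^-\longrightarrow 0
\]
by Step 1. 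Since $u_j^\pm\to u_\infty^\pm$ uniformly on compact sets and $\partial\Omega_j\to\partial\Omega_\infty$ in the Attouch--Wets topology (Theorem~\ref{t:blowups}), $u_j\to u_\infty$ uniformly on compact sets, hence in $\mathcal D'(\RR^n)$; therefore $\Delta u_\infty=0$ in $\mathcal D'(\RR^n)$, and by Weyl's lemma $u_\infty$ is an entire harmonic function.

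\emph{Step 3: polynomial growth, and the main obstacle.} Since $\Omega_j^\pm$ is a translate and dilate of $\Omega^\pm$, the harmonic measure $\omega_j^\pm$ is doubling with constant depending only on $n$ and $M_{\Omega^\pm}$; iterating doubling on the balls $B(0,2^k)$ and using $\omega_j^\pm(B(0,1))=1$ gives $\omega_j^\pm(B(0,\rho))\le C\rho^{N}$ for $\rho\ge1$ with $N=N(n,M_{\Omega^\pm})$, and likewise for $\omega_\infty^\pm$. The standard estimates comparing a Green function with pole at infinity to its harmonic measure (of Caffarelli--Fabes--Mortola--Salsa type, valid at corkscrew points and propagated along Harnack chains) then give $u_\infty^\pm(x)\lesssim|x|^{N-n+2}$ for $|x|$ large, so $u_\infty$ is an entire harmonic function of polynomial growth; the Cauchy-type estimates for harmonic functions force it to be a harmonic polynomial of degree at most $d_0:=\lfloor N-n+2\rfloor$, which depends only on $n$ and the NTA constants of $\Omega^\pm$. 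The hard part is Step 1: converting the qualitative vanishing-oscillation hypothesis on $\log h$ into quantitative convergence of the rescaled densities to $1$, uniformly enough to survive the blowup limit — this is precisely where the doubling of $\omega^+$ and the John--Nirenberg inequality are indispensable, and where one must verify that all constants stay uniform over the precompact family of centers $Q_j$ and the vanishing scales $r_j$. Steps 2 and 3 are then routine consequences of the blowup compactness theorem and of classical boundary estimates for harmonic measure on NTA domains.
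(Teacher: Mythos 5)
Your argument is correct in outline and is essentially a reconstruction of the original Kenig--Toro proof: this theorem is not proved in the present paper at all, but imported verbatim from \cite[Theorem 4.4]{kenigtorotwophase}, where the argument runs exactly along your lines (VMO of $\log h$ plus John--Nirenberg and local doubling of $\omega^+$ to show the normalized rescaled measures $\omega_j^\pm$ merge in the limit; the scale-invariant Green function identity and Weyl's lemma to make $u_\infty$ entire harmonic; iterated doubling of $\omega_\infty^\pm$ with CFMS/Carleson-type estimates to get polynomial growth of degree at most $d_0(n,M_{\Omega^\pm})$). The only places you leave compressed --- the passage from vanishing mean oscillation of $\log h$ to $\fint_B|h/h_B-1|\,d\omega^+\to 0$ via John--Nirenberg, and the bookkeeping of the factor $h_{Q_j,r_j}$ in the normalization --- are exactly the standard steps of that proof and are not gaps in substance.
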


Together, Theorems \ref{t:blowups} and \ref{t:vmoblowups} ensure that the boundary $\partial\Omega$ is locally bilaterally well approximated by zero sets of harmonic polynomials, whose positive and negative sets are unbounded NTA domains. In this setting, refined information about the boundary was obtained in \cite{badgerharmonicmeasure}, \cite{badgerflatpoints}, and \cite{BETHarmonicpoly}. See Theorem \ref{intro:b} above. In addition to the consequences listed there, we can classify the pseudotangent sets of $\partial\Omega$ along sequences in $\Gamma_d$, where the tangent sets of $\partial\Omega$ are zero sets of homogeneous harmonic polynomials of degree $d$. This is a consequence of the special geometry of harmonic varieties; see \cite[Theorem 1.4]{BETHarmonicpoly}.

\begin{theorem}[{\cite[Theorem 1.1(iv)]{BETHarmonicpoly}}] \label{homogenouspseudoblowups}
With the same notation as in Theorem \ref{intro:b}, assume $Q_j \in \Gamma_d$ converges to $Q\in\Gamma_d$. For all $r_j \downarrow 0$, there is a subsequence of $(Q_j,r_j)$, which we relabel, such that the pseudotangent set $\lim_{j\rightarrow\infty}(\partial \Omega - Q_j)/r_j$ is the zero set of a homogeneous harmonic polynomial of degree $d$.
\end{theorem}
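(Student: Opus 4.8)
The plan is to extract a limiting set from the blowup sequence by compactness, recognize it as the zero set of a harmonic polynomial, and then use the fine structure of the stratum $\Gamma_d$---which rests on the special geometry of zero sets of homogeneous harmonic polynomials in \cite[Theorem 1.4]{BETHarmonicpoly}---to force that limit to be the zero set of a \emph{single} homogeneous harmonic polynomial of degree exactly $d$. First I would invoke Theorem~\ref{t:blowups}: after passing to a subsequence, which we relabel, $\partial\Omega_j := (\partial\Omega - Q_j)/r_j$ converges in the Attouch--Wets topology to $T := \partial\Omega_\infty$, where $(\Omega^\pm_\infty, \partial\Omega_\infty, \omega^\pm_\infty, u^\pm_\infty)$ is a pseudoblowup of harmonic measure at $Q$ and $0 \in T$. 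Since the hypotheses of Theorem~\ref{intro:b} are in force, Theorem~\ref{t:vmoblowups} gives $\omega^+_\infty = \omega^-_\infty$ and identifies $p := u^+_\infty - u^-_\infty$ as a nonconstant harmonic polynomial of degree at most $d_0$; as $u^\pm_\infty$ are the Green functions with pole at infinity of the unbounded NTA domains $\Omega^\pm_\infty$ (extended by zero off $\Omega^\pm_\infty$), one has $p > 0$ on $\Omega^+_\infty$, $p < 0$ on $\Omega^-_\infty$, and $p \equiv 0$ on $\partial\Omega_\infty$, and since $\RR^n = \Omega^+_\infty \cup \partial\Omega_\infty \cup \Omega^-_\infty$ this forces $T = p^{-1}(0)$. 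It then remains to upgrade ``$T$ is the zero set of a harmonic polynomial of degree at most $d_0$'' to ``$T$ is the zero set of a homogeneous harmonic polynomial of degree $d$''.

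For this I would exploit that the compact set $K := \{Q\} \cup \{Q_j : j \ge 1\}$ lies in the single stratum $\Gamma_d$. The key input---which I expect to be the main obstacle, and which is exactly where \cite[Theorem 1.4]{BETHarmonicpoly} enters---is a uniform, quantitative form of Theorem~\ref{intro:b}(ii): there is a modulus $\eta(s) \downarrow 0$ as $s \downarrow 0$ such that for every $x \in K$ and every $0 < s \le s_0$, the boundary $\partial\Omega \cap B(x,s)$ is bilaterally $\eta(s)s$-close, in the excess sense of \S\ref{s: notation}, to $x + P_{x,s}^{-1}(0)$ for some homogeneous harmonic polynomial $P_{x,s}$ of degree \emph{exactly} $d$. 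Two features of this statement are what make it delicate. First, the approximant must be homogeneous: Theorem~\ref{intro:b}(i) by itself yields only approximation by possibly inhomogeneous harmonic varieties of degree at most $d_0$, and it is the restriction of base points to $\Gamma_d$, where every tangent set is the zero set of a homogeneous harmonic polynomial of degree $d$, that---after a compactness argument over $K$---buys homogeneity. Second, the degree must be exactly $d$ at \emph{all} small scales, with no dropping or jumping; this is precisely the ``gap'' of \cite[Theorem 1.4]{BETHarmonicpoly} keeping zero sets of homogeneous harmonic polynomials of distinct degrees quantitatively separated. Without this separation an Attouch--Wets limit of degree-$d$ zero sets could a priori be the zero set of a homogeneous harmonic polynomial of a different degree.

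Granting this uniform approximation, the remainder is a compactness-and-patching argument. Fix $R > 0$; for $j$ large enough that $r_j R \le s_0$, apply the approximation at $x = Q_j$ and scale $s = r_j R$ and dilate by $1/r_j$ about $Q_j$, which leaves the cone $P_{Q_j, r_j R}^{-1}(0)$ unchanged; normalizing the resulting polynomial by $\|P_j\|_{L^\infty(B(0,1))} = 1$ (which does not change its zero set), we get that $\partial\Omega_j \cap B(0,R)$ is bilaterally $\eta(r_j R)R$-close to $P_j^{-1}(0)$. The normalized degree-$d$ homogeneous harmonic polynomials form a compact family, so along a further subsequence $P_j \to q^{(R)}$, a nonzero degree-$d$ homogeneous harmonic polynomial, and $P_j^{-1}(0) \to (q^{(R)})^{-1}(0)$ in the Attouch--Wets topology (zero sets of nonzero homogeneous harmonic polynomials vary continuously under locally uniform convergence of the polynomial, by the maximum principle). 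Letting $j \to \infty$ and using $\partial\Omega_j \to T$ together with $\eta(r_j R) \to 0$ yields $T \cap B(0,R) = (q^{(R)})^{-1}(0) \cap B(0,R)$. Finally, letting $R \to \infty$ along a subsequence with $q^{(R)} \to q$ (again by compactness of the normalized family), we obtain $T \cap B(0,\rho) = q^{-1}(0) \cap B(0,\rho)$ for every $\rho > 0$, hence $T = q^{-1}(0)$ with $q$ homogeneous harmonic of degree $d$, as desired.
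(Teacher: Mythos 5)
The paper does not actually prove Theorem \ref{homogenouspseudoblowups}: it imports it verbatim from \cite[Theorem 1.1(iv)]{BETHarmonicpoly}, remarking only that it is ``a consequence of the special geometry of harmonic varieties'' (\cite[Theorem 1.4]{BETHarmonicpoly}). At the level of strategy your outline is consistent with that source: Theorems \ref{t:blowups} and \ref{t:vmoblowups} do identify the pseudotangent set as $p^{-1}(0)$ for a nonconstant harmonic polynomial $p$ of degree at most $d_0$, the remaining issue is to force homogeneity and degree exactly $d$, and the degree-separation theorem is indeed the right tool there. Your final patching step (compactness of normalized degree-$d$ homogeneous harmonic polynomials, convergence of their zero sets via the sign change forced by the maximum principle, and the cone structure making agreement on balls globalize) is fine.

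The genuine gap is in your ``key input.'' You posit a modulus $\eta(s)\downarrow 0$, uniform over the compact set $K=\{Q\}\cup\{Q_j\}$, such that at every $x\in K$ and every scale $s\le s_0$ the boundary is bilaterally $\eta(s)s$-approximated by the zero set of a homogeneous harmonic polynomial of degree exactly $d$, and you justify it by ``a compactness argument over $K$.'' That argument is circular. Membership in $\Gamma_d$ only controls tangent sets at each fixed point, so it yields a modulus $s_x(\epsilon)$ that may degenerate as $x$ varies over $K$; the natural contradiction/compactness argument to remove the $x$-dependence produces rescalings $(\partial\Omega-x_i)/s_i$ with $x_i\in\Gamma_d$, $x_i\to x_*\in\Gamma_d$, $s_i\downarrow 0$ --- that is, exactly a pseudoblowup along $\Gamma_d$ --- and to derive a contradiction you would already need to know that such limits are zero sets of degree-$d$ homogeneous harmonic polynomials, which is the theorem being proved. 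Indeed, by the pseudotangent characterization of bilateral approximation recalled in \S\ref{s: notation}, your uniform statement is essentially equivalent to the theorem. Closing this loop is the actual content of \cite[Theorem 1.1(iv)]{BETHarmonicpoly}: there the quantitative separation of degrees (\cite[Theorem 1.4]{BETHarmonicpoly}) is used not merely to pass degree-$d$ cones to the limit, but to show that closeness to a degree-$d$ homogeneous cone at one point and scale propagates to nearby points and comparable smaller scales (given that $\partial\Omega$ is bilaterally well approximated by harmonic varieties everywhere, by Theorem \ref{intro:b}(i)), so the detected degree cannot jump along the sequence $(Q_j,r_j)$. Without such a propagation argument, your proposal reduces the theorem to an equivalent unproved assertion.
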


We now recall some facts about \emph{polynomial harmonic measures}.
Let $p:\RR^n\rightarrow\RR$ be a nonconstant harmonic polynomial. Then $\pm p$ is a Green function with pole at infinity for any connected component of $\{x:\pm p(x)>0\}$. Hence there exists (e.g.~see \cite{badgerharmonicmeasure}) a unique Radon measure $\omega_p$ on $\Sigma_p=\{x:p(x)=0\}$ such that \begin{equation}\label{greenfunctionatinfinity}\int_{\Sigma_p} \varphi\, d\omega_{p} = \int p^\pm \Delta \varphi,\quad\text{for all }\varphi\in C^\infty_c(\RR^n).\end{equation} As noted above, Kenig and Toro \cite{kenigtorotwophase} proved that when $\Omega^\pm$ are two-sided NTA domains, $\omega^+\ll\omega^-\ll\omega^+$ and $\log(d\omega^-/d\omega^+)\in\VMO(d\omega^+)$, all pseudoblowups of the harmonic measures $\omega^\pm$ are polynomial harmonic measures $\omega_p$, where $\{x:\pm p(x)>0\}$ are unbounded NTA domains. However, we note that polynomial harmonic measures can be defined even if the components of $\{x:\pm p(x)>0\}$ are not NTA (an example of such a polynomial is $p(x,y,z)=x^2-y^2+z^3-3x^2z$, see \cite[Example 5.1]{lm15}).

\begin{lemma}[{\cite[Lemma 4.2]{badgerharmonicmeasure}}] \label{l:hball} If $p:\RR^n\rightarrow\RR$ is a homogeneous harmonic polynomial of degree $d\geq 1$, then \begin{equation}\omega_p(B(0,r))= \frac{d}{2} r^{n-2+d}\|p\|_{L^1(S^{n-1})}\quad\text{for all }r>0,\end{equation} \begin{equation}\frac{\omega_p(B(0,\tau r))}{\omega_p(B(0,r))}=\tau^{n-2+d}\quad\text{for all }\tau,r>0.\end{equation}\end{lemma}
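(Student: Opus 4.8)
\emph{Proposal.} The plan is to evaluate $\int_{\Sigma_p}\varphi\,d\omega_p$ for \emph{radial} test functions $\varphi$ and then let $\varphi$ approximate the indicator of a ball. Recall from \eqref{greenfunctionatinfinity} that $\omega_p$ is characterized by $\int_{\Sigma_p}\varphi\,d\omega_p=\int_{\RR^n}p^+\Delta\varphi\,dx$ for all $\varphi\in C_c^\infty(\RR^n)$ (here $\int p^+\Delta\varphi=\int p^-\Delta\varphi$ since $\Delta p=0$). Given $\psi\in C_c^\infty([0,\infty))$, set $\varphi(x):=\psi(|x|)$, so that $\Delta\varphi(x)=\psi''(|x|)+\tfrac{n-1}{|x|}\psi'(|x|)$. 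Using $d$-homogeneity, $p^+(\rho\theta)=\rho^d p^+(\theta)$ for $\rho>0$ and $\theta\in S^{n-1}$, so passing to polar coordinates splits the integral into an angular and a radial factor:
\[
\int_{\RR^n}p^+\Delta\varphi\,dx=\Bigl(\int_{S^{n-1}}p^+\,d\mathcal{H}^{n-1}\Bigr)\int_0^\infty\Bigl(\psi''(\rho)+\tfrac{n-1}{\rho}\psi'(\rho)\Bigr)\rho^{n-1+d}\,d\rho.
\]

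First I would handle the angular factor. Since $p$ is a homogeneous harmonic polynomial of degree $d\geq1$, its restriction to $S^{n-1}$ is a spherical harmonic of positive degree, hence has zero mean (equivalently, $p(0)=0$ and the mean-value property give $\int_{S^{n-1}}p\,d\mathcal H^{n-1}=0$); therefore $\int_{S^{n-1}}p^+=\int_{S^{n-1}}p^-=\tfrac12\int_{S^{n-1}}|p|=\tfrac12\|p\|_{L^1(S^{n-1})}$. For the radial factor I would integrate by parts twice. All boundary terms at $\rho=\infty$ vanish by compact support, and all boundary terms at $\rho=0$ vanish because the powers $\rho^{n-1+d}$ and $\rho^{n-2+d}$ that appear have strictly positive exponents (as $n\geq2$, $d\geq1$). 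The first integration by parts produces the cancellation $-(n-1+d)+(n-1)=-d$, and the second then yields
\[
\int_0^\infty\Bigl(\psi''(\rho)+\tfrac{n-1}{\rho}\psi'(\rho)\Bigr)\rho^{n-1+d}\,d\rho=-d\int_0^\infty\psi'(\rho)\rho^{n-2+d}\,d\rho=d(n-2+d)\int_0^\infty\psi(\rho)\rho^{n-3+d}\,d\rho.
\]

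Next comes the approximation argument. For $\epsilon>0$ choose radial cutoffs $\psi^{\mathrm{in}}_\epsilon$ (equal to $1$ on $[0,r-\epsilon]$, vanishing on $[r,\infty)$) and $\psi^{\mathrm{out}}_\epsilon$ (equal to $1$ on $[0,r]$, vanishing on $[r+\epsilon,\infty)$), both valued in $[0,1]$. Inserting the corresponding radial test functions into the identities above and using $\int_0^s\rho^{n-3+d}\,d\rho=s^{n-2+d}/(n-2+d)$ (valid since $n-2+d\geq1$), one gets
\[
\tfrac12\|p\|_{L^1(S^{n-1})}\,d\,(r-\epsilon)^{n-2+d}\ \leq\ \omega_p(B(0,r))\ \leq\ \omega_p(\overline{B(0,r)})\ \leq\ \tfrac12\|p\|_{L^1(S^{n-1})}\,d\,(r+\epsilon)^{n-2+d}.
\]
Letting $\epsilon\downarrow0$ gives $\omega_p(B(0,r))=\omega_p(\overline{B(0,r)})=\tfrac d2\,r^{n-2+d}\|p\|_{L^1(S^{n-1})}$, which is the first formula; the second is then immediate, $\omega_p(B(0,\tau r))/\omega_p(B(0,r))=(\tau r)^{n-2+d}/r^{n-2+d}=\tau^{n-2+d}$.

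The computation is essentially routine; the only points needing care are (a) verifying that every boundary term in the two integrations by parts vanishes, which reduces to the elementary inequalities $n-1+d>0$ and $n-2+d>0$, and (b) justifying the squeezing of $\omega_p(B(0,r))$ between values of $\int\varphi\,d\omega_p$, which uses only that $\omega_p$ is a positive Radon measure and that the radial cutoffs may be taken monotone in $\rho$. I would expect these bookkeeping points to be the ``hardest'' part. An alternative route would be to identify $\omega_p$ with $|\nabla p|\,\mathcal H^{n-1}\res\Sigma_p$ and integrate that in polar coordinates, but this forces a separate argument that $\Delta p^+$ places no mass on the singular set $\{p=0,\ \nabla p=0\}$; the radial--test-function approach above avoids that issue entirely, so I would prefer it.
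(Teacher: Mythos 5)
Your proposal is correct and complete. Note that the paper itself offers no proof of this lemma --- it is quoted verbatim with a citation to \cite{badgerharmonicmeasure}, so there is no in-paper argument to compare against; your derivation stands on its own and uses only the defining identity \eqref{greenfunctionatinfinity}. I checked the computation: the angular factor $\int_{S^{n-1}}p^+=\tfrac12\|p\|_{L^1(S^{n-1})}$ is justified by the mean value property ($p(0)=0$), the two radial integrations by parts are legitimate since the boundary exponents $n-1+d$ and $n-2+d$ are strictly positive, and the squeeze $\varphi^{\mathrm{in}}_\epsilon\leq\chi_{B(0,r)}\leq\chi_{\overline{B(0,r)}}\leq\varphi^{\mathrm{out}}_\epsilon$ correctly pins down $\omega_p(B(0,r))=\omega_p(\overline{B(0,r)})=\tfrac d2 r^{n-2+d}\|p\|_{L^1(S^{n-1})}$, from which the doubling identity is immediate. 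One cosmetic point: for a general $\psi\in C_c^\infty([0,\infty))$ the function $\psi(|x|)$ need not be smooth at the origin, but your cutoffs are constant near $0$, so the test functions you actually use are admissible. For comparison, the standard (and likely the cited reference's) route is shorter: by Green's formula $\omega_p(B(0,r))=\int_{\partial B(0,r)}\partial_\nu p^+\,d\sigma$, and Euler's relation $x\cdot\nabla p=d\,p$ gives $\partial_\nu p^+=\tfrac dr p^+$ on $\partial B(0,r)$, yielding the formula in two lines; however, that route requires justifying the Gauss--Green step for the merely Lipschitz function $p^+$ (in particular that no mass sits on the critical set $\{p=0,\nabla p=0\}$), which your radial--test-function argument sidesteps entirely, as you observe. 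Either approach is fine; yours trades brevity for avoiding that technical point.
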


\begin{remark}\label{r:ball-norm} By our choice of normalization (see \eqref{blowup}), if $p=u_\infty^+-u_\infty^-$ is a pseudoblowup appearing when $\log (d\omega^-/d\omega^+)\in\VMO(d\omega^+)$, then $\omega_p(B(0,1))=1$. Thus, if $\lambda p$ and $\lambda' p$ both appear as pseudoblowups (even at different base points), then $\lambda = \lambda'$. Furthermore, since $\{\pm p > 0\}$ are NTA domains with NTA constants determined the NTA constants of the original domain $\Omega$,  standard estimates for the Green's function on NTA domains (e.g.~see \cite[Lemma 4.8]{jerisonandkenig}) ensure that \begin{equation}\label{e:sizeofp}\sup_{x\in B(0,1)} |p(x)| \simeq \sup_{x\in B(0,1)} u_\infty^\pm(x)\simeq \omega_p(B(0,1)) =1,\end{equation} where the implicit constants depend only on the NTA constants of $\Omega$. It follows that for every compact set $K\subset\partial\Omega$, $$\sup_{Q\in K}\sup_{0<r\leq 1} \frac{u^\pm(rx+Q) r^{n-2}}{\omega^-(B(Q,r))} \simeq 1$$ for all $0<r\leq r_0(K)$. \end{remark}

At flat points $Q\in\Gamma_1$, it is known that $\omega$ is locally asymptotically optimally doubling (see \cite{kenigtoroduke}, \cite{DKT}, \cite{badgerflatpoints}). In particular, when $Q\in\Gamma_1$, $\omega^{\pm}(B(Q,r))/r^{n-1}$ does not grow faster or slower than any power of $r$. We now verify that this behavior persists at singular points $Q\in\Gamma_d$, $d>1$. In the parlance of geometric measure theory, the following lemma implies the \emph{local dimension} (e.g.~see \cite[p. 25]{falconer}) of $\omega^\pm$ at $Q\in\Gamma_d$ is $n-2+d$.

\begin{lemma}\label{growthofballsatsingularpoints} With the same notation as in Theorem \ref{intro:b}, let $1\leq d\leq d_0$ and assume that $K$ is a compact subset of $\Gamma_d$. Then, for every $\delta > 0$, \begin{equation}\label{growthofomega} \liminf_{r\downarrow 0}\inf_{Q\in K} \frac{\omega^{\pm}(B(Q,r))}{r^{n-2+d+\delta}} = \infty,\qquad \limsup_{r\downarrow 0}\sup_{Q\in K} \frac{\omega^{\pm}(B(Q,r))}{r^{n-2+d-\delta}} = 0.\end{equation}
\end{lemma}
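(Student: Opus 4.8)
The plan is to establish a single two-sided power estimate and then read off both assertions of \eqref{growthofomega} from it: namely, for every $\eta\in(0,n-2+d)$ there exist $r_0>0$ and $0<c_0\le C_0<\infty$, depending only on $\eta$, $K$, $n$ and the NTA constants of $\Omega^\pm$, such that
\[
c_0\,r^{\,n-2+d+\eta/2}\ \le\ \omega^\pm(B(Q,r))\ \le\ C_0\,r^{\,n-2+d-\eta/2}\qquad\text{for all }Q\in K,\ 0<r\le r_0.
\]
Granting this, fix $\delta>0$; we may assume $\delta<n-2+d$ (larger $\delta$ only weakens both statements). Applying the estimate with $\eta=\delta$ and dividing by $r^{\,n-2+d\pm\delta}$ gives $\inf_{Q\in K}\omega^\pm(B(Q,r))/r^{\,n-2+d+\delta}\ge c_0\,r^{-\delta/2}$ and $\sup_{Q\in K}\omega^\pm(B(Q,r))/r^{\,n-2+d-\delta}\le C_0\,r^{\delta/2}$ for $0<r\le r_0$; letting $r\downarrow0$ yields \eqref{growthofomega}.

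The core is a one-scale comparison proved by a blowup/compactness argument. \emph{Claim:} for every $\varepsilon>0$ there is $r_0=r_0(\varepsilon,K)>0$ so that $\bigl|\omega^\pm(B(Q,r/2))/\omega^\pm(B(Q,r))-2^{-(n-2+d)}\bigr|<\varepsilon$ whenever $Q\in K$ and $0<r\le r_0$. If this fails, there are $Q_j\in K$ and $r_j\downarrow0$ for which the difference is $\ge\varepsilon$; since $K$ is compact and contained in $\Gamma_d$, after passing to a subsequence $Q_j\to Q\in K\subset\Gamma_d$. Form the rescaled data $(\Omega^\pm_j,\partial\Omega_j,\omega^\pm_j,u^\pm_j)$ of \eqref{blowup} about $(Q_j,r_j)$; by Theorem~\ref{t:blowups} a further subsequence admits a pseudoblowup $(\Omega^\pm_\infty,\partial\Omega_\infty,\omega^\pm_\infty,u^\pm_\infty)$, and by Theorems~\ref{t:vmoblowups} and \ref{homogenouspseudoblowups} together with the normalization of Remark~\ref{r:ball-norm}, $\omega^+_\infty=\omega^-_\infty=\omega_p$, the polynomial harmonic measure of a homogeneous harmonic polynomial $p$ of degree $d$ with $\omega_p(B(0,1))=1$. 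By Lemma~\ref{l:hball}, $s\mapsto\omega_p(B(0,s))=\tfrac d2 s^{\,n-2+d}\|p\|_{L^1(S^{n-1})}$ is continuous, so $\omega_p(\partial B(0,s))=0$ for every $s>0$, and hence the vague convergence $\omega^\pm_j\to\omega_p$ upgrades to $\omega^\pm_j(B(0,s))\to\omega_p(B(0,s))$ for $s=\tfrac12$ and $s=1$. Since $\omega^\pm_j(B(0,1))=1$ and $\omega^\pm_j(B(0,\tfrac12))=\omega^\pm(B(Q_j,r_j/2))/\omega^\pm(B(Q_j,r_j))$, the quantity in the Claim tends to $\omega_p(B(0,\tfrac12))=2^{-(n-2+d)}$, a contradiction.

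From the Claim the two-sided estimate follows by dyadic iteration. Given $\eta\in(0,n-2+d)$, pick $\varepsilon=\varepsilon(\eta)>0$ small enough that $(2^{-(n-2+d)}-\varepsilon,\,2^{-(n-2+d)}+\varepsilon)\subseteq[\,2^{-(n-2+d+\eta/2)},\,2^{-(n-2+d-\eta/2)}\,]$, which is possible because the endpoints are strictly ordered, and take the associated $r_0$. Then for $Q\in K$ each one-scale ratio at a radius $\le r_0$ lies in this interval, so multiplying along $r_0,r_0/2,\dots,2^{-k}r_0$ gives $2^{-k(n-2+d+\eta/2)}\le\omega^\pm(B(Q,2^{-k}r_0))/\omega^\pm(B(Q,r_0))\le 2^{-k(n-2+d-\eta/2)}$. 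For general $0<r\le r_0$, choosing $k$ with $2^{-(k+1)}r_0<r\le 2^{-k}r_0$ and using monotonicity of $s\mapsto\omega^\pm(B(Q,s))$ converts this into $c_0\,r^{\,n-2+d+\eta/2}\le\omega^\pm(B(Q,r))\le C_0\,r^{\,n-2+d-\eta/2}$, where $C_0=2^{\,n-2+d-\eta/2}r_0^{-(n-2+d-\eta/2)}\sup_{Q\in K}\omega^\pm(B(Q,r_0))$ and $c_0=2^{-(n-2+d+\eta/2)}r_0^{-(n-2+d+\eta/2)}\inf_{Q\in K}\omega^\pm(B(Q,r_0))$. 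Here the supremum is finite since $\omega^\pm$ is a Radon measure, and the infimum is positive since $Q\mapsto\omega^\pm(B(Q,r_0))$ is lower semicontinuous and strictly positive on the compact set $K$ (harmonic measure charges every ball centered on the boundary).

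The one substantial point is the Claim, and within it the promotion of vague convergence of the rescaled harmonic measures to convergence of the masses $\omega^\pm_j(B(0,\tfrac12))$ and $\omega^\pm_j(B(0,1))$: this is exactly where the homogeneity of the blowup polynomial from Theorem~\ref{homogenouspseudoblowups} and the explicit mass formula of Lemma~\ref{l:hball} are essential, since together they guarantee that the limiting polynomial harmonic measure puts no mass on any sphere about the origin. Everything else — the contradiction/compactness setup and the dyadic iteration — is routine once the blowup machinery of \S\ref{s: notation} is in hand.
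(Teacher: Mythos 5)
Your proof is correct and follows essentially the same route as the paper's: a compactness/blowup argument (via Theorems \ref{t:blowups}, \ref{homogenouspseudoblowups}, Lemma \ref{l:hball}, and Remark \ref{r:ball-norm}) to get a uniform one-scale ratio estimate $\omega^\pm(B(Q,\tau r))/\omega^\pm(B(Q,r))\approx \tau^{n-2+d}$, followed by iteration over geometric scales. The only differences are cosmetic — the paper proves the asymptotic ratio statement for every $\tau>0$ while you fix $\tau=1/2$, and you spell out the ``standard argument'' passing from dyadic scales to all scales that the paper leaves implicit.
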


\begin{proof} Fix a compact set $K\subset\Gamma_d$. We first prove that \begin{equation}\label{asod} \lim_{r\downarrow 0}\sup_{Q\in K} \left|\frac{\omega^\pm(B(Q,\tau r))}{\omega^\pm(B(Q,r))} -\tau^{n-2+d}\right| = 0\quad\text{for all }\tau>0.\end{equation} By Theorem \ref{homogenouspseudoblowups}, for every sequence $Q_j\in K$ with $Q_j\rightarrow Q$ and for every sequence $r_j \downarrow 0$, there is a subsequence (which we relabel) and a $d$-homogenous harmonic polynomial $p$, such that $\omega^\pm_j \rightharpoonup \omega_{p}$. By Lemma \ref{l:hball} and Remark \ref{r:ball-norm}, $\omega_p(B(0,\tau r)) = \tau^{n-2+d}\omega_p(B(0,r))$ and $\omega_p(B(0,1))=1$. It follows that $$\omega_{p}(B(0,\tau)) = \tau^{n-2+d}\quad\text{and}\quad\omega_p(\partial B(0,\tau))=0\quad\text{for all }\tau>0.$$  Therefore, recalling the definition of $\omega^\pm_j$ \begin{equation}\label{showingasod}\lim_{j\rightarrow \infty} \frac{\omega^\pm(B(Q_j,\tau r_j))}{\omega^\pm(B(Q_j,r_j))} = \lim_{j\rightarrow \infty} \omega^\pm_j(B(0,\tau)) = \omega_{p}(B(0,\tau)) = \tau^{n-2+d}
\end{equation} for all $\tau>0$. Because \eqref{showingasod} holds for an arbitrary initial sequence, we obtain \eqref{asod}.

To proceed, fix $\delta > 0$ and $\tau \in(0,1)$. Pick $\varepsilon>0$ to be specified later. By \eqref{asod}, there exists $r_0 > 0$ such that for all $Q\in K$ and $0<r\leq r_0$, \begin{equation}\label{sandwichomega} (1-\varepsilon)\tau^{n-2+d}\leq  \frac{\omega(B(Q,\tau r))}{\omega(B(Q,r))} \leq (1+\varepsilon)\tau^{n-2+d}\end{equation} Fix $Q\in K$. Iterating \eqref{sandwichomega} $k$ times, we obtain \begin{equation}\label{iteratesandwichomega} (1-\varepsilon)^k \tau^{k(n-2+d)} \omega(B(Q,r_0)) \leq \omega(B(Q,\tau^k r_0)).\end{equation} Dividing both sides by $(\tau^k r_0)^{n-2+d+\delta}$ yields  $$r_0^{-(n-2+d+\delta)} \left(\frac{1-\varepsilon}{\tau^\delta}\right)^k \leq \frac{\omega(B(Q,\tau^kr_0))}{(\tau^kr_0)^{n-2+d+\delta}}.$$ Thus, provided we chose $\varepsilon$ sufficiently small to guarantee $(1-\varepsilon)/\tau^d>1$, $$\liminf_{k\rightarrow\infty} \frac{\omega^\pm(B(Q,\tau^kr_0))}{(\tau^kr_0)^{n-2+d+\delta}}=\infty\quad\text{(uniformly over $Q\in K$)}.$$ The first inequality in \eqref{growthofomega} now follows by a standard argument. The proof of the second inequality is very similar.
\end{proof}

We end this section with an observation on the convergence of the gradients under these pseudo-blowups.

\begin{remark}\label{r:weakstarconvergence}
In  \cite[Proposition 4.4]{engelsteintwophase}, the second author proved under the assumption $\log(h) \in C(\partial \Omega)$ that the Green's functions, $u^{\pm}$, are locally Lipschitz. This Lipschitz bound depended only on the $L^2$ norm of $u^{\pm}$ in the compact set and the $L^\infty$ norm of $\log(h)$. In fact note that the $u_j^{\pm}$ are uniformly locally bounded in $L^2_{\mathrm{loc}}$ (as the sequence is uniformly bounded in $L^\infty_{\mathrm{loc}}$) and that $h_j(P) = \frac{d\omega_j^{-}}{d\omega_j^+}(P) = h(r_jP +Q_j)$. Thus recalling that the $u_j^{\pm}$ are harmonic on $\{u_j^{\pm}>0\}$ and repeating the arguments in Section 4 of \cite{engelsteintwophase} we conclude that the functions $u_j^{\pm}$ are uniformly locally Lipschitz.

Also note that the uniform convergence of $u^\pm_j$ to $p^{\pm}$ implies convergence in $C^\infty$ in compact subsets of $\{p^{\pm} > 0\}$ (as the $u_j^\pm$ are harmonic there). To show that the $|\nabla u^{\pm}_j|$ converges to $|\nabla p^{\pm}|$ both in $L^2_{\mathrm{loc}}$ and weak-star in $L^\infty$ (the former implies the latter), recall that the $p^{\pm}$ are locally Lipschitz and $|\{x\in B_R(0) : \mathrm{dist}(x, \{p^{\pm} = 0\}) \leq \varepsilon\}| < C_R \varepsilon$  (see, e.g. \cite{cheegernabervaltorta}, though for harmonic polynomials it is a simple consequence of analyticity).  Thus, by the prior considerations and the local uniform Lipschitz character of $u_j^{\pm}$,
$$\limsup_{j\to\infty} \int_{|\{x\in B_R(0) : \mathrm{dist}(x, \{p^{\pm} = 0\}) \leq \varepsilon\}|} |\nabla u_j^{\pm}|^2 + |\nabla p^{\pm}|^2 \leq C_R\varepsilon.
$$
The desired $L^2_{\mathrm{loc}}$ convergence follows from the triangle inequality and the $C^\infty$ convergence of $u_j^{\pm}$ to $p^\pm$ in compact subsets of $\{p^{\pm} > 0\}$.
\end{remark}

\section{Start of the proof of the main theorems}\label{sec:start}

For the remainder of the paper, we let $\Omega^+=\Omega\subset\RR^n$ and $\Omega^-=\RR^n\setminus\overline{\Omega}$ denote a fixed pair of two-sided NTA domains. We assume that $\Omega^\pm$ are unbounded with NTA constants $R_\Omega^\pm=\infty$ and $M_\Omega^\pm$. Furthermore, we assume that $\omega^\pm$ are harmonic measures for $\Omega^\pm$ with pole at infinity such that $\omega^+\ll\omega^-\ll\omega^+$ and the Radon-Nikodym derivative $h=d\omega^-/d\omega^+$ satisfies $\log h\in C^{0,\alpha}(\partial\Omega)$ for some $\alpha>0$. Thus, we are in a regime where the conclusions of Theorems \ref{intro:b} and \ref{intro:c} hold. We adopt all notations and conventions set in \S\ref{s: notation}.

\begin{remark} We will be content to prove Theorems \ref{t:unique} and \ref{t:main} under the assumptions above. When $\omega^\pm$ have finite poles, the proofs go through with only minor modifications: one must restrict estimates to scales where the poles $X^\pm$ are relatively far away from the relevant portion of the boundary. A diligent reader with enough paper will have no difficulty verifying the details.\end{remark}

For each $Q\in \partial\Omega$, we define, as in \cite{engelsteintwophase}, the \emph{jump function} $v^{(Q)}:\RR^n\rightarrow\RR$ by \begin{equation}v^{(Q)}(x) = h(Q)u^+(x)-u^-(x)\quad\text{for all }x\in\RR^n,\end{equation} where $u^\pm$ are Green's functions with pole at infinity corresponding to $\omega^\pm$, extended beyond $\overline{\Omega^\pm}$ by setting $u^\pm\equiv 0$ on $\Omega^\mp$. By definition of the Green's functions (see Proposition \ref{prop:infinity}), we have $\Delta u^\pm=0$ in $\Omega^\pm$ and $u^\pm=0$ on $\partial\Omega$. There is no reason to expect that $v^{(Q)}$ defines a global harmonic function (in fact, it does not unless $\omega^+ = \omega^-$).

Using the machinery of \S\ref{s: notation}, one can show that the pseudoblowups of the functions $v^{(Q)}$ along $\Gamma_d$ are homogeneous harmonic polynomials of degree $d$.

\begin{lemma}\label{l:vblowups} For every sequence $Q_j\in\Gamma_d$ converging to $Q\in\Gamma_d$ and every sequence $r_j\downarrow 0$, there exists a subsequence of $(Q_j,r_j)$, which we relabel, such that the functions $$v_{r_j}^{(Q_j)}(x):=\frac{v^{(Q_j)}(r_jx+Q) r_j^{n-2}}{\omega^-(B(Q_j,r_j))}$$ converge locally uniformly to a homogeneous harmonic polynomial $p$ of degree $d$. Moreover, $\nabla v^{(Q_j)}_{r_j}\stackrel{*}{\rightharpoonup} \nabla p$ with respect to the $L^\infty$ norm and strongly in $L^2_{\mathrm{loc}}$.\end{lemma}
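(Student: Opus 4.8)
The plan is to combine the blowup theorem for harmonic measure (Theorem~\ref{t:blowups}), the classification of pseudotangents along $\Gamma_d$ (Theorem~\ref{homogenouspseudoblowups}), the normalization facts of Remark~\ref{r:ball-norm}, and the gradient convergence discussion of Remark~\ref{r:weakstarconvergence}. First I would apply Theorem~\ref{t:blowups} to the sequence $(Q_j,r_j)$: after passing to a subsequence we obtain unbounded two-sided NTA domains $\Omega^\pm_\infty$, harmonic measures $\omega^\pm_\infty$ with pole at infinity, and Green functions $u^\pm_\infty$ with pole at infinity so that $u^\pm_j \to u^\pm_\infty$ locally uniformly, $\omega^\pm_j \rightharpoonup \omega^\pm_\infty$ vaguely, and $\overline{\Omega^\pm_j}\to\overline{\Omega^\pm_\infty}$ in the Attouch--Wets topology, where $u^\pm_j$ and $\omega^\pm_j$ are the rescalings in \eqref{blowup} (note these use the normalization $\omega^\pm(B(Q_j,r_j))$, while $v^{(Q_j)}_{r_j}$ uses $\omega^-(B(Q_j,r_j))$ in both terms; the two normalizing factors are comparable up to a fixed constant by the mutual absolute continuity and the $L^\infty$ bound on $h$, and in the limit they agree since $\omega^+_\infty=\omega^-_\infty$).

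Next I would identify the limit. By Theorem~\ref{t:vmoblowups} (which applies since $\log h\in C^{0,\alpha}\subset \VMO(d\omega^+)$), the pseudoblowup satisfies $\omega^+_\infty=\omega^-_\infty$ and $u_\infty:=u^+_\infty-u^-_\infty$ is a harmonic polynomial; by Theorem~\ref{homogenouspseudoblowups} and the fact that $Q_j\in\Gamma_d\to Q\in\Gamma_d$, the boundary pseudotangent $\partial\Omega_\infty$ is the zero set of a homogeneous harmonic polynomial of degree $d$, and by the structure of polynomial harmonic measures (cf.\ the discussion around \eqref{greenfunctionatinfinity} and \cite[Theorem~1.4]{BETHarmonicpoly}) one concludes $u_\infty = p$ is itself homogeneous of degree $d$ with $\omega_p = \omega^\pm_\infty$. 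Now I need to relate $v^{(Q_j)}_{r_j}$ to $u^+_j - u^-_j$. Writing things out, $v^{(Q_j)}_{r_j}(x) = h(Q_j)\,\frac{\omega^+(B(Q_j,r_j))}{\omega^-(B(Q_j,r_j))}\,u^+_j(x) - u^-_j(x)$; since $h(Q_j)\to h(Q)$ (continuity of $h$) and $\frac{\omega^+(B(Q_j,r_j))}{\omega^-(B(Q_j,r_j))} \to h(Q)^{-1}$ as a consequence of $\omega^\pm_j$ converging to the same limit $\omega_p$ (this uses $\omega_p(\partial B(0,1))=0$ from Lemma~\ref{l:hball} to pass the ratio of masses of $B(0,1)$ to the limit), the prefactor of $u^+_j$ tends to $1$. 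Hence $v^{(Q_j)}_{r_j}\to p^+ - p^- = p$ locally uniformly, using that $u^\pm_j\to p^\pm$ locally uniformly and are uniformly locally bounded.

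For the gradient statement I would invoke Remark~\ref{r:weakstarconvergence} essentially verbatim: the $u^\pm_j$ are uniformly locally Lipschitz (by the argument of \cite[\S4]{engelsteintwophase}, whose constants depend only on local $L^\infty$ bounds and $\|\log h\|_\infty$), they converge in $C^\infty_{\loc}$ on compact subsets of $\{p^\pm>0\}$ since they are harmonic there, and $|\{x\in B_R(0): \dist(x,\{p^\pm=0\})\le\varepsilon\}|\le C_R\varepsilon$ by analyticity of $p$; splitting $B_R$ into the region near $\{p^\pm=0\}$ (controlled by the Lipschitz bounds and the measure estimate) and its complement (controlled by $C^\infty_{\loc}$ convergence) gives $\nabla u^\pm_j\to\nabla p^\pm$ in $L^2_{\loc}$, hence $\nabla v^{(Q_j)}_{r_j}\to\nabla p$ in $L^2_{\loc}$, which in particular yields weak-$*$ convergence in $L^\infty$ given the uniform Lipschitz bound on $v^{(Q_j)}_{r_j}$.

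The main obstacle I anticipate is the bookkeeping around the two different normalizations (dividing by $\omega^-(B(Q_j,r_j))$ versus the $\omega^\pm(B(Q_j,r_j))$ appearing in \eqref{blowup}) and establishing that the ratio $\omega^+(B(Q_j,r_j))/\omega^-(B(Q_j,r_j))$ genuinely converges to $h(Q)^{-1}$; this requires care because \emph{a priori} the individual quantities $\omega^\pm(B(Q_j,r_j))/r_j^{n-2+d}$ may degenerate (that is precisely the degeneracy discussed in \S\ref{s: WeissMonotonicity}), so one must extract the convergence of the \emph{ratio} from the common vague limit $\omega_p$ of the two normalized measures rather than from any individual limit. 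Everything else is a routine assembly of the cited results. It is worth noting that this lemma does not yet assert uniqueness of the polynomial $p$ (that is Theorem~\ref{t:unique}, proved later); here $p$ may still depend on the chosen subsequence, and only its degree and homogeneity are pinned down.
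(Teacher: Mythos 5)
Your overall architecture coincides with the paper's proof: extract a subsequence via Theorem \ref{t:blowups} and Theorem \ref{homogenouspseudoblowups} so that $u_j^{\pm}\to u_\infty^{\pm}$ locally uniformly with $p=u_\infty^+-u_\infty^-$ a homogeneous harmonic polynomial of degree $d$, rewrite $v^{(Q_j)}_{r_j}=h(Q_j)\,\frac{\omega^+(B(Q_j,r_j))}{\omega^-(B(Q_j,r_j))}\,u_j^+-u_j^-$, show the prefactor tends to $1$, and quote Remark \ref{r:weakstarconvergence} for the gradient statement. The first and last of these items are handled exactly as in the paper.

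However, there is a genuine gap at the step you yourself single out as the main obstacle: the claim that $\omega^+(B(Q_j,r_j))/\omega^-(B(Q_j,r_j))\to h(Q)^{-1}$ ``as a consequence of $\omega_j^{\pm}$ converging to the same limit $\omega_p$, using $\omega_p(\partial B(0,1))=0$ to pass the ratio of masses of $B(0,1)$ to the limit.'' This cannot work as stated: by the normalization in \eqref{blowup} one has $\omega_j^{\pm}(B(0,1))=1$ for every $j$, so the fact that both rescaled measures converge vaguely to the same $\omega_p$ carries no information about the ratio of the \emph{unnormalized} masses --- the rescaling erases precisely the quantity you are trying to recover, and ``passing the ratio of masses of $B(0,1)$ to the limit'' only yields $1=1$. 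The correct (and short) argument, which is what the paper does, bypasses the blowup limit entirely: since $d\omega^-=h\,d\omega^+$, we have $\frac{\omega^-(B(Q_j,r_j))}{\omega^+(B(Q_j,r_j))}=\fint_{B(Q_j,r_j)}h\,d\omega^+$, and because $h$ is continuous at $Q$, $Q_j\to Q$ and $r_j\downarrow 0$, this average converges to $h(Q)=\lim_j h(Q_j)$; positivity of $h(Q)$ (from $\log h\in C(\partial\Omega)$) then bounds the reciprocal ratio and makes the prefactor of $u_j^+$ converge to $1$. (If you insist on exploiting $\omega_\infty^+=\omega_\infty^-$, you would still have to test the identity $d\omega_j^-=h(r_j\cdot+Q_j)\,\frac{\omega^+(B(Q_j,r_j))}{\omega^-(B(Q_j,r_j))}\,d\omega_j^+$ against a nonnegative test function with positive $\omega_p$-integral, i.e.\ again invoke the Radon--Nikodym relation and the continuity of $h$, not merely the coincidence of the two limits.) With this step repaired, the rest of your proposal is correct and matches the paper's proof; in particular, as you correctly note, no control on the individual densities $\omega^{\pm}(B(Q_j,r_j))/r_j^{n-2+d}$ is needed here, and the lemma pins down only the degree and homogeneity of the subsequential limit, not its uniqueness.
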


\begin{proof} The lemma is valid under the assumption $\log h\in C(\partial\Omega)$. Let $Q_j$ be a sequence in $\Gamma_d$ converging to $Q$ in $\Gamma_d$ and let $r_j$ be a sequence of positive numbers converging to $0$. Because $h$ is continuous at $Q$ and $\excess(B(Q_j,r_j),\{Q\})\rightarrow 0$ as $j\rightarrow\infty$, \begin{equation*}\limsup_{j\rightarrow\infty} \left|\frac{\omega^-(B(Q_j,r_j))}{\omega^+(B(Q_j,r_j))}-h(Q)\right|
\leq \limsup_{j\rightarrow\infty} \dashint_{B(Q_j,r_j)} |h(x)-h(Q)|d\omega^+(x) =0\end{equation*} by the Radon-Nikodym theorem. Thus, \begin{equation}\label{eq:vblowup1} \lim_{j\rightarrow\infty} \frac{\omega^-(B(Q_j,r_j))}{\omega^+(B(Q_j,r_j))}=h(Q)=\lim_{j\rightarrow\infty} h(Q_j).\end{equation} Moreover, because $\log h$ is continuous, we have $h(Q)>0$ and \begin{equation}\label{eq:vblowup2} \sup_{j\geq 1} \frac{\omega^+(B(Q_j,r_j))}{\omega^-(B(Q_j,r_j))}<\infty.\end{equation} By Theorem \ref{t:blowups} and Theorem \ref{homogenouspseudoblowups}, there exists a subsequence of $(Q_j,r_j)$, which we relabel, such that the functions $$u_j^\pm(x) := \frac{u^\pm(r_jx+Q_j)r_j^{n-2}}{\omega^\pm(B(Q_j,r_j))}$$ converge locally uniformly to functions $u^\pm_\infty$ such that $p:=u^+_\infty-u^-_\infty$ is a homogeneous harmonic polynomial of degree $d$. By \eqref{eq:vblowup2}, it follows that $$ \frac{u^+(r_jx+Q_j)r_j^{n-2}}{\omega^-(B(Q_j,r_j))}$$ converges locally uniformly, as well. Therefore, by \eqref{eq:vblowup1}, \begin{align*}\lim_{j\rightarrow\infty} v_{r_j}^{(Q_j)}(x) &= \lim_{j\rightarrow\infty} \left( \frac{h(Q_j)u^+(r_jx+Q_j)r_j^{n-2}}{\omega^-(B(Q_j,r_j))} - \frac{u^-(r_jx+Q_j)r_j^{n-2}}{\omega^-(B(Q_j,r_j))}\right)\\
&= \lim_{j\rightarrow\infty}\left( \frac{u^+(r_jx+Q_j)r_j^{n-2}}{\omega^+(B(Q_j,r_j))} - \frac{u^-(r_jx+Q_j)r_j^{n-2}}{\omega^-(B(Q_j,r_j))}\right)= u^+_\infty(x)-u^-_\infty(x)=p(x), \end{align*} where the limits converge locally uniformly.

The statement regarding weak-star convergence is proven (in greater generality) in Remark \ref{r:weakstarconvergence}.
\end{proof}

\section{Almost monotonicity via Almgren's frequency functional}\label{formulasandcomputations}

\begin{definition}\label{almgrenfrequency} For all $f \in W^{1,2}_{\mathrm{loc}}(\R^n)$ and $Q\in\R^n$ such that $f(Q)=0$, define $$H(r, Q, f) := \int_{\partial B(Q,r)} f^2,\quad D(r,Q,f) := \int_{B(Q,r)} |\nabla f|^2,\quad N(r,Q,f) := \frac{rD(r,Q,f)}{H(r,Q,f)}.$$ The function $N(r,Q,f)$ is called \emph{Almgren's frequency functional}.
\end{definition}

\begin{remark}\label{rk:almgren} Almgren \cite{almgrenbigregularity} proved that when $f$ is harmonic, $N(\cdot, Q, f)$ is absolutely continuous and monotone increasing. In this case, $$N(0, Q, f):=\lim_{r\downarrow 0} N(r,Q,f)$$ is an integer and is the order to which $f$ vanishes at $Q$. If $p$ is a homogenous harmonic polynomial of degree $d$, then $N(r, 0, p) = d$ for all $r>0$.\end{remark}

We continue to adopt the notation set in \S\ref{sec:start}. In addition, given $Q\in \partial \Omega$, $r>0$, and $E\subset\partial\Omega$, we write
\begin{equation}\label{eqn-tt-4.0}
v^{(Q)}_{r}(x) :=  \frac{v^{(Q)}(rx + Q) r^{n-2}}{\omega^-(B(Q,r))}\quad\text{and}\quad \omega_{r,Q}^-(E) := \frac{\omega^-(rE + Q)}{\omega^-(B(Q,r))}.
\end{equation}
Choose a compact set $K\subset\partial\Omega$.  The estimates carried out below are uniform over $Q\in K$. To ease notation, we assume throughout this section that $0\in K$ and write $v:=v^{(0)}$ and
$\omega^-_{r, Q}=\omega^-_{r}$.

Ultimately, we would like to estimate $(d/dr)N(r, 0, v)$, but reach an immediate technical difficulty in that $v$ is merely Lipschitz, so $\nabla v$ is not defined everywhere. To address this, we work instead with the family of regularizations $v_\varepsilon = v*\varphi_\varepsilon$ ($0<\varepsilon\ll 1$), where $\varphi\ge 0$ is a $C^\infty$ approximation to the identity (i.e.~$\spt\varphi \subset B(0,1)$ and $\int_{\R^n} \varphi = 1$.) We abbreviate $H(r, 0, v_\varepsilon)$, $D(r,0,v_\varepsilon)$, and $N(r,0,v_\varepsilon)$ by $H_\varepsilon(r)$, $D_\varepsilon(r)$, and $N_\varepsilon(r)$, respectively.

\begin{remark}\label{convolutionunderblowup} For reference, let us recall how convolution behaves under a simple change of variables. For all $\varepsilon > 0$, $r > 0$, and $x, Q\in \mathbb R^n$, if $f_r(x) := f(rx + Q)$, then \begin{equation*}
(\varphi_\varepsilon *f)(rx+Q) = (\varphi_{\varepsilon/r}*f_r)(x)
\end{equation*}
\end{remark}

We now record several basic computations related to Almgren's frequency functional applied to the functions $v_\varepsilon$ and $v^{(Q)}$ (we note that \eqref{eq52}, \eqref{eq53}, and \eqref{eq54} appear, if not explicitly, then in spirit in \cite{almgrenbigregularity}).

\begin{lemma}\label{basicalmgrenfacts}
\begin{align} \label{eq52}
&D_\varepsilon(r) = \int_{\partial B(0,r)}v_\varepsilon(v_\varepsilon)_\nu\, d\sigma - \int_{B(0,r)} v_\varepsilon \Delta v_\varepsilon\\ \label{eq53}
&\frac{d}{dr}D_\varepsilon(r) = \frac{n-2}{r}\int_{B(0,r)}|\nabla v_\varepsilon|^2\,dx + 2\int_{\partial B(0,r)} (v_\varepsilon)_\nu^2 - \frac{2}{r}\int_{B(0,r)} \left\langle x, \nabla v_\varepsilon\right\rangle \Delta v_\varepsilon\, dx \\ \label{eq54}
&\frac{d}{dr}H_\varepsilon(r) = \frac{n-1}{r}H_\varepsilon(r) + 2\int_{\partial B(0,r)} v_\varepsilon (v_\varepsilon)_\nu\, d\sigma\\
\label{eq51}
&\lim_{r\downarrow 0} N(r,Q,v^{(Q)}) = d\quad\text{for all }Q\in \Gamma_d
\end{align}
\end{lemma}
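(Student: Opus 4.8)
\textbf{Proof proposal for \eqref{eq51}.}

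The plan is to exploit the pseudoblowup analysis already assembled in Lemma~\ref{l:vblowups} together with the scaling behavior of Almgren's frequency functional. The key observation is that $N(r,Q,f)$ is scale invariant in the following sense: if $v^{(Q)}_r(x) = v^{(Q)}(rx+Q)r^{n-2}/\omega^-(B(Q,r))$, then a change of variables in the integrals defining $H$, $D$, and $N$ gives $N(s,0,v^{(Q)}_r) = N(sr,Q,v^{(Q)})$ for all $s,r>0$; the normalizing constants $r^{n-2}/\omega^-(B(Q,r))$ cancel since they appear quadratically in both numerator and denominator. Hence computing $\lim_{r\downarrow 0}N(r,Q,v^{(Q)})$ is the same as understanding $N(1,0,v^{(Q)}_r)$ as $r\downarrow 0$.

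First I would fix $Q\in\Gamma_d$ and take an arbitrary sequence $r_j\downarrow 0$. By Lemma~\ref{l:vblowups} (applied with the constant sequence $Q_j\equiv Q$), there is a subsequence, not relabeled, along which $v^{(Q)}_{r_j}\to p$ locally uniformly for some homogeneous harmonic polynomial $p$ of degree $d$, and moreover $\nabla v^{(Q)}_{r_j}\to\nabla p$ strongly in $L^2_{\loc}$. The $L^2_{\loc}$ convergence of the gradients gives $D(1,0,v^{(Q)}_{r_j})\to D(1,0,p)$, and the local uniform convergence gives $H(1,0,v^{(Q)}_{r_j})\to H(1,0,p)$ (the boundary integral over $\partial B(0,1)$ passes to the limit by uniform convergence on the compact set $\overline{B(0,1)}$). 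Since $p$ is a nonzero homogeneous harmonic polynomial of degree $d$ we have $H(1,0,p)>0$, so the quotient is continuous under these limits and $N(1,0,v^{(Q)}_{r_j})\to N(1,0,p) = d$ by Remark~\ref{rk:almgren}. Because every sequence $r_j\downarrow 0$ has a subsequence along which $N(1,0,v^{(Q)}_{r_j})\to d$, the full limit $\lim_{r\downarrow 0}N(1,0,v^{(Q)}_r)$ exists and equals $d$, and rescaling back yields $\lim_{r\downarrow 0}N(r,Q,v^{(Q)}) = d$.

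Two small points need care, though neither is a serious obstacle. The first is that $v^{(Q)}$ is only Lipschitz, so one should either check that the integrals defining $H,D,N$ still make sense and transform correctly under scaling at the Lipschitz level (they do, since $\nabla v^{(Q)}\in L^\infty_{\loc}$ and the trace on spheres is continuous), or run the argument through the regularizations $v_\varepsilon$ introduced above and let $\varepsilon\downarrow 0$ at fixed scale. The second, and the part I expect to require the most attention, is confirming that the denominator does not degenerate, i.e.\ that $H(1,0,v^{(Q)}_{r_j})$ stays bounded away from $0$: this is exactly where one uses that the blowup limit $p$ is a \emph{nonzero} polynomial of degree precisely $d$, which is guaranteed by Lemma~\ref{l:vblowups} together with the normalization in Remark~\ref{r:ball-norm} (so $p$ cannot vanish identically), and this in turn relies on the non-degeneracy of $\omega^-(B(Q_j,r_j))$ encoded in Lemma~\ref{growthofballsatsingularpoints}. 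Once non-degeneracy of $H$ is in hand the quotient limit is routine.
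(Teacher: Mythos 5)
Your argument for \eqref{eq51} is correct and is essentially the paper's own: exploit the scale invariance $N(r_j,Q,v^{(Q)})=N(1,0,v^{(Q)}_{r_j})$ (the normalizing factor cancels because it enters $H$ and $D$ quadratically), apply Lemma \ref{l:vblowups} to extract a subsequence with $v^{(Q)}_{r_j}\to p$ locally uniformly and $\nabla v^{(Q)}_{r_j}\to\nabla p$ in $L^2_{\loc}$, and conclude $N(1,0,v^{(Q)}_{r_j})\to N(1,0,p)=d$ by Remark \ref{rk:almgren}; your explicit remarks that $H(1,0,p)>0$ and that the limit value $d$ is independent of the subsequence (so the full limit exists) are points the paper leaves implicit, and they are welcome. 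The one shortfall is that the statement you were asked to prove is the whole lemma, and you address only \eqref{eq51}: the identities \eqref{eq52}, \eqref{eq53}, and \eqref{eq54} are also part of the claim. They are routine --- \eqref{eq52} is integration by parts for the regularized (hence smooth) function $v_\varepsilon$, and \eqref{eq53}, \eqref{eq54} follow by differentiating after the change of variables $y=x/r$, which is exactly how the paper dispatches them --- but a complete proof should at least record these computations rather than omit them.
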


\begin{proof}
Equation \eqref{eq52} follows from integration by parts. Equations \eqref{eq53} and \eqref{eq54} can be derived using the change of variables $y = x/r$. To establish \eqref{eq51}, assume that $Q\in\Gamma_d$ and pick any sequence $r_j \downarrow 0$. Then $$N(r_j, Q, v^{(Q)}) = \frac{\int_{B(0,1)} |\nabla v^{(Q)}_{r_j}|^2}{\int_{\partial B(0,1)} (v_{r_j}^{(Q)})^2}.$$ By Lemma \ref{l:vblowups}, we can pass to a subsequence of $r_j$, which we relabel, such that $v_{r_j}^{(Q)}$ converges locally uniformly to a homogeneous harmonic polynomial $p$, and moreover, since $\nabla v_{r_j}^{(Q)} \rightarrow \nabla p$ in $L^2_{\mathrm{loc}}$ then
$\int_{B(0,1)}|\nabla v_{r_j}^{(Q)}|^2 \rightarrow \int_{B(0,1)}|\nabla p|^2$  (see Remark \ref{r:weakstarconvergence}). It follows that $$\lim_{j\rightarrow \infty} N(r_j, Q,v^{(Q)}) = \lim_{j\rightarrow \infty} N(1, 0, v^{(Q)}_{r_j}) = N(1, 0, p)=d$$ by Remark \ref{rk:almgren}.
\end{proof}

The following lemma is a variation on \cite[Lemma 5.6]{engelsteintwophase}, which assumed $0\in\Gamma_1$. To extend this result to a general boundary point, we give an alternative ``rescaling argument''. In the remainder of this section, constants labeled $c$ or $C$ may depend on the NTA constants of $\Omega$, a choice of a compact set $K\subset\partial\Omega$, and $\|\log(h)\|_{C^{0,\alpha}(K)}$, but not on $u, Q, r, \varepsilon$.

\begin{lemma}\label{almostmonotonic}
Assume that $R>0$ and $\varepsilon\ll R$. There exists a function ${\bf E}(R, \varepsilon)$ such that
\begin{eqnarray}
 N_\varepsilon(R) + {\bf E}(R, \varepsilon)(R-r) &\geq& N_\varepsilon(r)\quad\text{for all }R/4 < r < R,\;\\
{\bf E}(R,\varepsilon)R &\leq& kR^{\alpha},
\end{eqnarray}
where $k > 0$ is a constant independent of $\varepsilon, R$.
\end{lemma}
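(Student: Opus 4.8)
## Proof Proposal

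The plan is to establish an almost-monotonicity formula for Almgren's frequency functional $N_\varepsilon(r) = rD_\varepsilon(r)/H_\varepsilon(r)$ by differentiating in $r$ and carefully tracking the error terms that arise from the fact that $v$ is not harmonic. The starting point is the logarithmic derivative identity
\begin{equation*}
\frac{d}{dr}\log N_\varepsilon(r) = \frac{1}{r} + \frac{D_\varepsilon'(r)}{D_\varepsilon(r)} - \frac{H_\varepsilon'(r)}{H_\varepsilon(r)}.
\end{equation*}
Using the formulas \eqref{eq52}, \eqref{eq53}, \eqref{eq54} from Lemma \ref{basicalmgrenfacts}, the main (good) part of the right-hand side assembles into a nonnegative Rellich-Nečas/Cauchy--Schwarz combination of the form $\frac{2}{H_\varepsilon}\left(\int_{\partial B_r}(v_\varepsilon)_\nu^2 \cdot \int_{\partial B_r} v_\varepsilon^2 - \left(\int_{\partial B_r} v_\varepsilon (v_\varepsilon)_\nu\right)^2\right)\big/(\text{something positive})$, which is $\geq 0$ by Cauchy--Schwarz — this is exactly the computation that proves monotonicity of $N$ for harmonic functions. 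The deviation from harmonicity contributes the extra terms $-\frac{1}{D_\varepsilon}\left(\int_{B_r} v_\varepsilon\Delta v_\varepsilon\right)'$-type and $-\frac{2}{rD_\varepsilon}\int_{B_r}\langle x,\nabla v_\varepsilon\rangle\Delta v_\varepsilon$, together with the $\int_{B_r} v_\varepsilon \Delta v_\varepsilon$ term hidden in \eqref{eq52}. So the proof reduces to bounding these three error integrals.

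The key estimate is \eqref{e:boundondistderivative}: $|\Delta v^{(Q)}| \leq C|x-Q|^\alpha\, d\omega^-|_{\partial\Omega}$ (with $Q=0$ here by our normalization). After mollification, $\Delta v_\varepsilon = (\Delta v)*\varphi_\varepsilon$, so for $x$ with $|x|\sim r \gg \varepsilon$ one has $|\Delta v_\varepsilon(x)| \lesssim r^\alpha (\omega^-*\varphi_\varepsilon)(x)$ roughly supported near $\partial\Omega\cap B(0,r+\varepsilon)$. Thus each error integral is controlled, modulo Lipschitz bounds on $v_\varepsilon$ and $\nabla v_\varepsilon$ (which hold uniformly by Remark \ref{r:weakstarconvergence} and the local Lipschitz bounds from \cite{engelsteintwophase}), by a constant times
\begin{equation*}
r^\alpha \cdot \frac{\omega^-(B(0, r + \varepsilon))}{D_\varepsilon(r)} \quad\text{or}\quad r^{\alpha-1}\cdot\frac{\omega^-(B(0,r+\varepsilon)) \cdot (\text{normalizing size})}{D_\varepsilon(r)}.
\end{equation*}
To turn this into the stated bound ${\bf E}(R,\varepsilon)R \leq kR^\alpha$, I would use the ``rescaling argument'' mentioned in the statement: rescale to unit scale via $v_R^{(Q)}$ as in \eqref{eqn-tt-4.0}, where by Remark \ref{r:ball-norm} and Lemma \ref{l:vblowups} the denominator $D_{\varepsilon/R}(1)$ for the rescaled function is bounded below by a uniform positive constant (since the blowups are homogeneous harmonic polynomials $p$ of degree $d$ with $\omega_p(B(0,1))=1$, so $\int_{B_1}|\nabla p|^2 \gtrsim 1$), while $\omega^-(B(0,R+\varepsilon))/\omega^-(B(0,R)) \to 1$. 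Under this rescaling $\Delta(v_R^{(Q)})$ picks up the factor $R^\alpha$ from \eqref{e:boundondistderivative} (the $|x-Q|^\alpha$ becomes $R^\alpha|x|^\alpha$), and the $\omega^-$-mass normalizes to $O(1)$. Integrating the logarithmic-derivative inequality $\frac{d}{dr}N_\varepsilon(r) \geq -{\bf E}(R,\varepsilon)$ over $(r,R)$ with $R/4 < r < R$ then yields $N_\varepsilon(R) + {\bf E}(R,\varepsilon)(R-r) \geq N_\varepsilon(r)$, with ${\bf E}(R,\varepsilon)R \lesssim R^\alpha$ uniformly in $\varepsilon \ll R$ and $Q \in K$.

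The main obstacle I anticipate is two-fold. First, one must confirm the denominator $D_\varepsilon(r)$ (equivalently $H_\varepsilon(r)$, via a lower frequency bound) does not degenerate as $\varepsilon \downarrow 0$ or as $r$ ranges over $(R/4, R)$ — this is where Lemma \ref{growthofballsatsingularpoints} and the $\omega_p(B(0,1))=1$ normalization are essential, and care is needed because at this stage we do not yet know $\lim_{r\downarrow 0}\omega^-(B(0,r))/r^{n-2+d}$ exists (that is proved later, using precisely this lemma). Second, the mollification bookkeeping — controlling $\langle x,\nabla v_\varepsilon\rangle \Delta v_\varepsilon$ and the fact that $\Delta v_\varepsilon$ is a smeared-out measure rather than an $L^\infty$ function — requires keeping $\varepsilon$ genuinely smaller than $R$ and absorbing boundary-layer contributions of width $\varepsilon$; these vanish in the relevant limits but must be handled so that the constant $k$ is independent of $\varepsilon$. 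Both difficulties are essentially the ``degeneracy'' issue flagged in the introduction, here in its mildest form: we only need a bound growing no faster than $R^\alpha$, not the existence of a limit.
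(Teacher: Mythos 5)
Your overall strategy is the same as the paper's: isolate the nonnegative Cauchy--Schwarz part of $N_\varepsilon'$, treat the remaining terms as errors driven by $\Delta v_\varepsilon$, bound $\Delta v$ via \eqref{e:boundondistderivative}, and use the uniform Lipschitz character of the rescaled functions together with doubling of $\omega^-$ so that, after rescaling, the error is of size $R^{\alpha-1}$. The substantive difference is how you control the denominator. The paper works directly with the identity $H_\varepsilon^2 N_\varepsilon' = (\text{Cauchy--Schwarz term}) + (\text{errors})$ from \cite[(5.4)]{engelsteintwophase} and uses a CFMS-type lower bound $H_\varepsilon(r) \geq c\,\omega^-(B(0,r))^2/r^{n-3}$, valid for \emph{every} boundary point and every $\varepsilon < r/100$ (\cite[Lemma 5.5]{engelsteintwophase}); since each error integral carries exactly the factor $r^\alpha\,\omega^-(B(0,r))^2/r^{n-2}$, the harmonic-measure normalization cancels identically and one gets $(N_\varepsilon')^- \leq Cr^{\alpha-1}$ with no blowup classification and, in particular, with no use of Lemma \ref{growthofballsatsingularpoints}. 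Your alternative --- lower-bounding the rescaled Dirichlet energy by compactness of blowups (Lemma \ref{l:vblowups}, Remark \ref{r:ball-norm}) --- can be made to work, but buys nothing and costs generality: Lemma \ref{l:vblowups} as stated requires base points in $\Gamma_d$, while this lemma is applied over arbitrary compact $K\subset\partial\Omega$ (one would have to run the argument with Theorem \ref{t:vmoblowups} and degree-$\leq d_0$ limits instead).

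Two soft spots in your write-up need attention. First, your compactness argument lower-bounds $\int_{B_1}|\nabla v_r^{(Q)}|^2$ for the \emph{unmollified} function in the limit $r\downarrow 0$; to use $D_\varepsilon$ as your denominator you must transfer this to the mollification uniformly in $\varepsilon$ and over the whole range $r\in(R/4,R)$, and $\nabla v_\varepsilon\to\nabla v$ in $L^2_{\mathrm{loc}}$ carries no rate (unlike $|v_\varepsilon-v|\lesssim \mathrm{Lip}\cdot\varepsilon$, which is why the paper's $H_\varepsilon$ bound is the robust choice). This is fixable but is an extra argument you do not supply. Second, passing from your logarithmic-derivative inequality to the stated bound $\frac{d}{dr}N_\varepsilon \geq -\mathbf{E}(R,\varepsilon)$ requires, in addition, a uniform upper bound on $N_\varepsilon(r)$ (available from the same $H_\varepsilon$ lower bound plus the Lipschitz bound on $D_\varepsilon$), which you do not mention. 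Finally, your appeal to Lemma \ref{growthofballsatsingularpoints} here is a red herring: the whole point of working with Almgren's frequency at this stage is that the $\omega^-(B(0,r))$ factors cancel exactly, and if you actually replaced that cancellation by the power bounds of Lemma \ref{growthofballsatsingularpoints} you would lose an arbitrary $\delta$ in the exponent, weakening $kR^\alpha$ to $kR^{\alpha-\delta}$.
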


\begin{proof}
Define ${\bf E}(R, \varepsilon) \colonequals \sup_{R/4 < r < R} (N_\varepsilon(r)')^-$, where prime denotes the derivative in $r$. The first claim is immediate.

For the second claim, recall the following formula for $N'_\varepsilon(r)$ from \cite[(5.4)]{engelsteintwophase}:
\begin{equation}\label{e: derivativeofN}\begin{aligned}
H_\varepsilon^2(r)N'_{\varepsilon}(r) &= 2r\left(\int_{\partial B_r}(v_\varepsilon)_\nu^2d\sigma \int_{\partial B_r} v_\varepsilon^2d\sigma -\left[\int_{\partial B_r} v_\varepsilon (v_\varepsilon)_\nu d\sigma\right]^2\right)\\ &\quad+ 2r\int_{B_r} v_\varepsilon \Delta v_\varepsilon dx \int_{\partial B_r} v_\varepsilon (v_\varepsilon)_\nu d\sigma -  2H_\varepsilon(r) \int_{B_r} \left\langle x, \nabla v_\varepsilon\right\rangle \Delta v_\varepsilon dx.
\end{aligned}
\end{equation} The difference in parenthesis on the right hand side of \eqref{e: derivativeofN} is positive by the Cauchy-Schwartz inequality. Therefore, \begin{equation}\label{e: estimatingnprime}(N'_{\varepsilon}(r))^- \leq 2\left|\frac{\int_{B_r} \left\langle x, \nabla v_\varepsilon\right\rangle \Delta v_\varepsilon dx}{H_{\varepsilon}(r)}\right| + 2\left|\frac{r\int_{B_r} v_\varepsilon \Delta v_\varepsilon dx \int_{\partial B_r} v_{\varepsilon} (v_{\varepsilon})_\nu d \sigma }{H_{\varepsilon}(r)^2}\right|.\end{equation}
Using the Caffarelli-Fabes-Mortola-Salsa (CFMS) type estimate on NTA domains (see \cite[Lemma 4.8]{jerisonandkenig}), it can be shown that if $\varepsilon < r/100$, then \begin{equation}\label{eq:lowerboundonH}H_\varepsilon(r) > c\frac{\omega^-(B(0,r))^2}{r^{n-3}}\end{equation} for some constant $c > 0$ independent of $r$ and $\varepsilon$. For details, see \cite[Lemma 5.5]{engelsteintwophase}.

We now estimate the two error terms on the right hand side of \eqref{e: estimatingnprime}. \smallskip

\noindent {\bf (A) Estimating $\frac{\int_{B_r} \left\langle x, \nabla v_\varepsilon\right\rangle \Delta v_\varepsilon dx}{H_{\varepsilon}(r)}$}: Since $\Delta v_\varepsilon = (\Delta v)*\varphi_\varepsilon$ in the sense of distributions, we can move the convolution from one term to the other:  $$\int_{B_r} \left\langle x, \nabla v_\varepsilon\right\rangle \Delta v_\varepsilon\, dx = \int  [(\chi_{B_r}(x)\left\langle x, \nabla v_\varepsilon \right\rangle)*\varphi_\varepsilon] \Delta  v\, dx.$$ Evaluate $\Delta v$, as in \eqref{distributionderivative}, to obtain \begin{equation}\label{e: estimatingEr} \begin{aligned}\left|\int_{B_r} \left\langle x, \nabla v_\varepsilon\right\rangle \Delta v_\varepsilon dx\right|=& \left|\int (\chi_{B_r}(x)\left\langle x, \nabla (v*\varphi_\varepsilon)(x) \right\rangle)*\varphi_\varepsilon \left(\frac{h(0)}{h(x)}-1\right)d\omega^-\right|\\
\leq& C r^{1+\alpha} \int (\chi_{B_r}(x)|\nabla (v*\varphi_\varepsilon)|)*\varphi_\varepsilon d\omega^-,\\
\end{aligned}\end{equation} where the last inequality follows from our assumption $\log(h) \in C^\alpha$ and the fact that $|x| < C(r + \varepsilon) < 2Cr$ on the domain of integration.

If $x = ry$, then $\nabla_x v(x) = \frac{1}{r}\nabla_y v(ry) = \frac{\omega^-(B(0,r))}{r^{n-1}}\nabla_y v_r(y)$. Changing variables, \eqref{e: estimatingEr} becomes \begin{equation}\begin{aligned}\label{e: estimatingEr2}\left|\int_{B_r} \left\langle x, \nabla v_\varepsilon\right\rangle \Delta v_\varepsilon dx\right| \leq& Cr^{1+\alpha} \frac{\left(\omega^-(B(0,r))\right)^2}{r^{n-1}} \int (\chi_{B_1}(y) |\nabla v_r*\varphi_{\varepsilon/r}|)*\varphi_{\varepsilon/r} d\omega^-_r\\
\leq& Cr^{\alpha} \frac{\left(\omega^-(B(0,r))\right)^2}{r^{n-2}} \omega^-_r(B(0, 1+\varepsilon/r))\\ \leq& \tilde{C}r^{\alpha} \frac{\left(\omega^-(B(0,r))\right)^2}{r^{n-2}}.\end{aligned}\end{equation} The penultimate inequality in \eqref{e: estimatingEr2} holds, because the $v_r$ are uniformly Lipschitz, and the last inequality holds, because $1+\varepsilon/r < 2$ and $\omega^-_r(B(0,2))$ is bounded uniformly in $r$.

Together, the upper bound \eqref{e: estimatingEr2} and the lower bound \eqref{eq:lowerboundonH} imply $$2\left|\frac{\int_{B_r} \left\langle x, \nabla v_\varepsilon\right\rangle \Delta v_\varepsilon dx}{H_{\varepsilon}(r)}\right| \leq Cr^{\alpha-1}.$$%
\smallskip

\noindent {\bf (B) Estimating $2r\int_{B_r} v_\varepsilon \Delta v_\varepsilon dx \int_{\partial B_r} v_{\varepsilon} (v_{\varepsilon})_\nu d \sigma$}: Arguing as in \eqref{e: estimatingEr} and \eqref{e: estimatingEr2}, we have \begin{equation}\label{e: estimateproductterm1} \begin{aligned}
\left|\int_{B_r} v_\varepsilon \Delta v_\varepsilon dx\right| &\leq Cr^\alpha \frac{\omega^-(B(0,r))^2}{r^{n-2}} \int \left|(\chi_{B_1}(v_r*\varphi_{\varepsilon/r}))*\varphi_{\varepsilon/r}\right| d\omega^-_r\\
&\leq Cr^\alpha \frac{\omega^-(B(0,r))^2}{r^{n-2}} \omega^-_r(B(0,1+\varepsilon/r)) \leq Cr^\alpha \frac{\omega^-(B(0,r))^2}{r^{n-2}}.
\end{aligned}
\end{equation}
Similarly, we can estimate \begin{equation}\label{e: estimateproductterm2}\begin{aligned}
\left|\int_{\partial B_r} v_{\varepsilon} (v_{\varepsilon})_\nu d \sigma\right| &= \frac{\omega^-(B(0,r))^2}{r^{2n-3}} r^{n-1} \int_{\partial B_1}\left| (v_r *\varphi_{\varepsilon/r})(\nabla v_r*\varphi_{\varepsilon/r})\cdot \nu\right| d\sigma\\
&\leq C\frac{\omega^-(B(0,r))^2}{r^{n-2}}.
\end{aligned}
\end{equation}
The lower bound \eqref{eq:lowerboundonH} combined with \eqref{e: estimateproductterm1} and \eqref{e: estimateproductterm2} yields $$\left|\frac{2r\int_{B_r} v_\varepsilon \Delta v_\varepsilon dx \int_{\partial B_r} v_{\varepsilon} (v_{\varepsilon})_\nu d \sigma }{H_{\varepsilon}(r)^2}\right| \leq Cr^{\alpha-1}.$$ This completes the proof of the lemma.
\end{proof}

We are ready to estimate the growth of $N(r)$. Compare Theorem \ref{t: growthrateforW} with Remark \ref{rk:almgren}.

\begin{theorem}[almost monotonicity]\label{t: growthrateforW} For all compact sets $K\subset \partial\Omega$, there exists $C>0$ such that for all $Q\in K$ and $0<r\leq 1$, \begin{equation}\label{e:growthforN} \frac{1}{r}\left(N(r, Q, v^{(Q)}) -N(0, Q, v^{(Q)})\right)  > -Cr^{\alpha-1},\end{equation}
where
\begin{equation}\label{eqn-tt-N}
N(0, Q, v^{(Q)})=\lim_{r\downarrow 0} N(r,Q,v^{(Q)}).
\end{equation}
\end{theorem}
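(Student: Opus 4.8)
The plan is to upgrade the pointwise almost-monotonicity estimate for $N_\varepsilon$ from Lemma \ref{almostmonotonic} into a uniform control on the total drop of $N(r,Q,v^{(Q)})$ on $(0,1]$, and then pass $\varepsilon\downarrow 0$. First I would fix a compact set $K\subset\partial\Omega$, a point $Q\in K$, and reduce to $Q=0$ with $v=v^{(0)}$ as in the running conventions of \S\ref{formulasandcomputations}. For a dyadic scale $R_m=2^{-m}$ with $R_m\leq 1$, apply Lemma \ref{almostmonotonic} on the annulus $R_m/4<r<R_m$ (equivalently on $(R_{m+2},R_m)$): for $\varepsilon$ small relative to $R_m$ we get $N_\varepsilon(r)\geq N_\varepsilon(R_m)-{\bf E}(R_m,\varepsilon)(R_m-r)$ with ${\bf E}(R_m,\varepsilon)R_m\leq kR_m^\alpha$, i.e. the derivative of $N_\varepsilon$ is bounded below by $-C R_m^{\alpha-1}$ on that annulus, uniformly in $\varepsilon$. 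Telescoping over $m$ and summing the geometric series $\sum_m R_m^{\alpha-1}\cdot R_m \sim \sum_m R_m^{\alpha}$, which converges since $\alpha>0$, yields for each $\varepsilon$ and each $0<r\leq 1$ the bound $N_\varepsilon(r)\leq N_\varepsilon(1)+C\!\sum_{R_m\geq r}R_m^\alpha$, and more precisely the quantitative form $\tfrac{1}{r}\big(N_\varepsilon(r)-\liminf_{s\downarrow 0}N_\varepsilon(s)\big)>-Cr^{\alpha-1}$ by redoing the telescoping starting from scale $r$ rather than from $1$. The constant $C$ depends only on the NTA constants of $\Omega$, on $K$, and on $\|\log h\|_{C^{0,\alpha}(K)}$, not on $\varepsilon$, $Q$, or $r$.

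Next I would pass to the limit $\varepsilon\downarrow 0$. Since $v$ is Lipschitz, $v_\varepsilon\to v$ in $W^{1,2}_{\loc}$ and on spheres $\partial B_r$ for a.e.\ $r$, so $H_\varepsilon(r)\to H(r,0,v)$ and $D_\varepsilon(r)\to D(r,0,v)$ for a.e.\ $r$, and by the lower bound \eqref{eq:lowerboundonH} (which survives in the limit because $v$ does not vanish identically near $0$ — it blows up to a nonzero $d$-homogeneous polynomial by Lemma \ref{l:vblowups}) the denominators stay bounded below; hence $N_\varepsilon(r)\to N(r,0,v)$ for a.e.\ $r\in(0,1]$. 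The uniform $\varepsilon$-independent lower bound therefore transfers to $N(r,Q,v^{(Q)})$ for a.e.\ $r$, and then for every $r$ by continuity of $r\mapsto N(r,Q,v^{(Q)})$ away from $\varepsilon$-regularization (or simply by taking the a.e.\ statement and noting both sides are continuous in $r$ on $(0,1]$, since $v$ being harmonic off $\partial\Omega$ and the Lipschitz bound make $H$ and $D$ continuous). The existence of the limit in \eqref{eqn-tt-N} then follows: the function $r\mapsto N(r,Q,v^{(Q)})+Cr^\alpha/\alpha$ (or $+C'r^\alpha$ for a suitable $C'$) is monotone nondecreasing on $(0,1]$ by the differential inequality just established, hence has a limit as $r\downarrow 0$, and subtracting $Cr^\alpha/\alpha\to0$ shows $\lim_{r\downarrow 0}N(r,Q,v^{(Q)})$ exists; by \eqref{eq51} of Lemma \ref{basicalmgrenfacts} this limit equals $d$ when $Q\in\Gamma_d$, which also confirms the denominators are nondegenerate and makes the limit in \eqref{eqn-tt-N} a genuine equality rather than a mere liminf.

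The main obstacle I expect is bookkeeping the interplay between the scale $R$ and the regularization parameter $\varepsilon$: Lemma \ref{almostmonotonic} only gives the bound when $\varepsilon\ll R$ (indeed \eqref{eq:lowerboundonH} needs $\varepsilon<r/100$), so when telescoping down to very small scales $r$ one must either choose $\varepsilon$ depending on the smallest scale under consideration, or — cleaner — first fix $r$, prove the $\varepsilon$-uniform bound on $[r,1]$ for all $\varepsilon<r/100$, pass $\varepsilon\downarrow0$ to get the clean statement on $[r,1]$, and only then let $r\downarrow0$. A secondary point requiring care is that \eqref{e:growthforN} is stated with $N(0,Q,v^{(Q)})$ already identified as the limit, so the logical order should be: establish the monotonicity-up-to-$Cr^\alpha$ first, deduce existence of the limit, and then the displayed inequality is just the monotone inequality rewritten. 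Uniformity over $Q\in K$ is automatic since every constant appearing — the CFMS lower bound constant, $k$ from Lemma \ref{almostmonotonic}, and the uniform Lipschitz bound for the rescalings $v_r^{(Q)}$ from Remark \ref{r:weakstarconvergence} and Lemma \ref{l:vblowups} — was already arranged to be uniform over compact subsets of $\partial\Omega$.
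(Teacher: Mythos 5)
Your proposal is correct and takes essentially the same route as the paper: telescoping the drop of $N_\varepsilon$ over dyadic scales via Lemma \ref{almostmonotonic} with $\varepsilon$ small relative to the smallest scale in play, summing the convergent geometric series in $R_m^\alpha$, and then letting $\varepsilon\downarrow 0$ using $v^{(Q)}_\varepsilon\to v^{(Q)}$ in $W^{1,2}$ together with the lower bound \eqref{eq:lowerboundonH}. The only minor difference is bookkeeping for the limit in \eqref{eqn-tt-N}: the paper quotes its existence from \eqref{eq51} of Lemma \ref{basicalmgrenfacts} and telescopes between a scale $\tilde r\ll r$ and $r$, whereas you rederive existence from monotonicity of $N(r,Q,v^{(Q)})+C'r^\alpha$ (using $N\geq 0$) and then identify the limit with $d$ via \eqref{eq51} --- both are fine.
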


\begin{proof}[Proof of Theorem \ref{t: growthrateforW}]
By Lemma \ref{basicalmgrenfacts}, $N(0, Q, v^{(Q)})$ as defined in \eqref{eqn-tt-N} exists. As such, for any $r > 0$, there exists $\tilde{r} \ll r$  such that $$|N(\tilde{r}, Q, v^{(Q)}) - N(0, Q, v^{(Q)})| < r^{\alpha}.$$ Now pick $\varepsilon \ll\tilde{r}$ small enough so that Lemma \ref{almostmonotonic} applies for $R \in (\tilde{r}, r)$ and such that $$|N(\tilde{r}, Q, v_\varepsilon^{(Q)}) - N(\tilde{r}, Q, v^{(Q)})| + |N(r, Q, v_\varepsilon^{(Q)}) - N(r, Q, v^{(Q)})|< r^{\alpha}.$$ This is possible, because $v^{(Q)}_\varepsilon  \rightarrow v^{(Q)}$ in $W^{1,2}$ as $\varepsilon \downarrow 0$.
Choose $j\in\mathbb{Z}$ such that $2^{-j}r \leq \tilde{r} < 2^{-j+1}r$. Then \begin{equation*}\begin{aligned} N(r, &Q, v_\varepsilon^{(Q)}) - N(\tilde{r}, Q, v_\varepsilon^{(Q)}) \\
&\geq N(2^{-j+1}r, Q, v_\varepsilon^{(Q)}) - N(\tilde{r}, Q, v_\varepsilon^{(Q)})  + \sum_{\ell=0}^{j-2} \left(N(2^{-\ell}r, Q, v_\varepsilon^{(Q)})-N(2^{-\ell-1}r, Q, v_\varepsilon^{(Q)})\right) \\
&\geq -{\bf E}(2^{-j+1}r, \varepsilon)(2^{-j+1}r-r')-\frac{1}{2}\sum_{\ell=0}^{j-2} {\bf E}(2^{-\ell}r, \varepsilon)2^{-\ell}r
\\ &\!\!\!\!\!\stackrel{\mathrm{Lem}\; \ref{almostmonotonic}}{\geq} -kr^{\alpha} \sum_{\ell=0}^{j-1}(2^{-\ell})^{\alpha/2} \geq -kr^\alpha \sum_{\ell=0}^\infty 2^{-\ell\alpha/2} =  -C_\alpha r^{\alpha}.\end{aligned}
\end{equation*}
Combining all the inequalities above, we conclude that \begin{equation*} N(r, Q, v^{(Q)}) - N(0, Q, v^{(Q)}) > -C r^{\alpha}.\qedhere\end{equation*}\end{proof}

\section{Bounding the growth of the Weiss type functional}\label{s:boundW}

 Recall the definition of the Weiss-type monotonicity formula $W_d(r, Q, f)$ from \S\ref{s: WeissMonotonicity}: $$W_d(r,Q,f)= \frac{1}{r^{n-2+2d}}\int_{B(Q,r)} |\nabla f|^2 dx -\frac{d}{r^{n-1+2d}}\int_{\partial B(Q,r)} f^2d\sigma.$$ In this section, we estimate the growth of $W_d(r,Q,v^{(Q)}_\varepsilon)$, and in particular, we prove that $|W_d(r,Q, v_\varepsilon^{(Q)})|$ is bounded above by a power of $r$. See Proposition \ref{p: Wdoesntgrowtoofast}.

The key idea in this section is that although $v^{(Q)}_\varepsilon$ is not harmonic, near $Q\in\partial\Omega$ the function ``almost minimizes'' the Dirichlet energy. This is related, but slightly different than almost minimizers in the sense of \cite{davidtoroalmostminimizers}, because the inequality that we establish only holds on sufficiently small balls centered at $Q\in \partial \Omega$.  However, as in \cite{DETAM}, this almost minimization allows us to apply monotonicity techniques. To establish almost minimization of $v^{(Q)}_\varepsilon$, it is crucial that we have estimates on the local dimension of the harmonic measure provided by Lemma \ref{growthofballsatsingularpoints}.

\begin{lemma}\label{l: almostminimizer}
For every $\alpha'\in(0,\alpha)$, integer $d\geq 1$, and every compact set $K$ in $\Gamma_d$, there exists a constant $C > 0$ with the following property. For all $Q \in K$, $r\in (0,1]$, $0<\varepsilon < r/100$, and $\zeta_{r,\varepsilon} \in W^{1,2}(B_1(Q))$ with $$\zeta_{r,\varepsilon}(\cdot+Q)|_{\partial B_1} = \frac{v_\varepsilon^{(Q)}(r\cdot+Q)}{r^d}\Big|_{\partial B_1},$$ we have $W_d(1, Q, \zeta_{r,\varepsilon})+ Cr^{\alpha'} \geq W_d(r, Q, v_\varepsilon^{(Q)}).$
\end{lemma}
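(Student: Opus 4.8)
The plan is to compare $W_d(r,Q,v_\varepsilon^{(Q)})$ with $W_d(1,Q,\zeta_{r,\varepsilon})$ by rescaling and then exploiting the fact that, although $v_\varepsilon^{(Q)}$ is not harmonic, it solves $\Delta v_\varepsilon^{(Q)} = (\Delta v^{(Q)})*\varphi_\varepsilon$ with a right-hand side controlled by \eqref{e:boundondistderivative}. After translating so that $Q=0$ (and writing $v=v^{(Q)}$, $v_\varepsilon = v*\varphi_\varepsilon$), set $w(y) := v_\varepsilon(ry)/r^d$, so that $W_d(1,0,w)=W_d(r,0,v_\varepsilon)$ by the scaling identity \eqref{newscaling}-type computation, and $w|_{\partial B_1} = \zeta_{r,\varepsilon}(\cdot)|_{\partial B_1}$. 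Thus it suffices to show $W_d(1,0,\zeta_{r,\varepsilon}) + Cr^{\alpha'} \geq W_d(1,0,w)$, i.e.\ that $w$ beats its own competitor $\zeta_{r,\varepsilon}$ in the functional $W_d(1,0,\cdot)$ only up to an error $Cr^{\alpha'}$.

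The key point is that $W_d(1,0,\zeta)$, among functions $\zeta\in W^{1,2}(B_1)$ with fixed boundary trace, is minimized exactly by the \emph{harmonic} extension of that trace (the Dirichlet term is minimized by harmonicity and the boundary term depends only on the trace, which is fixed). Let $\tilde w$ denote the harmonic extension of $w|_{\partial B_1}$ to $B_1$. Then $W_d(1,0,\tilde w) \leq W_d(1,0,\zeta_{r,\varepsilon})$ automatically, so it is enough to prove
\begin{equation*}
W_d(1,0,w) \leq W_d(1,0,\tilde w) + Cr^{\alpha'}.
\end{equation*}
Since $w$ and $\tilde w$ agree on $\partial B_1$, the boundary terms cancel, and we must bound $\int_{B_1}|\nabla w|^2 - \int_{B_1}|\nabla \tilde w|^2$. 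Writing $w = \tilde w + (w-\tilde w)$ with $w-\tilde w\in W^{1,2}_0(B_1)$ and using that $\tilde w$ is harmonic (so $\int_{B_1}\nabla\tilde w\cdot\nabla(w-\tilde w)=0$), we get
\begin{equation*}
\int_{B_1}|\nabla w|^2 - \int_{B_1}|\nabla\tilde w|^2 = \int_{B_1}|\nabla(w-\tilde w)|^2.
\end{equation*}
On the other hand, integrating by parts, $\int_{B_1}|\nabla(w-\tilde w)|^2 = -\int_{B_1}(w-\tilde w)\,\Delta w$ (the boundary term vanishes and $\Delta\tilde w=0$). So the whole estimate reduces to bounding $\bigl|\int_{B_1}(w-\tilde w)\,\Delta w\bigr|$ by $Cr^{\alpha'}$.

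Now I would unwind the rescaling: $\Delta w(y) = r^{2-d}\Delta v_\varepsilon(ry)$, and by \eqref{e:boundondistderivative} (moving the convolution as in the proof of Lemma \ref{almostmonotonic}, equations \eqref{e: estimatingEr}--\eqref{e: estimatingEr2}), $|\Delta v_\varepsilon|$ is controlled by $C|x|^\alpha$ times (a mollification of) $d\omega^-$ near $0$; after rescaling this becomes a multiple of $\omega^-_{r,0}$ with total mass on $B_{1+\varepsilon/r}$ bounded, times the factor $r^\alpha \omega^-(B(0,r))/r^{n-2+d}$ coming from the normalization of $w$ and of $v_r$. The crucial input is Lemma \ref{growthofballsatsingularpoints}: for $Q\in K\subset\Gamma_d$ and any $\delta>0$, $\omega^-(B(0,r))/r^{n-2+d-\delta}\to 0$ uniformly, so $\omega^-(B(0,r))/r^{n-2+d}\leq C_\delta r^{-\delta}$, giving a net factor $\leq C_\delta r^{\alpha-\delta}$. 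Choosing $\delta = \alpha-\alpha'$ yields the exponent $\alpha'$. For the factor $\|w-\tilde w\|_{L^\infty(B_1)}$ (or $L^1(d\omega^-_{r,0})$): since $w|_{\partial B_1}$ is the trace of the uniformly Lipschitz, uniformly bounded function $v_r*\varphi_{\varepsilon/r}$ (Remark \ref{r:weakstarconvergence} / Lemma \ref{l:vblowups} give uniform $L^\infty$ and Lipschitz bounds on $v_r$ near $K$), the harmonic extension $\tilde w$ and $w$ itself are uniformly bounded on $B_1$, so $\|w-\tilde w\|_{L^\infty(B_1)}\leq C$ uniformly in $r,\varepsilon,Q$; combined with the doubling of $\omega^-$ this controls the $d\omega^-_{r,0}$-integral against a constant. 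Putting the pieces together gives $\bigl|\int_{B_1}(w-\tilde w)\Delta w\bigr|\leq C_{\alpha'}r^{\alpha'}$, which is the claim.

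The main obstacle, as in Lemma \ref{almostmonotonic}, is the bookkeeping of the mollification and the scaling factors — making sure the $\varepsilon$-regularization does not spoil the $L^\infty$/Lipschitz bounds or the support localization (one uses $\varepsilon<r/100$ so the mollified objects still live essentially in $B_{2}$ after rescaling), and that all constants are genuinely uniform over $Q\in K$. The one genuinely substantive ingredient, rather than technical, is the use of Lemma \ref{growthofballsatsingularpoints} to absorb the loss from $\alpha$ to $\alpha'$: without the uniform sub-power-law bound on $\omega^-(B(Q,r))/r^{n-2+d}$ one could not close the estimate, and this is exactly where the structural results from \cite{BETHarmonicpoly} enter. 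Everything else is integration by parts plus the variational characterization of the harmonic extension as the minimizer of $W_d(1,0,\cdot)$ over a fixed trace class.
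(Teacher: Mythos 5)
Your argument is, after the change of variables $y=(x-Q)/r$, essentially the paper's own proof: compare against the harmonic extension of the boundary trace (which minimizes $W_d$ in its trace class), integrate by parts to reduce everything to the error integral $\int(\tilde w - w)\,\Delta w$, i.e.\ to $\int(\zeta^*_\varepsilon - v_\varepsilon^{(Q)})\,\Delta v_\varepsilon^{(Q)}$ at scale $r$, and control that using the H\"older bound \eqref{e:boundondistderivative} together with Lemma \ref{growthofballsatsingularpoints}.

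One intermediate claim, however, is wrong as written. You assert that $w(y)=v_\varepsilon^{(Q)}(ry+Q)/r^d$ and its harmonic extension $\tilde w$ are uniformly bounded on $B_1$, hence $\|w-\tilde w\|_{L^\infty(B_1)}\leq C$ uniformly in $r,\varepsilon,Q$. That would amount to $\sup_{B_r(Q)}|v^{(Q)}|\lesssim r^d$, i.e.\ exactly the non-degeneracy/density bound that is \emph{not} available at this stage (finiteness and positivity of the density are only obtained later, in Theorem \ref{l:blowupsunique}). What the uniform Lipschitz/$L^\infty$ bounds of Remark \ref{r:weakstarconvergence} and the CFMS-type estimate actually give is $\|w\|_{L^\infty(B_1)}\leq C\,\omega^-(B(Q,r))/r^{n-2+d}$, and the same for $\tilde w$ by the maximum principle; this quotient may a priori blow up or degenerate, only slower than any power of $r$ by Lemma \ref{growthofballsatsingularpoints}. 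The repair is pure bookkeeping: with the correct sup bound the error integral is controlled by $Cr^{\alpha}\bigl(\omega^-(B(Q,r))/r^{n-2+d}\bigr)^2$, which is precisely the quantity appearing in \eqref{e:lastamestimate}, and applying Lemma \ref{growthofballsatsingularpoints} with $\delta=(\alpha-\alpha')/2$ (rather than your $\delta=\alpha-\alpha'$) still yields the bound $Cr^{\alpha'}$. So the strategy and conclusion are correct; only the uniform $L^\infty$ claim must be replaced by the density-quotient bound, which costs a second factor that the sub-polynomial estimate happily absorbs.
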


\begin{proof}
Fix $r\in(0,1]$ and let $\zeta_\varepsilon(x+Q) = r^d\zeta_{r,\varepsilon}\left(\frac{x}{r}+Q\right)$, so that $W_d(1, Q, \zeta_{r,\varepsilon}) = W_d(r, Q, \zeta_\varepsilon)$ and $\zeta_\varepsilon|_{\partial B_r(Q)} = v_\varepsilon^{(Q)}|_{\partial B_r(Q)}$. If $\zeta^*_\varepsilon$ is the harmonic function on $B_r(Q)$ with boundary values equal to $\zeta_\varepsilon|_{\partial B_r(Q)}$, then $W_d(r, Q, \zeta_\varepsilon) \geq W_d(r, Q, \zeta^*_\varepsilon)$, because $\zeta^*$ minimizes the Dirichlet energy in its trace class. Thus, it suffices to prove \begin{equation*}
W_d(r, Q, \zeta^*_\varepsilon)+ Cr^{\alpha'} \geq W_d(r, Q, v_\varepsilon^{(Q)}).\end{equation*}
Equivalently, because $\zeta^*_\varepsilon|_{\partial B_r} = v_\varepsilon^{(Q)}|_{\partial B_r}$, it suffices to prove
\begin{equation}\label{eq:valmostminimizer}
\frac{1}{r^{n-2+2d}}\int_{B_r(Q)}|\nabla \zeta^*_\varepsilon|^2 + Cr^{\alpha'} \geq \frac{1}{r^{n-2+2d}}\int_{B_r(Q)} |\nabla v_\varepsilon^{(Q)}|^2.
\end{equation}  We compute \begin{equation}\label{eqn-tt-5.1A}
\begin{aligned}
\int_{B_r(Q)} |\nabla \zeta^*_\varepsilon|^2\ dx =& \int_{B_r(Q)} |\nabla v_\varepsilon^{(Q)} + \nabla (\zeta^*_\varepsilon - v_\varepsilon^{(Q)})|^2\ dx\\ \geq& \int_{B_r(Q)} |\nabla v^{(Q)}_\varepsilon|^2\ dx+ 2\int_{B_r(Q)} \nabla (\zeta^*_\varepsilon - v_\varepsilon^{(Q)})\cdot \nabla v_\varepsilon^{(Q)}\ dx\\
 =& \int_{B_r(Q)} |\nabla v^{(Q)}_\varepsilon|^2\ dx - 2\int_{B_r(Q)} (\zeta^*_\varepsilon - v_\varepsilon^{(Q)}) \Delta v_\varepsilon^{(Q)}\ dx.
\end{aligned}
\end{equation}
In light of \eqref{eqn-tt-5.1A}, to prove \eqref{eq:valmostminimizer}, we need only to show $$\left|\frac{1}{r^{n-2+2d}}\int_{B_r(Q)} (\zeta^*_\varepsilon - v_\varepsilon^{(Q)}) \Delta v_\varepsilon^{(Q)}\right| \leq Cr^{\alpha'}, \quad \forall 0 < \varepsilon < r/100.$$ By the maximum principle, $\zeta^*_\varepsilon$ on $B_r(Q)$ is less than the maximum of $v_\varepsilon^{(Q)}$ on $\partial B_r(Q)$, which in turn is less than $C\frac{\omega^-(B(Q,r))}{r^{n-2}}$ (see \eqref{eqn-tt-4.0}). This estimate, the fact that $\varepsilon < r$, the uniform doubling of $\omega^-$ and \eqref{e:boundondistderivative} yield \begin{equation}\label{e:lastamestimate}\begin{aligned} \left|\frac{1}{r^{n-2+2d}}\int_{B_r(Q)} (\zeta^*_\varepsilon - v_\varepsilon^{(Q)}) \Delta v_\varepsilon^{(Q)}\right| \leq& 2\frac{\omega^-(B(Q,r))}{r^{2n-4+2d}} \int_{B_r(Q)} |\varphi_\varepsilon*\Delta v^{(Q)}|\\ \leq&
C\left(\mathrm{osc}_{P\in B_{2r}(Q)} \frac{h(P)}{h(Q)}\right) \frac{\omega^-(B(Q,r))\omega^-(B(Q,2r))}{r^{2n-4+2d}}\\
\leq& Cr^\alpha \left(\frac{\omega^-(B(Q,r))}{r^{n-2+d}}\right)^2 \stackrel{\mathrm{Lem}\;\ref{growthofballsatsingularpoints}}{\leq} Cr^{\alpha'}.
\end{aligned}\end{equation} This completes the proof of the lemma.\end{proof}

\begin{proposition}\label{p: Wdoesntgrowtoofast}
For every $\alpha'\in(0,\alpha)$, integer $d\geq 1$, and every compact set $K$ in $\Gamma_d$, there exist constants $C > 0$ and $r_K > 0$ such that if $0<r \leq r_K$, then\begin{equation}\label{eq:Wdoesntgrowtoofast}
|W_d(r, Q, v^{(Q)})| \leq Cr^{\alpha'}.
\end{equation}
\end{proposition}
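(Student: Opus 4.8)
The plan is to combine the almost-minimization property (Lemma \ref{l: almostminimizer}), the epiperimetric inequality (Proposition \ref{epiperimetricinequality}), and the almost-monotonicity of Almgren's frequency (Theorem \ref{t: growthrateforW}) into a differential inequality for $r\mapsto W_d(r,Q,v^{(Q)}_\varepsilon)$ that forces power decay, and then pass to the limit $\varepsilon\downarrow 0$. First I would fix $\alpha'\in(0,\alpha)$, $\alpha''\in(\alpha',\alpha)$, and a compact set $K\subset\Gamma_d$; throughout, constants depend only on the NTA data, $K$, and $\|\log h\|_{C^{0,\alpha}(K)}$. Working with the mollified functions $v^{(Q)}_\varepsilon$ (which are smooth, so that the derivative formula \eqref{formulaforwprime} applies), I would use \eqref{formulaforwprime} at scale $r$: writing $\overline{(v^{(Q)}_\varepsilon)}_{r,Q}$ for the $d$-homogeneous extension, the term $W_d(r,Q,\overline{(v^{(Q)}_\varepsilon)}_{r,Q}) - W_d(r,Q,v^{(Q)}_\varepsilon)$ is exactly what the epiperimetric inequality controls: taking $u = \overline{(v^{(Q)}_\varepsilon)}_{r,Q}$ and $f$ its harmonic extension in Proposition \ref{epiperimetricinequality}, and using Lemma \ref{l: almostminimizer} with the competitor $\zeta_{r,\varepsilon}$ chosen to be (the rescaling of) $f$, one gets
\[
W_d(r,Q,v^{(Q)}_\varepsilon) \le W_d(r,Q,f) + Cr^{\alpha''} \le (1-\kappa)\,W_d\big(r,Q,\overline{(v^{(Q)}_\varepsilon)}_{r,Q}\big) + Cr^{\alpha''},
\]
so $W_d(r,Q,\overline{(v^{(Q)}_\varepsilon)}_{r,Q}) - W_d(r,Q,v^{(Q)}_\varepsilon) \ge \frac{\kappa}{1-\kappa}W_d(r,Q,v^{(Q)}_\varepsilon) - Cr^{\alpha''}$. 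Plugging this into \eqref{formulaforwprime} and discarding the nonnegative last term yields a differential inequality of the form
\[
\frac{d}{dr}W_d(r,Q,v^{(Q)}_\varepsilon) \ge \frac{\gamma}{r}\,W_d(r,Q,v^{(Q)}_\varepsilon) - \frac{C}{r}\,r^{\alpha''} - (\text{error from }\Delta v^{(Q)}_\varepsilon),
\]
with $\gamma = (n+2d-2)\kappa/(1-\kappa) > 0$; the error term coming from the fact that \eqref{formulaforwprime} is only exact for harmonic $f$ is handled exactly as in the proof of Lemma \ref{almostmonotonic} and Lemma \ref{l: almostminimizer}, using \eqref{e:boundondistderivative} together with Lemma \ref{growthofballsatsingularpoints}, and contributes at most $Cr^{\alpha'-1}$.

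The differential inequality $\frac{d}{dr}W_d \ge \frac{\gamma}{r}W_d - Cr^{\alpha'-1}$ integrates (via the integrating factor $r^{-\gamma}$) to give a two-sided bound: if $\alpha' < \gamma$ this immediately yields $W_d(r,Q,v^{(Q)}_\varepsilon) \le Cr^{\alpha'}$ for $r \le r_K$ once we know $W_d(r_K,Q,v^{(Q)}_\varepsilon)$ is bounded, and we can always shrink $\alpha'$ so that $\alpha' < \gamma$ (the statement is for \emph{every} $\alpha'\in(0,\alpha)$, but it suffices to prove it for $\alpha'$ small, since $W_d(r,\cdot) \le Cr^{\alpha'}$ on $r\le r_K\le 1$ trivially implies the same with any larger exponent replaced by a smaller one — wait, that direction is backwards, so instead I note $r^{\alpha'}\le r^{\alpha''}\cdot r^{\alpha'-\alpha''}$; more carefully, proving it for one small $\alpha'_0$ gives it for all $\alpha'\ge\alpha'_0$ since $r^{\alpha'}\ge r^{\alpha'_0}$ is false — the correct observation is simply that decreasing the target exponent only \emph{weakens} the bound $|W_d|\le Cr^{\alpha'}$ for $r\le 1$, so it suffices to prove \eqref{eq:Wdoesntgrowtoofast} for $\alpha'$ arbitrarily close to $\alpha$, hence we may assume $\alpha' > \gamma$ is \emph{not} needed and in fact we should argue for the full range by a bootstrap: an initial crude bound gives some decay, which fed back improves the error exponent). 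For the lower bound $W_d(r,Q,v^{(Q)}_\varepsilon) \ge -Cr^{\alpha'}$, I would use almost-monotonicity of the frequency: Theorem \ref{t: growthrateforW} gives $N(r,Q,v^{(Q)}) \ge d - Cr^{\alpha}$, and the standard algebraic identity relating $W_d$ and $N$ (namely $W_d(r,Q,f) = \frac{H(r,Q,f)}{r^{n-1+2d}}(N(r,Q,f) - d)$ up to the derivative-of-$H$ computation) together with the lower bound \eqref{eq:lowerboundonH} on $H_\varepsilon$ and Lemma \ref{growthofballsatsingularpoints} converts this into $W_d(r,Q,v^{(Q)}_\varepsilon) \ge -Cr^{\alpha'}$. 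Finally, since all constants are uniform in $\varepsilon$ and $v^{(Q)}_\varepsilon \to v^{(Q)}$ in $W^{1,2}_{\mathrm{loc}}$ (so $W_d(r,Q,v^{(Q)}_\varepsilon)\to W_d(r,Q,v^{(Q)})$ for a.e.\ $r$, and for all $r$ after using continuity in $r$ of the limit), \eqref{eq:Wdoesntgrowtoofast} follows.

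The main obstacle I anticipate is the careful bookkeeping of the error term coming from $\Delta v^{(Q)}_\varepsilon \neq 0$ in the Weiss derivative formula. The clean formula \eqref{formulaforwprime} holds for harmonic (more generally $C^1$ with harmonicity) $f$; for $v^{(Q)}_\varepsilon$ one must re-derive it and track the extra terms $\propto \int_{B_r(Q)}\langle x,\nabla v^{(Q)}_\varepsilon\rangle\Delta v^{(Q)}_\varepsilon$ and $\propto \int_{B_r(Q)} v^{(Q)}_\varepsilon \Delta v^{(Q)}_\varepsilon$ — precisely the quantities estimated in the proof of Lemma \ref{almostmonotonic}. The point is that, after dividing by the appropriate power of $r$ and using the lower bound on $H_\varepsilon$ from \eqref{eq:lowerboundonH}, these are $O(r^{\alpha'-1})$ uniformly in $\varepsilon$, thanks to Lemma \ref{growthofballsatsingularpoints} which rules out degeneracy of $\omega^-(B(Q,r))/r^{n-2+d}$; this is exactly why that lemma was proved. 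A secondary technical point is ensuring the epiperimetric competitor $\zeta_{r,\varepsilon}$ (the harmonic extension of the $d$-homogeneous extension of the boundary trace) is admissible in Lemma \ref{l: almostminimizer}, which it is by construction. With these in hand, the Gronwall-type integration is routine.
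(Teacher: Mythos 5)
Your proposal follows essentially the same route as the paper: lower bound via the identity $W_d(r,Q,f)=\frac{H(r,Q,f)}{r^{n-1+2d}}(N(r,Q,f)-d)$ together with the almost-monotonicity of the frequency (Theorem \ref{t: growthrateforW}) and Lemma \ref{growthofballsatsingularpoints}; upper bound via \eqref{formulaforwprime} plus the epiperimetric inequality (Proposition \ref{epiperimetricinequality}) and the almost-minimization Lemma \ref{l: almostminimizer}, then a Gronwall/integrating-factor argument and $\varepsilon\downarrow 0$. Three corrections to your bookkeeping. First, \eqref{formulaforwprime} is an exact identity for every $C^1$ function (it only compares $W_d$ of $f$ with $W_d$ of the $d$-homogeneous extension of its boundary trace), so there is no extra ``error from $\Delta v^{(Q)}_\varepsilon$'' in the derivative of the Weiss functional; the non-harmonicity of $v^{(Q)}_\varepsilon$ enters the argument only through Lemma \ref{l: almostminimizer}. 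Your phantom $Cr^{\alpha'-1}$ term is therefore harmless but unnecessary. Second, for the lower bound $W_d(r,Q,v^{(Q)})\geq -Cr^{\alpha'}$ the ingredient you need is an \emph{upper} bound on $H(r,Q,v^{(Q)})/r^{n-1+2d}$, namely the CFMS-type estimate $H(r,Q,v^{(Q)})/r^{n-1+2d}\leq C\bigl(\omega^-(B(Q,r))/r^{n-2+d}\bigr)^2\leq Cr^{-2\delta}$ (the last step by Lemma \ref{growthofballsatsingularpoints}), since $N-d$ may be negative; the lower bound \eqref{eq:lowerboundonH} on $H_\varepsilon$, which you cite, points in the wrong direction and is only relevant for the frequency error estimates in Lemma \ref{almostmonotonic}. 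Third, your parenthetical worry about whether $\alpha'<\gamma$ is moot: with $\kappa=1/(n+2d-1)$ one has $\gamma=(n+2d-2)\kappa/(1-\kappa)=1$ exactly, and since $\alpha'<\alpha\leq 1$ the Gronwall step closes for every $\alpha'\in(0,\alpha)$ (choosing $\alpha''\in(\alpha',\alpha)$), with no bootstrap needed; the confused digression there should simply be deleted.
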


\begin{proof} Let $0<\alpha' <\alpha'' < \alpha$.
To show $W_d(r, Q, v^{(Q)}) \geq -Cr^{\alpha'}$, first observe that
\begin{equation}\label{e:relatewtohandn}W_d(r,Q,f) = \frac{H(r, Q, f)}{r^{n-1+2d}}(N(r,Q, f) - d).\end{equation}
From \eqref{e:relatewtohandn}, the estimate $$\frac{H(r,Q,v^{(Q)})}{r^{n-1+2d}} \leq C\left(\frac{\omega^-(B(Q,r))}{r^{n - 2+d}}\right)^2,$$
(which is obtained using a CFMS type estimate, see \cite[Lemma 4.8]{jerisonandkenig})
Theorem \ref{t: growthrateforW}, and Lemma \ref{growthofballsatsingularpoints} we get the lower bound on $W_d$. As an immediate consequence,
\begin{equation}\label{e:Watzero}
\liminf_{r\downarrow 0}W_d(r, Q, v^{(Q)}) \geq 0.
\end{equation}

To establish the upper bound on $W_d$, let $1 \geq r_0 > r_1 > 0$ and $\varepsilon < r_1/100$ so that we can apply Lemma \ref{l: almostminimizer} for all $r \in [r_1,r_0]$. Recall \eqref{formulaforwprime} with $f=v_\varepsilon^{(Q)}$. By ignoring the positive integral on the right hand side of \eqref{formulaforwprime}, we obtain
\begin{equation}\label{eqn-tt-5.5A}
\frac{d}{dr} W_d(r, Q, v^{(Q)}_\varepsilon) \geq -\frac{n+2d-2}{r}W_d(r, Q, v^{(Q)}_\varepsilon) +  \frac{n+2d-2}{r}W_d(1, 0, V_{r,Q,\varepsilon})=:I(r),
\end{equation}
where $V_{r,Q,\varepsilon}$ denotes the $d$-homogenous function about $Q$ that agrees with $r^{-d}v^{(Q)}_\varepsilon(rx+Q)$ on $\partial B_1$. Apply the epiperimetric inequality to $V_{r,Q, \varepsilon}$
(see Proposition \ref{epiperimetricinequality}) to conclude that $I$ as defined in \eqref{eqn-tt-5.5A} satisfies
\begin{equation}\label{eqn-tt-5.5B}
I(r)\geq -\frac{n+2d-2}{r}W_d(r, Q, v^{(Q)}_\varepsilon) +  \frac{n+2d-2}{r(1-\kappa)}W_d(1, 0, \zeta_{r,\varepsilon,Q}),
\end{equation}
where $\zeta_{r,\varepsilon,Q}\in W^{1,2}(B_1)$ denotes the harmonic extension of $V_{r,Q,\varepsilon}|_{\partial B_1}$, and 
 \begin{equation}\label{magickappa} \kappa = \frac{1}{n+2d-1}.\end{equation}
By Lemma \ref{l: almostminimizer} (applied with $\alpha''<\alpha$),
\begin{equation}\label{eqn-tt-5.5C}
I(r)\geq \frac{n+2d-2}{r}\left(\frac{\kappa}{1-\kappa}W_d(r, Q, v^{(Q)}_\varepsilon) - \frac{C}{1-\kappa}r^{\alpha''}\right),
\end{equation}
for $r\in [r_1, r_0]$ and $\varepsilon > 0$ as above. Observe that $(n+2d-2)\kappa/(1-\kappa)=1$ by \eqref{magickappa}. Thus \eqref{eqn-tt-5.5A} and \eqref{eqn-tt-5.5C} yield
\begin{equation} \label{differentialinequalityforepsilon}
\frac{d}{dr} W_d(r,Q,v^{(Q)}_\varepsilon) \geq I(r)\geq  \frac{1}{r}W_d(r,Q,v^{(Q)}_\varepsilon) - \frac{(n+2d-2)C}{(1-\kappa)r} r^{\alpha''}\end{equation} for $r\in[r_1,r_0]$ and $\varepsilon>0$ as above. Furthermore, $C>0$ is independent of $\varepsilon, r_0, r_1$, and the particular point $Q\in K$.

Fix $\tilde{C} > 0$ large to be chosen below and define $$\widetilde{W}(r) = W_d(r, Q, v^{(Q)}_\varepsilon) + \tilde{C}r^{\alpha''}.$$ Then \begin{equation}\label{e:differentialinequalityfortildeW}\begin{aligned} \frac{d}{dr}\widetilde{W}(r) =&  \frac{d}{dr}W(r, Q, v^{(Q)}_\varepsilon) + \alpha''\tilde{C}r^{\alpha''-1} \\
\stackrel{\eqref{differentialinequalityforepsilon}}{\geq}& \frac{1}{r}W_d(r, Q, v^{(Q)}_\varepsilon)+\left(\alpha''\tilde{C} - \frac{(n+2d-2)C}{1-\kappa}\right)r^{\alpha''-1}.
\end{aligned}\end{equation}

By the lower bound on the growth of $W_d$ from above \eqref{differentialinequalityforepsilon}, if $\varepsilon \ll r_1$ and $\tilde{C} = \tilde{C}(\kappa, \alpha, n, d) \gg 1$, then we have $\widetilde{W} > 0$ for $r \in [r_1, r_0]$ and $\varepsilon \ll r_1$ and $\alpha''\tilde{C} - \frac{(n+2d-2)C}{1-\kappa} \geq \alpha' \tilde{C}$.
With these assumptions, \eqref{e:differentialinequalityfortildeW} implies \begin{equation}\label{e:monoforwtilde}\frac{d}{dr}\widetilde{W}(r) \geq \frac{\alpha'}{r}\widetilde{W}>0.\end{equation} Thus, we have $$
\frac{d}{dr} \frac{\widetilde{W}(r)}{r^{\alpha'}} \geq 0\quad\text{for all }r \in [r_1, r_0]\text{ and } \varepsilon \ll r_1.$$ It follows that
\begin{equation}\label{e:lastdiffinequalityfortildeW}
\left(W_d(r_0,Q, v_\varepsilon^{(Q)}) + \tilde{C}r_0^{\alpha''}\right)r_0^{-\alpha'} \geq \left(W_d(r_1,Q, v_\varepsilon^{(Q)}) + \tilde{C}r_1^{\alpha''}\right)r_1^{-\alpha'}\end{equation} for all $\varepsilon \ll r_1$. Note that $\tilde{C}$ is independent of $r_0$, $r_1$, and $\varepsilon$ as long as $\varepsilon$ is small enough. Therefore,  letting $\varepsilon \downarrow 0$ and $r_0=1$ in \eqref{e:lastdiffinequalityfortildeW} (and noting that $W_d(1, Q, v_\varepsilon^{(Q)}) \leq \|v^{(Q)}\|^2_{\mathrm{Lip}(K)}$),
\begin{equation}\label{eqn-tt-5.10A}
(\tilde{C} + \|v^{(Q)}\|^2_{\mathrm{Lip}(K)})r_1^{\alpha'} =: C_{K} r_1^{\alpha'} \geq W_d(r_1, Q, v^{(Q)}). \qedhere
\end{equation}
\end{proof}

Recall Remark \ref{r:sharpregularity}, where we noted that the epiperimetric argument necessitates a loss of exponent. One sees this in the argument above, specifically in the discussion before \eqref{e:monoforwtilde} that one can only prove a rate of growth with exponent $\alpha'$ which is strictly less than the exponent $\alpha''$ that comes from the epiperimetric inequality. This issue is independent from the behavior of the quotient $r^{-(n+d-2)}\omega^{\pm}(B(Q,r))$ and occurs even if one wants to prove $C^{1,1}$ regularity of harmonic functions using the epiperimetric inequality.

\section{\texorpdfstring{$C^{1,\beta}$}{C1,beta} convergence to the blowup at singular points}\label{s: c1betaconvergence}

We now use Proposition \ref{p: Wdoesntgrowtoofast} to show that $v^{(Q)}$ converges at a H\"older rate to its blowups, uniformly over compact subsets of $\Gamma_d$. Let $K$ denote a compact subset of $\Gamma_d$, let $Q \in K$, and let $0 < \varepsilon \ll s < r \leq 1$. We abbreviate the blowups of $v_\varepsilon^{(Q)}$ and $\nabla v_\varepsilon^{(Q)}$ at $Q$ by $$Y(t,x)\equiv \frac{v_\varepsilon^{(Q)}(tx+Q)}{t^d},\quad Z(t,x)\equiv \frac{\nabla v_\varepsilon^{(Q)}(tx+Q)}{t^d}.$$
By the chain rule, $$\frac{d}{dt} Y(t,x)= x\cdot Z(t) - \frac{d}{t}Y(t,x).$$ Thus, by the fundamental theorem of calculus and H\"older's inequality, for $\gamma>0$ to be specified
\begin{equation}\begin{split} \left| Y(r,x)-Y(s,x)\right|^2&=\left(\int_s^r \left[ x\cdot Z(t,x) - \frac{d}{t}Y(t,x)\right]\,dt\right)^2\\
&\leq \left(\int_s^r t^{\gamma-1}\,dt \right) \left(\int_s^r \frac{1}{t^{\gamma-1}}\left[ x\cdot Z(t,x) - \frac{d}{t}Y(t,x)\right]^2\,dt\right)
\end{split}\end{equation}
Using Tonelli's theorem and a change of variables $y=tx+Q$, it follows that
\begin{equation}\begin{split}\label{e:boundblowupbyW}
\int_{\partial B(0,1)} &|Y(r,x)-Y(s,x)|^2\,d\sigma(x)\\
&\leq \frac{r^\gamma-s^\gamma}{\gamma}\int_{\partial B(0,1)} \int_s^r \frac{1}{t^{\gamma-1}}\left[ x\cdot Z(t,x) - \frac{d}{t}Y(t,x)\right]^2\,dt\,d\sigma(x)\\
&= \frac{r^\gamma-s^\gamma}{\gamma} \int_s^r \frac{1}{t^\gamma}\frac{1}{t^{n-2+2d}}\int_{\partial B(Q,t)} \left[\nu\cdot \nabla v_\varepsilon^{(Q)}(y)-\frac{d}{t}v_\varepsilon^{(Q)}(y)\right]^2\,d\sigma(y)dt,
\end{split}\end{equation}
where $\gamma \in (0,\beta)$ is arbitrary. On the other hand, let $0<\beta=\alpha'<\alpha''<\alpha<1$ and let $\tilde C$ be large enough so that both $\widetilde W(s)=W(s, Q, v^{(Q)}_\varepsilon) + \tilde{C}s^{\alpha''}>0$ and so that the final line of \eqref{pineapple} below holds (by Proposition \ref{p: Wdoesntgrowtoofast} such a $\tilde{C}$  need only to depend on $K, \beta, \alpha'', n, d$). Using \eqref{formulaforwprime} for all $t\in[s,r]$ and \eqref{differentialinequalityforepsilon}
we have
\begin{eqnarray}\label{pineapple}
&&\frac{1}{t^\gamma} \frac{1}{t^{n-2+2d}}\int_{\partial B_t(Q)} \left(\nu\cdot \nabla v_{\varepsilon}^{(Q)}(y) -\frac{dv_\varepsilon^{(Q)}(y)}{t}\right)^2\ d\sigma(y) \\
&&\qquad =
\frac{1}{t^\gamma}\frac{d}{dt}W_d(t,Q, v^{(Q)}_\varepsilon) -\frac{I(t)}{t^\gamma}\nonumber\\
&&\qquad\leq\frac{1}{t^\gamma}\left(\frac{d}{dt}W_d(t,Q, v^{(Q)}_\varepsilon)-\frac{1}{t}W_d(t,Q, v^{(Q)}_\varepsilon) +\frac{(n+2d-2)C}{(1-\kappa)t}t^{\alpha''}\right)\nonumber\\
&&\qquad\leq \frac{d}{dt}\left( \frac{W_d(t,Q, v^{(Q)}_\varepsilon)}{t^\gamma}\right) +\frac{d}{dt}\left(\frac{(n+2d-2)C}{(1-\kappa)(\alpha''-\gamma)}t^{\alpha''-\gamma}\right)\nonumber\\
& &\qquad\leq\frac{d}{dt}\left(\frac{W(t, Q, v^{(Q)}_\varepsilon) + \tilde{C}t^{\alpha''}}{t^\gamma}\right).\nonumber
\end{eqnarray}
We note that this estimate is valid, because $\varepsilon \ll s$. Chaining together \eqref{e:boundblowupbyW} and \eqref{pineapple}, we conclude that
\begin{equation}\label{e:boundblowupbyWfinal}\begin{aligned}\int_{\partial B(0,1)} \left| Y(r,x)-Y(s,x)\right|^2\,d\sigma(x) &\leq \frac{r^\gamma-s^\gamma}{\gamma}\left[\frac{W(t, Q, v^{(Q)}_\varepsilon) + \tilde{C}t^{\alpha''}}{t^\gamma}\right]_{t=s}^{t=r}\\
&< \frac{W(r, Q, v^{(Q)}_\varepsilon) + \tilde{C}r^{\alpha''}}{\gamma},\end{aligned}\end{equation} because $\widetilde W(s)=W(s, Q, v^{(Q)}_\varepsilon) + \tilde{C}s^{\alpha''}>0$ by the definition of $\tilde{C} > 0$. Finally, letting $\varepsilon \downarrow 0$ and invoking \eqref{eq:Wdoesntgrowtoofast} one more time, we conclude that
\begin{equation}\label{e:finalboundonblowup} \int_{\partial B(0,1)} \left| Y(r,x)-Y(s,x)\right|^2\,d\sigma(x) \leq C_{\beta, K} r^\beta\quad\text{for all }Q\in K, 0<s<r\leq 1,\end{equation} where $\beta$ is any exponent less than $\alpha$.

With \eqref{e:finalboundonblowup} in hand, we can now obtain uniqueness of pseudoblowups along $\Gamma_d$ and existence of the density.

\begin{theorem}\label{l:blowupsunique} For all $1\leq d\leq d_0$ and $Q\in\Gamma_d$, there exists a unique $d$-homogenous harmonic polynomial $p=p^{(Q)}$ such that for all sequences $Q_i\in\Gamma_d$ with $Q_i\rightarrow Q$ and $r_i\downarrow 0$ the blowups $v_{r_i}^{(Q_i)} \rightarrow p^{(Q)}$ as $i\rightarrow\infty$ uniformly on compact sets. Moreover,
\begin{equation}\label{density-exists}
 D^{n-2+d}(\omega^-,Q):=\lim_{r\downarrow 0} \lim_{  P\in\Gamma_d, P\to Q}\frac{\omega^-(B(P,r))}{r^{n-2+d}}\quad\text{exists}
 \end{equation} and $D^{n-2+d}(\omega^-,Q)\in(0,\infty).$\end{theorem}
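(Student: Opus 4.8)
The plan is to mine the uniform estimate \eqref{e:finalboundonblowup}. Fix $1\le d\le d_0$ and a compact set $K\subset\Gamma_d$, and write $Y^{(Q)}(r,x):=v^{(Q)}(rx+Q)/r^{d}$ (the $\varepsilon\downarrow 0$ limit of the rescalings of \S\ref{s: c1betaconvergence}). Summing \eqref{e:finalboundonblowup} over the dyadic scales $2^{-k}$ (a geometric series) shows $r\mapsto Y^{(Q)}(r,\cdot)|_{\partial B(0,1)}$ is Cauchy in $L^{2}(\partial B(0,1))$, hence converges to some $g^{(Q)}\in L^{2}(\partial B(0,1))$ with $\|Y^{(Q)}(r,\cdot)-g^{(Q)}\|_{L^{2}(\partial B(0,1))}\le C_{K}r^{\beta/2}$, uniformly in $Q\in K$. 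I would then observe that $Q\mapsto g^{(Q)}$ is continuous on $\Gamma_{d}$: for $Q_{i}\to Q$ and a fixed scale $\rho$, the triangle inequality gives $\|g^{(Q_{i})}-g^{(Q)}\|_{L^{2}(\partial B(0,1))}\le 2C_{K}\rho^{\beta/2}+\|Y^{(Q_{i})}(\rho,\cdot)-Y^{(Q)}(\rho,\cdot)\|_{L^{2}(\partial B(0,1))}$, and the last term tends to $0$ as $i\to\infty$ because $h$ and the $u^{\pm}$ are continuous, so $P\mapsto v^{(P)}(\rho\,\cdot+P)$ is continuous; letting $\rho\downarrow 0$ gives continuity (and the H\"older modulus of Corollary \ref{c:blowupschangeholder}).

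Next I would identify $g^{(Q)}$. By Lemma \ref{l:vblowups} there is a subsequence $r_{j}\downarrow 0$ with $v^{(Q)}_{r_{j}}\to p$ locally uniformly, where $p$ is a $d$-homogeneous harmonic polynomial with $\omega_{p}(B(0,1))=1$ (Remark \ref{r:ball-norm}), so $p\not\equiv 0$. Since $Y^{(Q)}(r,\cdot)=\big(\omega^{-}(B(Q,r))/r^{n-2+d}\big)\,v^{(Q)}_{r}$ and, by the CFMS estimate \eqref{eq:lowerboundonH}, $\|Y^{(Q)}(r,\cdot)\|^{2}_{L^{2}(\partial B(0,1))}=H(r,Q,v^{(Q)})/r^{n-1+2d}\ge c\,\big(\omega^{-}(B(Q,r))/r^{n-2+d}\big)^{2}$, boundedness of the left side forces $\omega^{-}(B(Q,r_{j}))/r_{j}^{n-2+d}$ to be bounded; passing to a further subsequence it converges to some $\ell\in[0,\infty)$, so $g^{(Q)}=\ell p$ on $\partial B(0,1)$ (and extends to the $d$-homogeneous function $\ell p$). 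The heart of the matter is showing $\ell>0$, i.e.\ $L:=\lim_{r\downarrow 0}H(r,Q,v^{(Q)})/r^{n-1+2d}>0$. Here I would combine \eqref{e:relatewtohandn} with the identity $\tfrac{d}{dr}\log\tfrac{H_{\varepsilon}(r)}{r^{n-1+2d}}=\tfrac{2(N_{\varepsilon}(r)-d)}{r}+\tfrac{2\int_{B_{r}}v_{\varepsilon}\Delta v_{\varepsilon}}{H_{\varepsilon}(r)}$ (from \eqref{eq52} and \eqref{eq54}) to estimate, for $\varepsilon\ll r\le r_{K}$,
\[
\Big|\tfrac{d}{dr}\log\tfrac{H_{\varepsilon}(r)}{r^{n-1+2d}}\Big|\ \le\ \frac{2\,r^{n-2+2d}\,|W_{d}(r,Q,v^{(Q)}_{\varepsilon})|}{H_{\varepsilon}(r)}+\Big|\frac{2\int_{B_{r}}v_{\varepsilon}\Delta v_{\varepsilon}}{H_{\varepsilon}(r)}\Big|\ \le\ \frac{C r^{\alpha'}}{r\cdot c\,r^{\alpha'/2}}+C r^{\alpha-1}\ \le\ C'' r^{\alpha'/2-1},
\]
where $|W_{d}(r,Q,v^{(Q)}_{\varepsilon})|\le C r^{\alpha'}$ is obtained as in the proof of Proposition \ref{p: Wdoesntgrowtoofast}, the term $\int_{B_{r}}v_{\varepsilon}\Delta v_{\varepsilon}$ is controlled as in \eqref{e: estimateproductterm1}, and the a priori bound $H_{\varepsilon}(r)/r^{n-1+2d}\ge c\,(\omega^{-}(B(Q,r))/r^{n-2+d})^{2}\ge c\,r^{\alpha'/2}$ comes from \eqref{eq:lowerboundonH} and Lemma \ref{growthofballsatsingularpoints} applied with $\delta=\alpha'/4$. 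Since $r^{\alpha'/2-1}$ is integrable near $0$, integrating from $r$ to $r_{K}$ and letting $\varepsilon\downarrow 0$ shows $\log\big(H(r,Q,v^{(Q)})/r^{n-1+2d}\big)$ has a finite limit as $r\downarrow 0$; hence $L\in(0,\infty)$ and $\ell=\sqrt{L}/\|p\|_{L^{2}(\partial B(0,1))}>0$. This non-degeneracy step — converting the power bound on $W_{d}$ together with the ``no faster-than-polynomial degeneracy'' of $\omega^{-}(B(Q,r))/r^{n-2+d}$ into a strictly positive limit — is the step I expect to be delicate; the rest is compactness bookkeeping.

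Finally I would close the loop. If $v^{(Q)}_{r_{k}}\to p'$ and $\omega^{-}(B(Q,r_{k}))/r_{k}^{n-2+d}\to\ell'$ along another subsequence, then $\ell'p'=g^{(Q)}=\ell p$ on $\partial B(0,1)$; since $p,p'$ are $d$-homogeneous harmonic polynomials with $\|p\|_{L^{1}(S^{n-1})}=\|p'\|_{L^{1}(S^{n-1})}=2/d$ (Lemma \ref{l:hball} and the normalization $\omega_{p}(B(0,1))=\omega_{p'}(B(0,1))=1$), comparing $L^{1}(S^{n-1})$ norms forces $\ell'=\ell$, hence $p'=p$. Thus $p^{(Q)}:=p$ and $\ell^{(Q)}:=\ell$ are independent of the subsequence, and $v^{(Q)}_{r}\to p^{(Q)}$ locally uniformly and $\omega^{-}(B(Q,r))/r^{n-2+d}\to\ell^{(Q)}$ as $r\downarrow 0$.

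For a varying sequence $Q_{i}\to Q$ in $\Gamma_{d}$ and $r_{i}\downarrow 0$, I would repeat the dichotomy: along a subsequence $v^{(Q_{i})}_{r_{i}}\to p'$ (Lemma \ref{l:vblowups}) and $\omega^{-}(B(Q_{i},r_{i}))/r_{i}^{n-2+d}\to\ell'$ (bounded uniformly over $K$ by the same CFMS estimate), so $Y^{(Q_{i})}(r_{i},\cdot)\to\ell'p'$ in $L^{2}(\partial B(0,1))$; but $\|Y^{(Q_{i})}(r_{i},\cdot)-g^{(Q_{i})}\|_{L^{2}(\partial B(0,1))}\le C_{K}r_{i}^{\beta/2}\to 0$ and $g^{(Q_{i})}\to g^{(Q)}=\ell^{(Q)}p^{(Q)}$ by the continuity of paragraph one, so $\ell'p'=\ell^{(Q)}p^{(Q)}$, and the normalization argument again gives $\ell'=\ell^{(Q)}$, $p'=p^{(Q)}$. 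Since this holds along every subsequence, $v^{(Q_{i})}_{r_{i}}\to p^{(Q)}$ locally uniformly and $\omega^{-}(B(Q_{i},r_{i}))/r_{i}^{n-2+d}\to\ell^{(Q)}$, which is exactly Theorem \ref{l:blowupsunique}: the former is the asserted uniqueness of blowups, and the latter says that for each small fixed $r$ the quantities $\limsup_{P\to Q}$ and $\liminf_{P\to Q}$ of $\omega^{-}(B(P,r))/r^{n-2+d}$ (finite and positive by the uniform bounds on a compact neighborhood of $Q$ in $\Gamma_{d}$) are squeezed as $r\downarrow 0$ to the common value $\ell^{(Q)}$, so $D^{n-2+d}(\omega^{-},Q)=\ell^{(Q)}\in(0,\infty)$.
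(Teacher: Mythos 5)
Your proposal is correct, and its backbone is the same as the paper's: everything is driven by the uniform estimate \eqref{e:finalboundonblowup}, compactness of blowups from Lemma \ref{l:vblowups}, the normalization of Remark \ref{r:ball-norm}, and the local dimension bounds of Lemma \ref{growthofballsatsingularpoints}. Where you genuinely diverge is in the organization and, above all, in the non-degeneracy step. The paper never constructs the limit $g^{(Q)}$ as a separate object; it fixes a scale $r$, compares it with blowup scales via \eqref{e:ktblowupsbounded}, and rules out $\delta=0$ and $\delta=\infty$ by contradiction: if $\delta=\infty$ the left side of \eqref{e:ktblowupsbounded} blows up while the right side stays bounded, and if $\delta=0$ one lands in \eqref{e:dontdegenerate}, which together with Lemma \ref{growthofballsatsingularpoints} forces $\|v^{(Q)}_r\|_{L^2(\partial B_1)}\to 0$, contradicting $\sup_{B(0,1)}|v^{(Q)}_r|\simeq 1$ from Remark \ref{r:ball-norm}; the agreement of the $\liminf$ and $\limsup$ sequences (same $\delta$, same polynomial) is then a triangle-inequality argument at a common fixed scale, \eqref{e:ktblowupsthesame-tt}. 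You instead build $g^{(Q)}=\lim_{r\downarrow0}Y^{(Q)}(r,\cdot)$ with a rate, prove its continuity in $Q$, and obtain non-degeneracy by an Almgren-style differential inequality for $\log\bigl(H_\varepsilon(r)/r^{n-1+2d}\bigr)$, using \eqref{e:relatewtohandn}, the bound $|W_d(r,Q,v^{(Q)}_\varepsilon)|\le Cr^{\alpha'}$, the CFMS lower bound \eqref{eq:lowerboundonH}, Lemma \ref{growthofballsatsingularpoints}, and the drift estimate \eqref{e: estimateproductterm1}; integrability of $r^{\alpha'/2-1}$ then gives $L=\lim_r H(r,Q,v^{(Q)})/r^{n-1+2d}\in(0,\infty)$, which is stronger than what the theorem needs (it directly yields existence of the limit of $\|Y^{(Q)}(r,\cdot)\|_{L^2}$, not just non-vanishing). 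The price is more bookkeeping: in particular, Proposition \ref{p: Wdoesntgrowtoofast} as stated bounds $W_d$ for the unmollified $v^{(Q)}$, and its proof gives the upper bound uniformly in $\varepsilon\ll r$ but obtains the lower bound only in the limit; for your differential inequality on $[r,r_K]$ with $\varepsilon$ fixed you should say explicitly that you choose $\varepsilon$ small depending on $r$ so that $\sup_{t\in[r,r_K]}|W_d(t,Q,v^{(Q)}_\varepsilon)-W_d(t,Q,v^{(Q)})|\le r^{\alpha'}$ (legitimate, since you send $\varepsilon\downarrow0$ before $r\downarrow0$, and the same $\varepsilon$-after-$r$ ordering is used in the proof of Theorem \ref{t: growthrateforW}). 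With that made explicit, your argument is complete; the paper's contradiction route is shorter, while yours is more quantitative and closer in spirit to the frequency-function literature it cites.
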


\begin{proof}Let $Q\in\Gamma_d$. We first prove the existence of $D^{n-2+d}(\omega^-,Q)$. Consider arbitrary sequences $r_j\downarrow 0$ and $Q_j\in\Gamma_d$ with $Q_j\to Q$ such that
$$\delta:=\lim_{j\rightarrow\infty} \frac{\omega^-(B(Q_j,r_j))}{r_j^{n-2+d}}\in[0,\infty]
$$
exists. By passing to a subsequence, we know that $v^{(Q_j)}_{r_j} \rightarrow p$ for some $d$-homogenous harmonic polynomial $p$ by Lemma \ref{l:vblowups}.
By definition, $$ \frac{v^{(Q_j)}(r_jx+Q_j)}{r_j^d}  = \frac{\omega^-(B(Q_j,r_j))}{r_j^{n-2+d}} v^{(Q_j)}_{r_j}(x).$$
Thus, applying \eqref{e:finalboundonblowup} to $r$ and $r_j$ we get that (pick $j$ large enough so that $r_j < r$)
\begin{equation}\label{e:ktblowupsbounded} \int_{\partial B_1} \left| \frac{v^{(Q_j)}(rx+Q_j)}{r^d} - \frac{\omega^-(B(Q_j,r_j))}{r_j^{n-2+d}} v^{(Q_j)}_{r_j}(x)\right|^2 \leq C r^\beta.\end{equation}
By Remark \ref{r:ball-norm}, $\sup_{B(0,1)} |v^{(Q_j)}_{r_j}|\simeq 1$ for all sufficiently large $j$. Since $v^{(Q_j)}(rx+Q_j)=h(Q_j)u^+(rx+Q_j)- u^-(rx+Q_j)$ and $u^\pm$ and $h$ are continuous then
$\lim_{j\to\infty}v^{(Q_j)}(rx+Q_j)=h(Q)u^+(rx+Q)- u^-(rx+Q)=v^{(Q)}(rx+Q)$ uniformly on $x\in\partial B_1$. This yields $\delta<\infty$, otherwise we reach a contradiction by letting $j\rightarrow \infty$ in \eqref{e:ktblowupsbounded}.
If $\delta = 0$, then letting $j \rightarrow \infty$ in \eqref{e:ktblowupsbounded}, we have
\begin{equation}\label{e:dontdegenerate}
 \left(\frac{\omega^-(B(Q,r))}{r^{n-2+d}}\right)^2 \int_{\partial B_1} \left|v_r^{(Q)}(x)\right|^2=\int_{\partial B_1} \left| \frac{v^{(Q)}(rx+Q)}{r^d}\right|^2 < C r^\beta.\end{equation} Hence
 $$
   \int_{\partial B_1} \left|v_r^{(Q)}(x)\right|^2 < C\left(\frac{r^{n-2+d+\beta/2}}{\omega^-(B(Q,r))}\right)^2 \stackrel{r\downarrow 0}{\rightarrow} 0$$ by Lemma \ref{growthofballsatsingularpoints}. This contradicts the fact that $\sup_{x \in B(0,1)}|v_r^{(Q)}(x)| \simeq 1$. Thus $\delta\in(0,\infty)$. Since the sequences $r_j\downarrow 0$ and $Q_j\in\Gamma_d$ with $Q_j\to Q$ were arbitrary this shows that
\begin{equation}\label{density-tt-1}
0<\lambda(Q):=\liminf_{r\downarrow 0, P\in\Gamma_d, P\to Q}\frac{\omega^-(B(P,r))}{r^{n-2+d}}\leq \Lambda(Q):=\limsup_{r\downarrow 0, P\in\Gamma_d, P\to Q}\frac{\omega^-(B(P,r))}{r^{n-2+d}}<\infty.
\end{equation}
Let $r_j\downarrow 0$, $Q_j\in\Gamma_d$ with $Q_j\to Q$ and $s_\ell\downarrow 0$,  $P_\ell\in\Gamma_d$ with $P_\ell\to Q$ such that
\begin{equation}\label{density-tt-2}
\lambda(Q)=\lim_{\ell\rightarrow\infty} \frac{\omega^-(B(P_\ell,s_\ell))}{s_\ell^{n-2+d}}\qquad\hbox{ and }\qquad \Lambda(Q)=\lim_{j\rightarrow\infty} \frac{\omega^-(B(Q_j,r_j))}{r_j^{n-2+d}}
\end{equation}
Modulo passing to a subsequence (which we relabel), we know that $v^{(Q_j)}_{r_j} \rightarrow p$ for some $d$-homogenous harmonic polynomial $p$ and $v^{(P_\ell)}_{s_\ell} \rightarrow q$ for some $d$-homogenous harmonic polynomial $q$ by Lemma \ref{l:vblowups}. Our goal is to show that $\lambda(Q)=\Lambda(Q)$ and $p=q$. In order to do this, we estimate the following quantity for $s_j,\, r_j\le r$.  Using
\eqref{e:ktblowupsbounded}, we have
 \begin{eqnarray}\label{e:ktblowupsthesame-tt}
&& \int_{\partial B_1} \left| \frac{\omega^-(B(Q_j,r_{j}))}{r_{j}^{n-2+d}} v^{(Q_j)}_{r_{j}}(x)- \frac{\omega^-(B(P_j,s_{j}))}{s_{j}^{n-2+d}} v^{(P_j)}_{s_{j}}(x)\right|^2 \\
&&\qquad\lec  \int_{\partial B_1} \left| \frac{\omega^-(B(Q_j,r_{j}))}{r_{j}^{n-2+d}} v^{(Q_j)}_{r_{j}}(x)-\frac{v^{(Q_j)}(rx+Q_j)}{r^d}\right|^2\nonumber\\
&&\qquad +  \int_{\partial B_1} \left|\frac{v^{(Q_j)}(rx+Q_j)}{r^d}-\frac{v^{(P_j)}(rx+P_j)}{r^d}\right|^2\nonumber\\
&&\qquad +  \int_{\partial B_1} \left| \frac{\omega^-(B(P_j,s_{j}))}{s_{j}^{n-2+d}} v^{(P_j)}_{r_{j}}(x)-\frac{v^{(P_j)}(rx+P_j)}{r^d}\right|^2\nonumber\\
&&\le Cr^\beta + C  \int_{\partial B_1} \left|\frac{v^{(Q_j)}(rx+Q_j)}{r^d}-\frac{v^{(P_j)}(rx+P_j)}{r^d}\right|^2\nonumber.
 \end{eqnarray}
 Letting $j\to\infty$ in \eqref{e:ktblowupsthesame-tt} yields
 \begin{equation}\label{uniqueness-tt-1}
  \int_{\partial B_1} \left|\Lambda(Q)p-\lambda(Q)q\right|^2\le Cr^\beta\quad\text{for all $r>0$.}
 \end{equation}
Thus, letting $r\to 0$ in \eqref{uniqueness-tt-1}, we have
  \begin{equation}\label{uniqueness-tt-2}
  \int_{\partial B_1} \left|\Lambda(Q)p-\lambda(Q)q\right|^2=0.
 \end{equation}
 Since $p$ and $q$ are homogenous polynomials of degree $d$, \eqref{uniqueness-tt-2} implies that $\Lambda(Q)p=\lambda(Q)q$. By Remark \ref{r:ball-norm}, we conclude that $\Lambda(Q)=\lambda(Q)$ and $p(x) = q(x)$.
\end{proof}

\begin{definition}\label{def:ptilde}
For each $1\leq d\leq d_0$ and $Q \in \Gamma_d$, let $\tilde p^{(Q)}$ denote the homogeneous harmonic polynomial of degree $d$ defined by $$\tilde{p}^{(Q)} \equiv \left(\lim_{r\downarrow 0} \frac{\omega^-(B(Q,r))}{r^{n-2+d}}\right) p^{(Q)},$$ where $p^{(Q)}$ is the homogeneous harmonic polynomial which is the unique limit of $v_r^{(Q)}$.
\end{definition}

Note that $$\frac{v^{(Q)}(rx+Q)}{r^d} \rightarrow \tilde{p}^{(Q)}(x).$$

\begin{remark}\label{rem:uniformity}
Note that \eqref{density-exists} ensures that
 $$0 < \lim_{r\downarrow 0}\frac{\omega^-(B(Q,r))}{r^{n-2+d}} < \infty\quad\text{ for all }Q\in \Gamma_d.$$
Moreover, the proof of Theorem \ref{l:blowupsunique} gives a locally uniform bound on the density:
\begin{equation}\label{uniform-density-bound}
0 < \lim_{r\downarrow 0} \inf_{Q\in K\cap \Gamma_d}  \frac{\omega^-(B(Q,r))}{r^{n-2+d}} \leq  \lim_{r\downarrow 0} \sup_{Q\in K\cap \Gamma_d}  \frac{\omega^-(B(Q,r))}{r^{n-2+d}} < \infty\quad\text{for every }K \subset\subset \Gamma_d.
\end{equation}
\end{remark}

Now that we know the blowup of $v^{(Q)}$ is unique, a power rate of convergence follows immediately from \eqref{e:finalboundonblowup}.
\begin{corollary}\label{p:blowupatholderrate}
For every $\beta\in(0,\alpha)$, integer $d\geq 1$, and every compact set $K$ in $\Gamma_d$, there exists a constant $C > 0$ such that for all  $Q\in K$ and $0<r < 1$,
 \begin{equation}\label{holderlinfinity}\left\|\frac{v^{(Q)}(r\cdot+Q)}{r^d} - \tilde{p}^{(Q)}\right\|^2_{L^2(\partial B_1(0))} < Cr^\beta.\end{equation}
\end{corollary}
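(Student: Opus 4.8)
The plan is to leverage the key estimate \eqref{e:finalboundonblowup}, which asserts that for all $Q\in K$ and all $0<s<r\leq 1$,
\[
\int_{\partial B(0,1)}\left|\frac{v^{(Q)}(rx+Q)}{r^d}-\frac{v^{(Q)}(sx+Q)}{s^d}\right|^2\,d\sigma(x)\leq C_{\beta,K}\,r^\beta,
\]
together with Theorem \ref{l:blowupsunique}, which tells us that $v^{(Q)}(sx+Q)/s^d\to\tilde p^{(Q)}(x)$ uniformly on compact sets as $s\downarrow 0$ (recall the remark after Definition \ref{def:ptilde}). So the strategy is simply: fix $Q\in K$ and $0<r<1$, and in the displayed inequality above send $s\downarrow 0$. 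Since the convergence $v^{(Q)}(sx+Q)/s^d\to\tilde p^{(Q)}(x)$ is uniform on $\partial B(0,1)$ — which has finite surface measure — we may pass to the limit inside the integral (dominated convergence, or just uniform convergence on a finite-measure set), obtaining
\[
\int_{\partial B(0,1)}\left|\frac{v^{(Q)}(rx+Q)}{r^d}-\tilde p^{(Q)}(x)\right|^2\,d\sigma(x)\leq C_{\beta,K}\,r^\beta,
\]
which is exactly \eqref{holderlinfinity}.

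The one point requiring a word of care is the uniformity of the constant $C_{\beta,K}$ over $Q\in K$. But this is already built into \eqref{e:finalboundonblowup}, where $C_{\beta,K}$ depends only on $\beta$ and the compact set $K\subset\subset\Gamma_d$ (it came from Proposition \ref{p: Wdoesntgrowtoofast}, whose constant is uniform over $Q\in K$, and from the choice of $\tilde C$, which likewise depends only on $K,\beta,\alpha'',n,d$). Since the limiting inequality is obtained from \eqref{e:finalboundonblowup} with the same constant, the uniformity is preserved. The only other ingredient is that Theorem \ref{l:blowupsunique} guarantees the limit $\tilde p^{(Q)}$ exists and, crucially, is the \emph{same} limit regardless of the sequence $s_i\downarrow 0$ chosen; this is what makes the passage to the limit unambiguous and what distinguishes this corollary from the weaker pre-uniqueness estimate \eqref{e:finalboundonblowup}.

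There is no real obstacle here — the corollary is deliberately stated as an "immediate" consequence, and the work has all been done upstream in establishing \eqref{e:finalboundonblowup} and Theorem \ref{l:blowupsunique}. If anything, the only mild subtlety is bookkeeping the exponent: \eqref{e:finalboundonblowup} is stated for $\beta$ any exponent less than $\alpha$, and the corollary inherits exactly this range $\beta\in(0,\alpha)$, so no loss or adjustment of exponent occurs in this final step. One could alternatively phrase the limit via a Cauchy-sequence argument in $L^2(\partial B(0,1))$: \eqref{e:finalboundonblowup} shows $\{v^{(Q)}(sx+Q)/s^d\}$ is Cauchy in $L^2(\partial B_1)$ as $s\downarrow 0$, hence converges in $L^2$ to some limit, which must agree $\sigma$-a.e.\ with the locally uniform limit $\tilde p^{(Q)}$ from Theorem \ref{l:blowupsunique}; then fix $r$ and let $s\downarrow 0$ in the $L^2$ triangle inequality. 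Either route gives \eqref{holderlinfinity} in a few lines.
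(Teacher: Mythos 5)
Your proposal is correct and matches the paper's intended argument: the paper presents the corollary as an immediate consequence of \eqref{e:finalboundonblowup} together with the uniqueness of the blowup from Theorem \ref{l:blowupsunique} (and the convergence $v^{(Q)}(r\cdot+Q)/r^d\to\tilde p^{(Q)}$ noted after Definition \ref{def:ptilde}), i.e.\ exactly your step of fixing $r$ and letting $s\downarrow 0$ in \eqref{e:finalboundonblowup}. Nothing in your write-up diverges from or adds a gap to that route.
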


Finally, we prove that the blowups change in a H\"older continuous manner.

\begin{corollary}\label{c:blowupschangeholder}
For $\beta\in(0,\alpha)$, integer $d\geq 1$, and every compact set $K$ in $\Gamma_d$, there exists a constant $C > 0$ such that for all $Q_1,Q_2\in K$,
\begin{equation}\label{blowupschangeholdercenteredatzero}
\| \tilde{p}^{(Q_1)} - \tilde{p}^{(Q_2)}\|_{C(B_1)} \leq C|Q_1-Q_2|^{\alpha\beta/(\beta+2)}.
\end{equation} Moreover, for all $0<r\leq 1$ and $x\in B_{r/2}(Q_1)\cap B_{r/2}(Q_2)$,
\begin{equation}\label{blowupschangeholdermovingcenter}
|\tilde{p}^{(Q_1)}(x-Q_1) - \tilde{p}^{(Q_2)}(x-Q_2)| \leq Cr^d(r^{\beta/2} + |Q_1-Q_2|^{\alpha}).
\end{equation}
\end{corollary}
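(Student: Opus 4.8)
The plan is to derive both estimates from the $L^2(\partial B_1)$ convergence rate in Corollary~\ref{p:blowupatholderrate}, after first upgrading that rate to a pointwise one. Set $w^{(Q)}_r(z) := r^{-d}v^{(Q)}(rz+Q) - \tilde p^{(Q)}(z)$. Since $\tilde p^{(Q)}$ is harmonic and $v^{(Q)}$ is harmonic off $\partial\Omega$, the distributional Laplacian $\Delta w^{(Q)}_r$ is a rescaling of $\Delta v^{(Q)}$, which by \eqref{e:boundondistderivative} together with Lemma~\ref{growthofballsatsingularpoints} and the uniform density bound \eqref{uniform-density-bound} is a measure whose Newtonian potential on $B_1$ is $\lesssim r^\alpha$; applying Corollary~\ref{p:blowupatholderrate} at every scale $\le r$ and integrating in the radius gives $\|w^{(Q)}_r\|_{L^2(B_1)} \lesssim r^{\beta/2}$. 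The standard interior estimate for $\Delta w = \mu$ in $B_1$ (split $w$ into the Newtonian potential of $\mu$ plus a harmonic correction with the same boundary trace, and use the mean value property for the latter) then yields, uniformly in $Q\in K$ and since $\alpha>\beta/2$,
\[
\big|v^{(Q)}(x) - \tilde p^{(Q)}(x - Q)\big| \le C\, r^{d+\beta/2}, \qquad x \in B(Q, r/2).
\]

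Next I would prove \eqref{blowupschangeholdermovingcenter}. The key algebraic observation is that $v^{(Q)}$ depends on $Q$ only through the scalar $h(Q)$, so that $v^{(Q_1)} - v^{(Q_2)} = (h(Q_1) - h(Q_2))u^+$ and the $u^-$–terms cancel. For $x \in B(Q_1, r/2) \cap B(Q_2, r/2)$, the Caffarelli–Fabes–Mortola–Salsa estimate \cite[Lemma 4.8]{jerisonandkenig}, the comparability $\omega^+ \simeq \omega^-$ on balls (valid because $\log h$ is bounded), and \eqref{uniform-density-bound} give $|u^+(x)| \lesssim \omega^+(B(Q_1,r))/r^{n-2} \lesssim r^d$; combined with $|h(Q_1) - h(Q_2)| \lesssim |Q_1 - Q_2|^\alpha$ (from $\log h \in C^{0,\alpha}$, since $t\mapsto e^t$ is locally Lipschitz) this gives $|v^{(Q_1)}(x) - v^{(Q_2)}(x)| \lesssim |Q_1 - Q_2|^\alpha\, r^d$. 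Writing
\[
\tilde p^{(Q_1)}(x - Q_1) - \tilde p^{(Q_2)}(x - Q_2) = \big[\tilde p^{(Q_1)}(x-Q_1) - v^{(Q_1)}(x)\big] + \big[v^{(Q_1)}(x) - v^{(Q_2)}(x)\big] + \big[v^{(Q_2)}(x) - \tilde p^{(Q_2)}(x - Q_2)\big]
\]
and bounding the outer brackets by Step 1 (each point lies in the appropriate half-radius ball) yields \eqref{blowupschangeholdermovingcenter}.

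Then \eqref{blowupschangeholdercenteredatzero} follows from \eqref{blowupschangeholdermovingcenter} by optimizing a scale. I would fix $r$ with $|Q_1 - Q_2| \ll r \le 1$ and, for $x \in B(Q_1,r/2) \cap B(Q_2,r/2)$, insert the term $\tilde p^{(Q_2)}(x - Q_1)$: by \eqref{blowupschangeholdermovingcenter} and the bound $|\nabla \tilde p^{(Q_2)}| \lesssim r^{d-1}$ on $B(0,r)$ (homogeneity of degree $d$ together with $\|\tilde p^{(Q_2)}\|_{C(B_1)} \lesssim 1$ from Remark~\ref{rem:uniformity}), one gets $|\tilde p^{(Q_1)}(w) - \tilde p^{(Q_2)}(w)| \lesssim r^d(r^{\beta/2} + |Q_1 - Q_2|^\alpha) + r^{d-1}|Q_1 - Q_2|$ for all $w \in B(0,r/4)$. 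Since $\tilde p^{(Q_1)} - \tilde p^{(Q_2)}$ is homogeneous of degree $d$, dividing by $r^d$ bounds $\|\tilde p^{(Q_1)} - \tilde p^{(Q_2)}\|_{C(B_1)}$ by $r^{\beta/2} + |Q_1 - Q_2|^\alpha + |Q_1 - Q_2|/r$; choosing $r \simeq |Q_1 - Q_2|^{2/(\beta+2)}$ to balance the first and last terms gives the estimate \eqref{blowupschangeholdercenteredatzero} (for $|Q_1-Q_2|$ bounded away from zero the bound is trivial after adjusting constants).

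The hard part will be Step 1 — passing from the $L^2(\partial B_1)$ rate of Corollary~\ref{p:blowupatholderrate} to a pointwise rate on $B_{1/2}$ — because this is exactly where the non-variational nature of the problem enters: $v^{(Q)}$ is not harmonic, so one must control $\Delta v^{(Q)}$ as a measure, and the convergence of its Newtonian potential hinges on the sharp local-dimension information of Lemma~\ref{growthofballsatsingularpoints} and its uniform version \eqref{uniform-density-bound}. Everything after that is interpolation-of-scales bookkeeping; the only other point requiring care is keeping all constants uniform over $Q$ in the compact set $K\subset\Gamma_d$, which is why each ingredient invoked above (Corollary~\ref{p:blowupatholderrate}, the CFMS bound, and \eqref{uniform-density-bound}) is stated with uniform constants on compact subsets of $\Gamma_d$.
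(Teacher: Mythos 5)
The gap is in your Step 1. The assertion that the Newtonian potential of $\Delta w^{(Q)}_r$ on $B_1$ is $\lesssim r^\alpha$ does not follow from \eqref{e:boundondistderivative}, Lemma \ref{growthofballsatsingularpoints} and \eqref{uniform-density-bound}: those estimates control $\omega^-(B(P,s))$ only for centers $P$ in the compact set $K\subset\Gamma_d$, whereas a pointwise bound on the potential at a point $X\in B(Q,r/2)$ amounts to the non-concentration estimate $\int_{B(Q,r)}|y-X|^{2-n}\,d\omega^-(y)\lesssim r^{d}$, i.e.\ quantitative growth of $\omega^-$ on balls centered at \emph{arbitrary} nearby boundary points (which generically lie in $\Gamma_1$, where the density of $\omega^-$ degenerates like $\dist(\cdot,Q)^{d-1}$) at \emph{all} small scales. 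Total mass $\lesssim r^\alpha$ is not enough for a pointwise potential bound, and the crude estimate $\omega^-(B(P,s))\lesssim s^{n-1}$ coming from the local Lipschitz character of $u^-$ only yields $\int_{B(Q,r)}|y-X|^{2-n}\,d\omega^-(y)\lesssim r$, which is far weaker than the required $r^d$ once $d\geq 2$. Since $v^{(Q)}-\tilde p^{(Q)}(\cdot-Q)$ is not harmonic across $\partial\Omega$, there is also no mean-value shortcut; so the pointwise bound $|v^{(Q)}(x)-\tilde p^{(Q)}(x-Q)|\leq Cr^{d+\beta/2}$ on $B(Q,r/2)$, on which your Step 2 leans, is not established by the argument you give (it is plausible, but proving it would need new estimates not in the paper).

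The good news is that this step is avoidable and the rest of your plan is sound. Keep your three-term decomposition in Step 2, but run it in $L^2$-average form over $B_{r/2}(x)$: the solid-ball version of Corollary \ref{p:blowupatholderrate} (integrate \eqref{holderlinfinity} in the radius) controls the two outer terms, the identity $v^{(Q_1)}-v^{(Q_2)}=(h(Q_1)-h(Q_2))u^+$ together with the CFMS bound and \eqref{uniform-density-bound} controls the middle term, and the mean value property is then applied only to the \emph{harmonic} function $\tilde p^{(Q_1)}(\cdot-Q_1)-\tilde p^{(Q_2)}(\cdot-Q_2)$ to pass from the average to the value at $x$; this gives \eqref{blowupschangeholdermovingcenter} and is precisely the paper's proof of that estimate. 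Your Step 3 — deducing \eqref{blowupschangeholdercenteredatzero} from \eqref{blowupschangeholdermovingcenter} by homogeneity of $\tilde p^{(Q_1)}-\tilde p^{(Q_2)}$, a gradient bound on $\tilde p^{(Q_2)}$, and the choice $r\simeq|Q_1-Q_2|^{2/(\beta+2)}$ — is correct and genuinely different from the paper, which instead compares the rescalings $\rho^{-d}v^{(Q_i)}(\rho\,\cdot+Q_i)$ directly and uses equivalence of norms on the finite-dimensional space of degree-$d$ harmonic polynomials; your route even yields the slightly better exponent $\min\{\alpha,\beta/(\beta+2)\}\geq\alpha\beta/(\beta+2)$, which of course still implies \eqref{blowupschangeholdercenteredatzero}.
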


We will end up applying this inequality when

\begin{proof} When $Q_1, Q_2$ are far apart, \eqref{blowupschangeholdercenteredatzero} holds with $C$ large, since $\| \tilde{p}^Q\|_{L^\infty}$ is uniformly bounded over $Q\in K\cap \Gamma_d$  by Remark \ref{rem:uniformity}. Thus, to prove \eqref{blowupschangeholdercenteredatzero}, we may assume that $Q_1, Q_2$ are close enough together so that $|Q_1-Q_2|^{1/2d}< 1/2$. Pick $\rho > 0$ such that $|Q_1-Q_2|^{1-\alpha} \geq \rho \geq |Q_1- Q_2|$. Because $h\in C^{0,\alpha}(\partial\Omega)$,
\begin{equation}\label{6.11A}
\begin{aligned}
&\left|\frac{v^{(Q_1)}(\rho x+Q_1)}{\rho^d} - \frac{v^{(Q_2)}(\rho x+Q_2)}{\rho^d}\right| \\ &\qquad\leq \frac{C|Q_1-Q_2|^\alpha}{\rho^d}\sup_{y\in B_{C\rho}(Q_1)}\left(|Q_1 - Q_2|^{1-\alpha}|\nabla u^{\pm}| + |u^+|\right)\\
&\qquad\leq C\frac{|Q_1-Q_2|^\alpha \omega^{\pm}(B(Q_1, C\rho))}{\rho^{d+n-2}}\left(\frac{|Q_1-Q_2|^{1-\alpha}}{\rho} + 1\right) \\
&\qquad\stackrel{\eqref{uniform-density-bound}}{\leq} C|Q_1-Q_2|^\alpha\left(1+\frac{|Q_1 - Q_2|^{1-\alpha}}{\rho}\right) \leq C\left(\frac{|Q_1-Q_2|}{\rho}\right),
\end{aligned}
\end{equation} where the second inequality follows because $u^{\pm}_j$ is bounded in $L^\infty$ (recall Remark \ref{r:weakstarconvergence}) and the last inequality follows from our choice of $\rho$. Combining \eqref{holderlinfinity} and \eqref{6.11A}, we obtain
\begin{equation}\label{p-estimate}
\| \tilde{p}^{(Q_1)} - \tilde{p}^{(Q_2)}\|^2_{L^2(\partial B_1(0))}\le C\rho^\beta + C\left(\frac{|Q_1-Q_2|^\alpha}{\rho}\right)^2.
\end{equation}
Since ${\tilde p}^{(Q_i)}$ for $i=1,2$ are homogeneous harmonic polynomials of bounded degree, \eqref{p-estimate} plus equivalence of norms on finite dimensional vector spaces yield
\begin{equation}\begin{split}\label{p-estimate-cont}
\| \tilde{p}^{(Q_1)} - \tilde{p}^{(Q_2)}\|^2_{L^\infty( B_1(0))} &\le C \| \tilde{p}^{(Q_1)} - \tilde{p}^{(Q_2)}\|^2_{L^2(\partial B_1(0))}\\
&\le C\rho^\beta + C\left(\frac{|Q_1-Q_2|^\alpha}{\rho}\right)^2.\nonumber
\end{split}\end{equation}
Taking $\rho^{\beta/2} = |Q_1-Q_2|^{\alpha}/r$, we obtain \eqref{blowupschangeholdercenteredatzero} (note that if $\rho^{\beta/2 + 1} = |Q_1 - Q_2|^\alpha$, then $|Q_1- Q_2|^{1-\alpha} > \rho > |Q_1- Q_2|$ so our choices are compatible).

We turn to proving \eqref{blowupschangeholdermovingcenter} and let $0 < r \leq 1$. Estimate \eqref{holderlinfinity} implies $$\fint_{B_r(Q)} |v^{(Q)}(z) - \tilde{p}^{(Q)}(z-Q)|^2 \leq Cr^{2d+\beta}.$$ (To pass from integrating on the sphere to integrating on the ball, integrate \eqref{holderlinfinity} in $r$.) Furthermore, for $z\in B_{r}(Q_1)\cap B_{r}(Q_2)$,
$$|v^{(Q_1)}(z)-v^{(Q_2)}(z)| \leq C|Q_1-Q_2|^\alpha |u^+(z)| \leq C|Q_1-Q_2|^\alpha\dfrac{\omega^+(B(Q_1,r))}{r^{n-2}}.$$ Combining these estimates and noting that $x\in B_{r/2}(Q_1)\cap B_{r/2}(Q_2)$ implies that $B_{r/2}(x) \subset B_r(Q_1)\cap B_r(Q_2)$, we obtain $$\fint_{B_{r/2}(x)}|\tilde{p}^{(Q_1)}(y-Q_1) - \tilde{p}^{(Q_2)}(y-Q_2)|^2 \leq C|Q_1-Q_2|^{2\alpha} \left(\dfrac{\omega^+(B(Q_1,r))}{r^{n-2}}\right)^2 + Cr^{2d+\beta}.$$ Because $Q_1\in K$, we have $r^{-(n-2)}\omega^+(B(Q_1,r)) \leq Cr^d$ by Remark \ref{rem:uniformity}. Taking square roots yields \eqref{blowupschangeholdermovingcenter} (since the average of the $L^2$ norm on the set $B_{r/2}(x)$ bounds the value at $x$ for any harmonic function).\end{proof}

\section{Higher order rectifiability of the singular set}\label{s: rectifiability}

In this section, we complete our proof of Theorem \ref{t:main} and show that the singular set is contained in a countable union of $C^{1,\beta}$ manifolds of dimension $\leq n-3$. To state our result rigorously, we need to introduce a notion of a ``dimension" of a point.

\begin{definition}[cf. {\cite[Definition 1.3.7]{garofaloandpetrosyan}}] \label{ddependence}
For each $Q\in \partial \Omega \backslash \Gamma_1$ define the {\it dimension} of the point, $d(Q)$, by $$d(Q) = \dim\{\zeta\in \mathbb R^n \mid \zeta \cdot \nabla p^{(Q)}(x) = 0\text{ for all }x\in \mathbb R^n\}.$$ Sometimes we abuse terminology and refer to $d(Q)$ as the {\it dimension} of $p^{(Q)}$.

For $d,j \in \mathbb N$, define $\Gamma_d^j = \{Q\in \Gamma_d\mid d(Q) = j\}$.
\end{definition}

\begin{remark}\label{propertiesofd}
If $Q \in\partial\Omega\setminus \Gamma_1$, then $d(Q) \leq n-3$, because $p^{(Q)}$ is a homogeneous harmonic polynomial of degree at least 2 and the set $\RR^n\setminus\{p^{(Q)}=0\}$ has two connected components. See the introduction of \cite{BETHarmonicpoly} for details.
\end{remark}

Theorem \ref{t:main} follows from the following proposition.

\begin{proposition}\label{singularsetinmanifolds}
For every $\beta\in(0,\alpha)$, $d\in \mathbb N$, and $0 \leq j \leq n-3$, the set $\Gamma_d^j$ is contained in a countable union of $j$-dimensional $C^{1,\beta}$ manifolds.  \end{proposition}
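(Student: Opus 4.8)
The plan is to run the classical Whitney-type decomposition argument, upgraded to $C^{1,\beta}$ regularity using the Hölder continuity of the blowups established in Corollary \ref{c:blowupschangeholder}, exactly as one does in the variational obstacle problem (cf.\ \cite{garofaloandpetrosyan}, \cite{gpsvgepiperimetric}). First I would fix $\beta\in(0,\alpha)$, an integer $d$, and $0\le j\le n-3$, and stratify $\Gamma_d^j$ further by the ``nondegeneracy scale'' of the blowup polynomial: for $m\in\mathbb N$ let $\Gamma_{d,m}^j$ be the set of $Q\in\Gamma_d^j$ such that the $j$-dimensional subspace $V(Q):=\{\zeta:\zeta\cdot\nabla p^{(Q)}\equiv 0\}$ (the ``spine'' of $p^{(Q)}$) is quantitatively the exact invariance subspace, i.e.\ $\inf_{|\zeta|=1,\ \zeta\perp V(Q)}\ \|\zeta\cdot\nabla \tilde p^{(Q)}\|_{L^2(\partial B_1)}\ge 1/m$ and $\|\tilde p^{(Q)}\|_{L^\infty(B_1)}\le m$; then $\Gamma_d^j=\bigcup_m \Gamma_{d,m}^j$, so it suffices to cover each $\Gamma_{d,m}^j$, and by a further subdivision we may assume it lies in a small ball $B_{r_0}(Q_0)$ of a compact $K\subset\subset\Gamma_d$.

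Next I would prove the key ``dimension rigidity at nearby scales'' estimate: there are $r_1>0$ and $C$ (depending on $m$, $K$, $\beta$, $n$, $d$) such that for $Q_1,Q_2\in\Gamma_{d,m}^j$ with $|Q_1-Q_2|=:s\le r_1$, writing $V_i=V(Q_i)$, one has $\dist_H\big(V_1\cap B_1,\ V_2\cap B_1\big)\le C s^{\beta'}$ for some $\beta'=\beta'(\alpha,\beta)>0$. This is the heart of the matter. The mechanism is: choosing $r\sim s$ in \eqref{blowupschangeholdermovingcenter} gives $|\tilde p^{(Q_1)}(x-Q_1)-\tilde p^{(Q_2)}(x-Q_2)|\le C s^{d+\beta/2}$ on $B_{s/2}(Q_1)\cap B_{s/2}(Q_2)$; rescaling by $s$ and using $d$-homogeneity, the rescaled polynomials $\tilde p^{(Q_i)}\big(s\,\cdot + (Q_i-x)/s\big)$ differ by $C s^{\beta/2}$ in $C^0(B_{1/2})$, hence (equivalence of norms on the finite-dimensional space of degree-$\le d$ polynomials, plus interior estimates) in $C^1$, and in particular $Q_1-Q_2$ is an ``almost-invariance direction'' of $\tilde p^{(Q_1)}$ up to error $s^{\beta/2}$. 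Combined with the quantitative nondegeneracy $\ge 1/m$, this forces $(Q_1-Q_2)/s$ to lie within $C m\, s^{\beta/2}$ of $V_1$, and symmetrically of $V_2$; a standard linear-algebra/compactness argument (the ``almost-invariance subspace is close to the invariance subspace'') then upgrades this to closeness of the subspaces $V_1,V_2$ themselves. I expect this step — extracting subspace closeness with a \emph{power} rate from the pointwise Hölder bound on blowups, uniformly over the stratum — to be the main obstacle, largely because one must be careful that the quantitative nondegeneracy of the spine is stable (this is exactly why the stratification into $\Gamma_{d,m}^j$ is needed) and that the errors in \eqref{blowupschangeholdermovingcenter} are applied at the correct scale $r\sim|Q_1-Q_2|^{\theta}$ for an optimized $\theta$.

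Finally I would conclude by a Whitney-extension / Reifenberg-graph argument. The estimate above says that near each $Q\in\Gamma_{d,m}^j$ the set $\Gamma_{d,m}^j$ is trapped, at scale $s$, in a $Cs^{1+\beta'}$-neighborhood of the affine plane $Q+V(Q)$, and that these planes vary $\beta'$-Hölder-continuously along the set (Lipschitz in $Q$ with Hölder-continuous ``derivative'' $V(Q)$). By the $C^{1,\beta'}$ Whitney extension theorem (or equivalently Reifenberg's topological-disk theorem in its $C^{1,\beta'}$ form, e.g.\ as in \cite{rectifiable-Reifenberg}), there is a $C^{1,\beta'}$ $j$-dimensional manifold $M\supset \Gamma_{d,m}^j\cap B_{r_2}(Q_0)$. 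Covering $\Gamma_{d,m}^j$ by countably many such balls and then taking the union over $m$, $Q_0$ (and over the countably many possible $d$, $j$), we cover $\Gamma_d^j$ by countably many $C^{1,\beta'}$ $j$-dimensional manifolds. Since $\beta\in(0,\alpha)$ was arbitrary and $\beta'$ can be taken of the form $\beta'=c(\alpha)\beta$ (one checks the bookkeeping gives $\beta'$ as close to $\alpha$ as desired, in particular any value in $(0,\alpha/2)$ as in Theorem \ref{t:main}), this yields the proposition; summing over $d$ and $j\le n-3$ and recalling $\partial\Omega\setminus\Gamma_1=\bigcup_{d\ge 2,\,j\le n-3}\Gamma_d^j$ gives Theorem \ref{t:main}.
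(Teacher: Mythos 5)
Your strategy is workable, but it is genuinely different from the proof in the paper. The paper runs the Garofalo--Petrosyan route at order $d$: it defines compact pieces $K_{d,m}\subset\Gamma_d$ by two-sided growth bounds on $u^\pm$, checks (Lemma \ref{blowupssatisfywhitney}, using \eqref{blowupschangeholdermovingcenter} and interior estimates for the harmonic difference $\tilde p^{(Q_1)}(\cdot-Q_1)-\tilde p^{(Q_2)}(\cdot-Q_2)$) that the family $\{\tilde p^{(Q)}(\cdot-Q)\}_{Q\in K_{d,m}}$ satisfies the Whitney--Glaeser compatibility conditions with $k=d$, extends to a single $F\in C^{d,\beta}$ via Theorem \ref{holderwhitneyextension}, and then, for $x_0\in\Gamma_d^j$, picks $n-j$ multi-indices of order $d-1$ with $\nabla D^{\chi_i}\tilde p^{(x_0)}(0)$ linearly independent and applies the implicit function theorem to $\tilde F=(D^{\chi_1}F,\dots,D^{\chi_{n-j}}F)$, whose zero set contains $K_{d,m}$. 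The only nondegeneracy used is the rank condition at the single point $x_0$, so no quantitative stratification of the spine is needed. You instead work entirely at first order: you stratify by a uniform lower bound $1/m$ on $\|\zeta\cdot\nabla\tilde p^{(Q)}\|$ for $\zeta\perp V(Q)$, use Corollary \ref{c:blowupschangeholder} plus equivalence of norms on the finite-dimensional polynomial space to show that $(Q_1-Q_2)/|Q_1-Q_2|$ is an almost-invariance direction (hence close to the spine, giving the $s^{1+\beta'}$ trapping) and that the spines vary with a H\"older rate, and then invoke a first-order Whitney extension ($k=1$ in Theorem \ref{holderwhitneyextension}) or a $C^{1,\beta}$ Reifenberg-graph statement. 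What your approach buys is a more transparently geometric picture of the stratum; what it costs is the extra stratification, the almost-invariance linear algebra, and some exponent: your plane-variation rate coming from \eqref{blowupschangeholdercenteredatzero} is $\alpha\beta/(\beta+2)<\alpha/2$, so the claim that the bookkeeping gives $\beta'$ ``as close to $\alpha$ as desired'' is not justified (the paper itself, via Lemma \ref{blowupssatisfywhitney}, only reaches exponents below $\alpha/2$, which is all Theorem \ref{t:main} asserts). Two small repairs: with $r\sim|Q_1-Q_2|$ exactly, $B_{r/2}(Q_1)\cap B_{r/2}(Q_2)$ is degenerate, so take $r$ a fixed multiple (say $4|Q_1-Q_2|$) before rescaling, and note that \cite{rectifiable-Reifenberg} is not the right citation for containment in a single $C^{1,\beta'}$ manifold -- the Whitney route you also mention (graphing over $V(Q_0)$, with the trapping and plane-variation estimates furnishing the $k=1$ compatibility conditions) is the correct way to close the argument.
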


From here the proof of Proposition \ref{singularsetinmanifolds} follows a now classical argument using the implicit function theorem (cf. the proof of Theorem 1.3.8 in \cite{garofaloandpetrosyan} for a similar approach
to study the singular set in the thin obstacle problem). We include the main statements here for completeness, but omit some details when the two-phase problem for harmonic measure provides no additional complications.

An extension of the classical Whitney extension theorem with respect to arbitrary modulus of continuity was established by Glaeser \cite{glaeser}. We use the following special case,  which provides criteria to guarantee that an extended function has H\"older continuous derivatives.

\begin{theorem}[see e.g.~{\cite[Theorem 15]{lewisconepoints}}] \label{holderwhitneyextension}
Let $\beta \in (0,1), k, \ell, n \in \mathbb N, A\subset \mathbb R^n$ be closed, and for each $a\in A$ a polynomial $P_a: \mathbb R^n \rightarrow \mathbb R^\ell$ such that $\deg P_a \leq k$. Define for $K \subseteq A, r > 0$ and multi-index $\alpha$ with $0 \leq |\alpha| \leq k$, $$ \rho_\alpha(K,r) = \sup\left\{\frac{|D^\alpha P_b(b) - D^\alpha P_a(b)|}{|a-b|^{k-|\alpha|}}: a,b \in A,\; |a-b| \leq r\right\}.$$ If for each compact $K\subset A$ and each multi-index $\alpha$ with $0 \leq |\alpha| \leq k$ $$\rho_\alpha(K,r) \leq Cr^\beta$$ then there exists $F\in C^{k,\beta}_{\mathrm{loc}}(\mathbb R^n; \mathbb R^\ell)$ such that for all $a\in A$ and multi-index $\alpha$, $D^\alpha F(a) = D^\alpha P_a(a)$.
\end{theorem}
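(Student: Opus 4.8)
The statement is the classical Whitney extension theorem with a H\"older modulus of continuity, and the plan is to run the standard Whitney construction; the only conceptual point is to recognize that the hypothesis on $\rho_\alpha$ is exactly the $C^{k,\beta}$ compatibility condition for the field $(P_a)_{a\in A}$. We may assume $A\neq\emptyset$ (otherwise $F\equiv 0$ works). Fix a compact $K\subset A$; the hypothesis supplies a constant $C=C_K$ with $\rho_\alpha(K,r)\le Cr^\beta$ for all $0\le|\alpha|\le k$ and all $r>0$, and all constants below depend only on $K,n,k,\beta$. Since each $P_a$ has degree $\le k$, its Taylor polynomial of order $k$ at any point equals $P_a$ itself; hence $\rho_\alpha(K,r)$ measures precisely the discrepancy between $D^\alpha P_b(b)$ and the $\alpha$-th Taylor coefficient at $b$ of $P_a$, and the hypothesis says that for $a,b\in K$ and all $|\alpha|\le k$,
$$|D^\alpha P_a(b)-D^\alpha P_b(b)|\le C\,|a-b|^{\,k-|\alpha|+\beta}.$$
This is the $C^{k,\beta}$ Whitney--Glaeser jet condition, which is all we will use.

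Next I would assemble the usual building blocks. Take a Whitney decomposition $\{Q_i\}$ of the open set $\RR^n\setminus A$ into dyadic cubes with $\diam Q_i\le\dist(Q_i,A)\le 4\diam Q_i$, and a smooth partition of unity $\{\varphi_i\}$ with $\spt\varphi_i\subset Q_i^*:=\tfrac98 Q_i$, $\sum_i\varphi_i\equiv 1$ on $\RR^n\setminus A$, bounded overlap of the $Q_i^*$, and $|D^\gamma\varphi_i|\le C_\gamma(\diam Q_i)^{-|\gamma|}$. For each $i$ pick $a_i\in A$ realizing $\dist(Q_i,A)$, and for $x\in\RR^n\setminus A$ pick $a_x\in A$ with $|x-a_x|=\dist(x,A)=:\delta_x$. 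Define
$$F(x):=P_a(a)\ \text{ if } x=a\in A,\qquad F(x):=\sum_i\varphi_i(x)\,P_{a_i}(x)\ \text{ if } x\in\RR^n\setminus A.$$
The sum is locally finite, so $F\in C^\infty(\RR^n\setminus A)$. Moreover, if $\varphi_i(x)\neq 0$ then $x\in Q_i^*$ forces $\diam Q_i\lesssim\delta_x$ and $|x-a_i|\lesssim\delta_x$, hence $|a_i-a_x|\lesssim\delta_x$; this ``scales are comparable'' fact is used repeatedly.

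The heart of the matter is the estimate near $A$. For $x\notin A$ and $|\alpha|\le k$, Leibniz' rule together with $\sum_i D^\gamma\varphi_i\equiv 0$ on $\RR^n\setminus A$ for $\gamma\neq 0$ gives
$$D^\alpha F(x)=D^\alpha P_{a_x}(x)+\sum_i\sum_{0\neq\gamma\le\alpha}\binom{\alpha}{\gamma}D^\gamma\varphi_i(x)\bigl(D^{\alpha-\gamma}P_{a_i}(x)-D^{\alpha-\gamma}P_{a_x}(x)\bigr).$$
Taylor-expanding $D^{\alpha-\gamma}P_{a_i}$ and $D^{\alpha-\gamma}P_{a_x}$ in powers $(x-a_x)^\mu$ about $a_x$, bounding each coefficient difference $|D^{\alpha-\gamma+\mu}P_{a_i}(a_x)-D^{\alpha-\gamma+\mu}P_{a_x}(a_x)|\lesssim|a_i-a_x|^{\,k-|\alpha|+|\gamma|-|\mu|+\beta}$ by the jet condition, and using $|x-a_x|\lesssim\delta_x$, $|a_i-a_x|\lesssim\delta_x$, $|D^\gamma\varphi_i(x)|\lesssim\delta_x^{-|\gamma|}$ and bounded overlap, all powers of $\delta_x$ combine to yield
$$\bigl|D^\alpha F(x)-D^\alpha P_{a_x}(x)\bigr|\le C\,\delta_x^{\,k-|\alpha|+\beta},\qquad x\in\RR^n\setminus A,\ |\alpha|\le k.$$
The jet condition also shows that $x\mapsto D^\alpha P_{a_x}(x)$ extends continuously to $A$ with value $D^\alpha P_a(a)$ at $a\in A$; combined with the last display, $D^\alpha F(x)\to D^\alpha P_a(a)$ as $x\to a\in A$. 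A standard induction on $|\alpha|$, $0\le|\alpha|\le k$, then shows that the function equal to $D^\alpha F$ on $\RR^n\setminus A$ and to $D^\alpha P_a(a)$ on $A$ is continuous on $\RR^n$ and is the classical $\alpha$-th partial derivative of $F$; the only new input at each step is the above limit together with the fundamental theorem of calculus on segments, split at points where the segment meets $A$. In particular $F\in C^k(\RR^n)$ and $D^\alpha F(a)=D^\alpha P_a(a)$ for $a\in A$.

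Finally I would check that $D^\alpha F$ is $\beta$-H\"older for $|\alpha|=k$, by the usual case analysis on $x,y$: (i) if $x,y\in A$ it is the hypothesis; (ii) if $x,y\notin A$ and $|x-y|\ge\tfrac12\max(\delta_x,\delta_y)$, compare $D^\alpha F(x),D^\alpha F(y)$ to the constants $D^\alpha P_{a_x},D^\alpha P_{a_y}$ (constant since $|\alpha|=k$) using the displayed estimate (which is $\lesssim\delta_x^\beta\lesssim|x-y|^\beta$, resp.\ for $y$) and the jet bound $|D^\alpha P_{a_x}-D^\alpha P_{a_y}|\lesssim|a_x-a_y|^\beta\lesssim|x-y|^\beta$; (iii) if $x,y\notin A$ and $|x-y|<\tfrac12\max(\delta_x,\delta_y)$, note that carrying the same Leibniz computation one order further gives $|D^{k+1}F|\lesssim\delta_z^{\beta-1}$ on $\RR^n\setminus A$, so the segment $[x,y]$ (on which $\delta_z\gtrsim\delta_x$) and the mean value theorem give $|D^\alpha F(x)-D^\alpha F(y)|\lesssim|x-y|\,\delta_x^{\beta-1}\lesssim|x-y|^\beta$; (iv) for $x\in A$, $y\notin A$, pick $a\in A$ with $|a-y|=\delta_y\le|x-y|$ and combine (i) with (ii) or (iii). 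Since $A$ is closed the construction above is global, and it produces $k$-th derivatives that are H\"older near every compact $K\subset A$ (with constant $C_K$), so $F\in C^{k,\beta}_{\loc}(\RR^n;\RR^\ell)$, as claimed. The main obstacle is entirely bookkeeping: organizing the Taylor-remainder terms in the Leibniz expansions so that the powers of $\delta_z$ and $|a_i-a_x|$ cancel as stated, and keeping the four cases of the H\"older estimate straight; there is no conceptual difficulty beyond the classical Whitney argument.
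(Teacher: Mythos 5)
This statement is not proved in the paper at all: it is quoted verbatim as a known special case of Glaeser's Whitney-type extension theorem (the paper points to \cite{lewisconepoints}, Theorem 15, and \cite{glaeser}), so there is no in-paper proof to compare against. Your sketch is the standard Whitney construction with H\"older modulus — Whitney cubes, partition of unity, $F=\sum_i\varphi_iP_{a_i}$ off $A$, the estimate $|D^\alpha F(x)-D^\alpha P_{a_x}(x)|\lesssim\delta_x^{k-|\alpha|+\beta}$, and the four-case H\"older argument — which is exactly the argument underlying the cited references, and it is correct in outline. One bookkeeping slip: in your Leibniz identity the $\gamma=0$ term $\sum_i\varphi_i(x)\bigl(D^{\alpha}P_{a_i}(x)-D^{\alpha}P_{a_x}(x)\bigr)$ does not vanish (only the terms with $\gamma\neq0$ benefit from $\sum_iD^\gamma\varphi_i\equiv0$); it is, however, bounded by the same Taylor-expansion/jet estimate $\lesssim\delta_x^{k-|\alpha|+\beta}$, so the displayed conclusion and everything downstream are unaffected.
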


For $m,\, d\in\mathbb N$ define the set $$K_{d,m} = \{Q\in B_m(0)\cap \partial \Omega \mid \frac{1}{m} \rho^d \leq \sup_{\partial B_\rho(Q)} |u^\pm(x)| \leq m \rho^d, \forall \rho \in [0,1]\}.$$ Clearly $K_{d,m}$ is compact. Furthermore, by Remark \ref{rem:uniformity} since $\sup_{B_r(Q)} u^{\pm} \leq C\frac{\omega^-(B_r(Q))}{r^{n-2}}$ we know that $K_{d,m} \subset \Gamma_d$ and $\Gamma_d = \bigcup_m K_{d,m}$.

\begin{lemma}\label{blowupssatisfywhitney}
Fix a $d\in \mathbb N$. For any $m \in \mathbb N$ the polynomials $\{\tilde{p}^{(Q)}(x-Q) \mid Q \in K_{m,d}\}$ satisfy the conditions of Theorem \ref{holderwhitneyextension}, with $k = d$, for any $\beta \in (0,\alpha/2)$. In particular, there is a function $F \in C^{d,\beta}(B_m(0))$ such that for all $Q\in K_{m,d}$ and multi-indices $\chi$ with $|\chi| \leq d$ we have $D^\chi F(Q) = D^\chi p^{(Q)}(0)$.
\end{lemma}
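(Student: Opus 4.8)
The plan is to verify the hypotheses of Theorem \ref{holderwhitneyextension} directly, using the two estimates from Corollary \ref{c:blowupschangeholder} as the engine. Set $A = K_{m,d}$, $k = d$, $\ell = 1$, and for each $Q \in K_{m,d}$ let $P_Q(x) := \tilde p^{(Q)}(x - Q)$, a polynomial of degree $d$ (homogeneous of degree $d$ about $Q$). Since $K_{m,d} \subset \Gamma_d$ and $K_{m,d}$ is compact, all the constants supplied by Corollary \ref{c:blowupschangeholder} (with $K = K_{m,d}$) are uniform over the relevant points, and by Remark \ref{rem:uniformity} the densities $\lim_{r\downarrow 0} r^{-(n-2+d)}\omega^-(B(Q,r))$ are bounded above and below on $K_{m,d}$; hence $\|\tilde p^{(Q)}\|_{C(B_1)}$ and all coefficients of $P_Q$ are uniformly bounded.

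The main point is to bound $\rho_\chi(K_{m,d}, r)$ for each multi-index $\chi$ with $0 \le |\chi| \le d$. Because $P_Q$ is a polynomial of degree $d$, Taylor expansion about $Q$ gives, for $a, b \in K_{m,d}$,
\begin{equation*}
D^\chi P_b(b) - D^\chi P_a(b) = D^\chi P_b(b) - \sum_{|\eta| \le d - |\chi|} \frac{D^{\chi + \eta} P_a(a)}{\eta!}(b-a)^\eta,
\end{equation*}
so controlling $|D^\chi P_b(b) - D^\chi P_a(b)|$ amounts to comparing all derivatives $D^{\chi+\eta} P_a(a)$ and $D^{\chi+\eta} P_b(b)$, i.e. comparing the polynomials $P_a(\cdot - a + a) = \tilde p^{(a)}$ and $\tilde p^{(b)}$ together with their translates. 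For the top-order derivatives ($|\chi| = d$) the derivative $D^\chi P_Q$ is a constant (independent of the base point up to the translation, since $\tilde p^{(Q)}$ is $d$-homogeneous), and $|D^\chi \tilde p^{(a)} - D^\chi \tilde p^{(b)}| \le C\|\tilde p^{(a)} - \tilde p^{(b)}\|_{C(B_1)} \le C|a-b|^{\alpha\beta/(\beta+2)}$ by \eqref{blowupschangeholdercenteredatzero} and equivalence of norms on the finite-dimensional space of degree-$d$ homogeneous polynomials; this already forces $\beta$-type control once we track exponents. For the lower-order derivatives one instead uses \eqref{blowupschangeholdermovingcenter}: applied with $r \simeq |a-b|$ and $x$ near $a, b$, it gives $|\tilde p^{(a)}(x - a) - \tilde p^{(b)}(x-b)| \le C|a-b|^d(|a-b|^{\beta/2} + |a-b|^\alpha) \le C|a-b|^{d + \beta/2}$, and comparing two polynomials of degree $\le d$ that are this close on a ball of radius $\simeq |a-b|$ around $a$ yields $|D^{\chi} P_a(b) - D^{\chi} P_b(b)| \le C|a-b|^{d - |\chi| + \beta/2}$ by standard inverse (Markov/Bernstein) inequalities for polynomials rescaled to the unit ball. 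Dividing by $|a-b|^{d - |\chi|}$ gives $\rho_\chi(K_{m,d}, r) \le C r^{\beta/2}$. Since $\beta/2 < \alpha/2$ was arbitrary in $(0,\alpha/2)$ — indeed, renaming, for any target exponent $\beta' \in (0,\alpha/2)$ we run the above with $\beta = 2\beta' < \alpha$ to obtain $\rho_\chi(K_{m,d}, r) \le C r^{\beta'}$ — the hypotheses of Theorem \ref{holderwhitneyextension} hold with exponent $\beta'$, and the theorem produces $F \in C^{d,\beta'}_{\mathrm{loc}}(\mathbb R^n)$, hence $F \in C^{d,\beta'}(B_m(0))$, with $D^\chi F(Q) = D^\chi P_Q(Q) = D^\chi \tilde p^{(Q)}(0)$ for all $Q \in K_{m,d}$ and $|\chi| \le d$. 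Noting $D^\chi \tilde p^{(Q)}(0)$ and $D^\chi p^{(Q)}(0)$ differ only by the positive scalar density factor, the claimed identity follows after rescaling, or one simply works with $P_Q = \tilde p^{(Q)}(\cdot - Q)$ throughout as above.

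The step I expect to be the main obstacle is the bookkeeping of exponents in passing from the $C(B_1)$ / $L^2$ closeness of the blowup polynomials to closeness of \emph{all} their derivatives at the base points — i.e., correctly invoking the equivalence of norms and the polynomial inverse inequalities on balls of radius comparable to $|a-b|$, and checking that the resulting exponent $\beta/2$ (rather than $\beta$ or $\alpha\beta/(\beta+2)$) is what survives uniformly for every multi-index $\chi$. The geometric content is entirely in Corollary \ref{c:blowupschangeholder}; the rest is the routine (but exponent-sensitive) verification that this is exactly the hypothesis Theorem \ref{holderwhitneyextension} asks for.
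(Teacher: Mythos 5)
Your proposal is correct and follows essentially the same route as the paper: both reduce the Whitney condition to bounding derivatives of the difference $H(x)=\tilde p^{(Q_1)}(x-Q_1)-\tilde p^{(Q_2)}(x-Q_2)$ at the base point, apply the moving-center estimate \eqref{blowupschangeholdermovingcenter} at scale $r\simeq|Q_1-Q_2|$ to get $\sup|H|\lesssim |Q_1-Q_2|^{d+\beta/2}$, convert this into derivative bounds by a rescaled inverse estimate (the paper uses interior estimates for the harmonic function $H$, you use Markov/Bernstein-type inequalities for degree-$d$ polynomials — the same rescaling content), and then rename exponents so the final Hölder exponent lies in $(0,\alpha/2)$, with the far-apart case handled by the uniform bounds of Remark \ref{rem:uniformity}. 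The separate treatment of the top-order derivatives via \eqref{blowupschangeholdercenteredatzero} is harmless but unnecessary, since the moving-center estimate already covers all orders.
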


\begin{proof}
Fix $Q_1, Q_2 \in K$ and consider the harmonic function $H(x) \colonequals \tilde{p}^{(Q_1)}(x-Q_1) -\tilde{p}^{(Q_2)}(x-Q_2)$. In this new notation, to satisfy conditions of Theorem \ref{holderwhitneyextension}, we must show $|D^\chi H(Q_1)| \leq C|Q_1-Q_2|^{\beta +d-|\chi|}$. Note, it suffices to prove this for $Q_1, Q_2$ close together, say $|Q_1-Q_2| <1/1000$. For $Q_1, Q_2$ further apart, the upper estimate is rendered trivial in light of the fact that that $\|\tilde{p}^{(Q_1)}\|_{C^{\ell, \beta}(B_1)} \leq C_{\ell,\beta}\|\tilde{p}^{(Q_1)}\|_{L^\infty(B_1)}$ for all $d$-homogenous harmonic polynomials (and that $\|\tilde{p}^{(Q_1)}\|_{L^\infty(B_1)}$ is bounded uniformly over $Q_1 \in K$ by Remark \ref{rem:uniformity}).

We have the classical interior harmonic estimate $$\sup_{B(Q_1, 2|Q_1-Q_2|)} |D^\chi H| \leq C|Q_1-Q_2|^{-|\chi|} \sup_{B(Q_1, 4|Q_1-Q_2|)} |H|.$$ As $B(Q_1, 4|Q_1-Q_2|) \subset B(Q_2, 10|Q_1-Q_2|)$, \eqref{blowupschangeholdermovingcenter} tells us $$\sup_{B(Q_1, 4|Q_1-Q_2|)} |H| \leq C|Q_1-Q_2|^d(|Q_1-Q_2|^\beta + |Q_1-Q_2|^{\alpha}) \leq C|Q_1-Q_2|^{\beta + d}.$$ Putting these estimates together, \begin{equation*}|D^\chi H(Q_1)| \leq \sup_{B(Q_1, 2|Q_1-Q_2|)} |D^\chi H| \leq C|Q_1-Q_2|^{\beta + d-|\chi|},\qedhere\end{equation*}
which guarantees that the hypothesis of Theorem \ref{holderwhitneyextension} hold.
\end{proof}

\begin{proof}[Proof of Proposition \ref{singularsetinmanifolds}]
Fix, $m, d, j \in \mathbb N$ and let $x_0 \in \Gamma_d^j \cap K_{m,d}$. We show that there is an open neighborhood $\mathcal O$ of $x_0$ such that $\mathcal O \cap ( \Gamma_d^j \cap K_{m,d})$ is contained in a $C^{1,\beta}$ manifold of dimension $j$. By compactness we can cover all of $\Gamma_d^j \cap K_{m,d}$ with finitely many of these such neighborhoods. Finally, $\Gamma_d^j \subset \Gamma_d = \bigcup_m K_{m,d}$ so taking a countable union of finite covers gives us the desired collection of $j$-dimensional $C^{1,\beta}$ manifolds.

By Lemma \ref{blowupssatisfywhitney} and Theorem \ref{holderwhitneyextension}, associated to $K_{m,d}$ is a $C^{1,\beta}$ function $F$ such that for each multi-index $\chi$ with $|\chi| \leq d$ we have $D^\chi F(Q) = D^\chi \tilde{p}^{(Q)}(0)$, where $Q \in K_{m,d}$. As $x_0 \in \Gamma_d^j$ we claim that there are $n-j$ multi-indices, $\{\chi_i\}_{i=1}^{n-j}$, with $|\chi_i| = d-1$ such that $v_i \colonequals \nabla D^{\chi_i} \tilde{p}^{x_0}(0)$ is a set of $n-j$ linearly independent vectors. For details see the proof of Theorem 1.3.8 in \cite{garofaloandpetrosyan}.

Define $\tilde{F}: \mathbb R^n \rightarrow \mathbb R^{n-j}$ by $\tilde{F}(x) =  (D^{\chi_1} F(x), D^{\chi_2}F(x),\ldots, D^{\chi_{n-j}}F(x))$. One can check that $K_{m,d} \subset \{\tilde{F} = 0\}$. On the other hand, by the claim above, $D F(x_0)$ has rank $n-j$ and therefore, by the implicit function theorem, there is a neighborhood of $x_0$ on which $\{\tilde{F}= 0\}$ is actually a $j$-dimensional $C^{1,\beta}$ manifold.
\end{proof}

\appendix

\section{An epiperimetric inequality for harmonic functions}

Recall that for any $f\in W_{loc}^{1,2}(\RR^n)$, $Q \in \RR^n$, $r > 0$, and $d\in(0,\infty)$, we define
\begin{equation*} W_d(r, Q, f) := \frac{1}{r^{n-2+2d}}\int_{B(Q,r)} |\nabla f|^2 dx -\frac{d}{r^{n-1+2d}}\int_{\partial B(Q,r)} f^2d\sigma.\end{equation*}

\begin{proposition}[an epiperimetric inequality for harmonic functions]\label{epiperimetricinequality-stadedagain}
For every integer $n\geq 2$ and real number $d>0$, there exists $\kappa\in(0,1)$ such that if $u \in W^{1,2}(B(Q,r))$ is homogeneous of degree $d$ about $Q$ and $f$ denotes the harmonic extension of $u|_{\partial B(Q,r)}$ to $B(Q,r)$, then
\begin{equation}\label{eq:epiperimetricinequality-statedagain} W_d(r,Q,f) \leq (1-\kappa)W_d(r,Q,u).\end{equation}
In fact, when $d$ is an integer we can take $\kappa = 1/(n + 2d-1)$.
\end{proposition}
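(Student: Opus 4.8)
The plan is to reduce to $Q=0$, $r=1$ and then diagonalize both sides of the inequality in the basis of spherical harmonics, turning the claim into an elementary term-by-term comparison of two quadratic forms. First I would normalize: replacing $f$ by $x\mapsto r^{-d}f(rx+Q)$ and $u$ likewise, I may assume $Q=0$ and $r=1$, since this change of variables preserves $d$-homogeneity, harmonicity, and the value of $W_d$ (cf. \eqref{newscaling} and the discussion around \eqref{formulaforwprime}). Then I would expand the common boundary trace in an $L^2(S^{n-1})$-orthonormal basis of spherical harmonics, $u|_{S^{n-1}}=\sum_{j\ge 0}\sum_{m=1}^{N_j}c_{j,m}Y_{j,m}$, where $Y_{j,m}$ has degree $j$, so $-\Delta_{S^{n-1}}Y_{j,m}=\lambda_j Y_{j,m}$ with $\lambda_j=j(j+n-2)$. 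The harmonic extension is $f=\sum c_{j,m}\,|x|^{j}Y_{j,m}(x/|x|)$ and the $d$-homogeneous extension is $u=\sum c_{j,m}\,|x|^{d}Y_{j,m}(x/|x|)$; both have the same trace, so their $L^2(\partial B_1)$ norms agree and equal $\sum c_{j,m}^2$. The $W^{1,2}$ hypothesis guarantees convergence of all the series that appear.

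The core computation is to evaluate the Dirichlet energies in polar coordinates $x=\rho\,\theta$. For a single mode $g=|x|^{a}Y(x/|x|)$ with $Y$ of degree $j$ one has $|\nabla g|^2=a^2\rho^{2a-2}Y^2+\rho^{2a-2}|\nabla_{S^{n-1}}Y|^2$; integrating in $\rho$ using $\int_{S^{n-1}}|\nabla_{S^{n-1}}Y|^2=\lambda_j\int_{S^{n-1}}Y^2$ and $\int_0^1\rho^{2a+n-3}\,d\rho=1/(2a+n-2)$ (valid whenever $2a+n-2>0$) gives $\int_{B_1}|\nabla g|^2=\frac{a^2+\lambda_j}{2a+n-2}\int_{S^{n-1}}Y^2$. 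With $a=j$ this collapses exactly to $j\int_{S^{n-1}}Y^2$, since $j^2+\lambda_j=j(2j+n-2)$; with $a=d$ it stays $\frac{d^2+\lambda_j}{2d+n-2}\int_{S^{n-1}}Y^2$. (The sole borderline case $j=0$, $n=2$ is harmless, as the coefficient $j^2+\lambda_j$ vanishes there.) Summing over the orthonormal basis and using $d^2+\lambda_j-d(2d+n-2)=(j-d)(j+d+n-2)$ yields the clean identities
\[
W_d(1,0,f)=\sum_{j,m}c_{j,m}^2\,(j-d),\qquad
W_d(1,0,u)=\sum_{j,m}c_{j,m}^2\,(j-d)\,\frac{j+d+n-2}{2d+n-2}.
\]

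Finally I would establish the inequality mode by mode. Write $\mu_j=\frac{j+d+n-2}{2d+n-2}>0$; it suffices to show $c_{j,m}^2(j-d)\le(1-\kappa)\,c_{j,m}^2(j-d)\mu_j$ for each $(j,m)$ and then sum. If $j<d$, then $j-d<0$ and $\mu_j<1$, so the inequality holds for every $\kappa\in(0,1)$; if $j=d$ (possible only when $d\in\mathbb N$) both sides vanish; if $j>d$, then $j-d>0$ and the inequality reduces to $(1-\kappa)\mu_j\ge 1$, and since $\mu_j$ is increasing in $j$ this is tightest at the least integer $j>d$. For integer $d$ that is $j=d+1$, where $\mu_{d+1}=\frac{2d+n-1}{2d+n-2}$, forcing exactly $\kappa=\frac{1}{n+2d-1}$; for non-integer $d$ one takes $\kappa=1-\mu_{\lceil d\rceil}^{-1}\in(0,1)$, which is positive because $\mu_{\lceil d\rceil}>1$. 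Summing the mode-by-mode inequalities gives \eqref{eq:epiperimetricinequality-statedagain}. I do not anticipate a real obstacle; the only points demanding care are the bookkeeping in the reduction to $r=1$ and the degenerate integral in the case $j=0$, $n=2$, both of which are routine.
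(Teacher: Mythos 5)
Your proposal is correct and follows essentially the same route as the paper's own proof: reduce to the unit ball, expand the common trace in spherical harmonics, compute $W_d(1,0,f)=\sum(j-d)c_j^2$ and $W_d(1,0,u)=\sum\frac{(j-d)(j+d+n-2)}{n+2d-2}c_j^2$, and compare mode by mode, with the constraint tightest at the least integer exceeding $d$ (giving $\kappa=1/(n+2d-1)$ for integer $d$ and the same value of $\kappa$ as the paper's $\frac{1+\lfloor d\rfloor-d}{n+\lfloor d\rfloor+d-1}$ in general). Your treatment of the $j<d$ modes and the degenerate $j=0$, $n=2$ integral is sound, so no gaps remain.
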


\begin{proof} Without loss of generality, we may assume that $Q=0$ and $r=1$. Let $u\in W^{1,2}(B(0,1))$ be homogeneous of degree $d\in(0,\infty)$. Then $c:= u|_{\partial B(0,1)}\in L^2(\partial B_1(0))$, and thus, the harmonic extension $f$ of $c$ to $B(0,1)$ is well-defined.

Expand $c=\sum_{j=1}^\infty c_j\phi^j$, where $\{\phi^j: j\geq 1\}$ denotes a sequence of spherical harmonics that form an orthonormal basis of $L^2(\partial B(0,1))$. For each $j\geq 1$, let $d_j=\deg \phi^j$. Then $$u(r,\theta) = \sum_{j=1}^\infty c_j r^d\phi^j(\theta)\quad\text{and}\quad f(r,\theta) = \sum_{j=1}^\infty c_j r^{d_j} \phi^j(\theta).$$
On one hand, since $f$ is harmonic, $$\int_{B_1(0)} |\nabla f|^2\ dx = \int_{\partial B_1(0)} (x\cdot \nabla f)f\ d\sigma = \sum_{j=1}^\infty d_j c_j^2,$$ which implies \begin{equation}\label{wofharmonicextension} W_d(1, 0, f) = \sum_{j=1}^\infty (d_j-d)c_j^2.
\end{equation}
On the other hand, $$|\nabla u|^2 = (\partial_r u)^2 +\frac{1}{r^2}(\partial_\theta u)^2 = r^{2d-2}\left[\left(\sum_{j=1}^\infty dc_j\phi^j(\theta)\right)^2 + \left(\sum_{j=1}^\infty c_j \partial_\theta\phi^j(\theta)\right)^2\right].$$
Hence $$\int_{B_1(0)}|\nabla u|^2\ dx = \frac{d^2}{n+2d-2}\sum c_j^2 + \frac{1}{n+2d-2}\sum c_j^2 \int_{\partial B_1(0)} (\partial_\theta \phi^j)^2\ d\theta.$$
Note that $\phi^j$ is an eigenfunction of the Laplace-Beltrami operator on the sphere with eigenvalue $\lambda_j := d_j(n+d_j-2)$. Thus, $$\int_{\partial B_1(0)} (\partial_\theta\phi^j)^2\ d\theta = \lambda_j \int_{\partial B_1(0)} (\phi^j)^2\ d\theta = d_j(n+d_j-2).$$
All together, \begin{equation*} \int_{B_1(0)}|\nabla u|^2\ dx = \sum \frac{d^2+d_j(n+d_j-2)}{n+2d-2}\,c_j^2.\end{equation*} Therefore, \begin{equation}\label{wofhomogeneousextension}
W_d(1, 0,u) = \sum_{j=1}^\infty \frac{d^2 +d_j(n+d_j-2)-d(n+2d-2)}{n+2d-2}\,c_j^2.\end{equation}
Compare \eqref{wofharmonicextension} and \eqref{wofhomogeneousextension}. To complete the proof, it suffices to find $\kappa \in (0,1)$ such that $$j-d \leq (1-\kappa)\frac{d^2 +j(n+j-2)-d(n+2d-2)}{n+2d-2}\quad\text{ for all $j\in \mathbb N$,}$$ or equivalently, \begin{equation}\label{jd} j-d \leq (1-\kappa)\frac{(j-d)(n+j+d-2)}{n+2d-2}\quad\text{for all }j\in\mathbb{N}.\end{equation} When $j\leq d$, inequality \eqref{jd} holds for any $\kappa\geq 0$. When $j>d$, or equivalently, when $j\geq \lfloor d\rfloor+1$, inequality \eqref{jd} holds provided that $$1\leq (1-\kappa) \frac{n+j+d-2}{n+2d-2}.$$ Thus, it suffices to choose $\kappa\in(0,1)$ so that $$1= (1-\kappa)\frac{n+\lfloor d\rfloor + 1+d-2}{n+2d-2}.$$ Therefore, \eqref{eq:epiperimetricinequality-statedagain} holds with \begin{equation}\kappa:= \frac{1+\lfloor d\rfloor - d}{n+\lfloor d\rfloor+d-1}. \end{equation} Note that when $d \in \mathbb Z$ we get the desired formula for $\kappa$.
\end{proof}

\bibliography{sreg-refs}{}

\providecommand{\bysame}{\leavevmode\hbox to3em{\hrulefill}\thinspace}
\providecommand{\MR}{\relax\ifhmode\unskip\space\fi MR }
% \MRhref is called by the amsart/book/proc definition of \MR.
\providecommand{\MRhref}[2]{%
  \href{http://www.ams.org/mathscinet-getitem?mr=#1}{#2}
}
\providecommand{\href}[2]{#2}
\begin{thebibliography}{GPSVG16}

\bibitem[Alm00]{almgrenbigregularity}
Frederick~J. Almgren, Jr., \emph{Almgren's big regularity paper}, World
  Scientific Monograph Series in Mathematics, vol.~1, World Scientific
  Publishing Co., Inc., River Edge, NJ, 2000, $Q$-valued functions minimizing
  Dirichlet's integral and the regularity of area-minimizing rectifiable
  currents up to codimension 2, With a preface by Jean E. Taylor and Vladimir
  Scheffer. \MR{1777737}

\bibitem[AM17]{am-elliptic}
Jonas Azzam and Mihalis Mourgoglou, \emph{Tangent measures of elliptic harmonic
  measure and applications}, preprint, \textsf{arXiv:1708.03571}, to appear in
  \emph{Anal. \& PDE}, 2017.

\bibitem[AMT17]{amt-twophase}
Jonas Azzam, Mihalis Mourgoglou, and Xavier Tolsa, \emph{Mutual absolute
  continuity of interior and exterior harmonic measure implies rectifiability},
  Comm. Pure Appl. Math. \textbf{70} (2017), no.~11, 2121--2163. \MR{3707490}

\bibitem[AMT19]{amt-ur}
Jonas Azzam, Mihalis Mourgoglou, and Xavier Tolsa, \emph{A two-phase free
  boundary problem for harmonic measure and uniform rectifiability}, preprint,
  \textsf{arXiv:1710.10111v3}, 2019.

\bibitem[AMTV16]{amtv-twophase}
Jonas Azzam, Mihalis Mourgoglou, Xavier Tolsa, and Alexander Volberg, \emph{On
  a two-phase problem for harmonic measure in general domains}, preprint,
  \textsf{arxiv:1609.06133}, to appear in \emph{Amer. J. Math.}, 2016.

\bibitem[Bad11]{badgerharmonicmeasure}
Matthew Badger, \emph{Harmonic polynomials and tangent measures of harmonic
  measure}, Rev. Mat. Iberoam. \textbf{27} (2011), no.~3, 841--870.
  \MR{2895335}

\bibitem[Bad13]{badgerflatpoints}
Matthew Badger, \emph{Flat points in zero sets of harmonic polynomials and
  harmonic measure from two sides}, J. Lond. Math. Soc. (2) \textbf{87} (2013),
  no.~1, 111--137. \MR{3022709}

\bibitem[Bee93]{beer}
Gerald Beer, \emph{Topologies on closed and closed convex sets}, Mathematics
  and its Applications, vol. 268, Kluwer Academic Publishers Group, Dordrecht,
  1993. \MR{1269778}

\bibitem[BET17]{BETHarmonicpoly}
Matthew Badger, Max Engelstein, and Tatiana Toro, \emph{Structure of sets which
  are well approximated by zero sets of harmonic polynomials}, Anal. PDE
  \textbf{10} (2017), no.~6, 1455--1495. \MR{3678494}

\bibitem[Bis92]{Bishop-questions}
Christopher~J. Bishop, \emph{Some questions concerning harmonic measure},
  Partial differential equations with minimal smoothness and applications
  ({C}hicago, {IL}, 1990), IMA Vol. Math. Appl., vol.~42, Springer, New York,
  1992, pp.~89--97. \MR{1155854 (93f:30023)}

\bibitem[BL15]{localsetapproximation}
Matthew Badger and Stephen Lewis, \emph{Local set approximation:
  {M}attila-{V}uorinen type sets, {R}eifenberg type sets, and tangent sets},
  Forum Math. Sigma \textbf{3} (2015), e24, 63. \MR{3482273}

\bibitem[CKL05]{CKL}
Luca Capogna, Carlos~E. Kenig, and Loredana Lanzani, \emph{Harmonic measure},
  University Lecture Series, vol.~35, American Mathematical Society,
  Providence, RI, 2005, Geometric and analytic points of view. \MR{2139304}

\bibitem[CNV15]{cheegernabervaltorta}
Jeff Cheeger, Aaron Naber, and Daniele Valtorta, \emph{Critical sets of
  elliptic equations}, Communications on Pure and Applied Mathematics
  \textbf{68} (2015), 173--209.

\bibitem[CSV17]{CSVEpi}
Maria Colombo, Luca Spolaor, and Bozhidar Velichkov, \emph{Direct epiperimetric
  inequalities for the thin obstacle problem and applications}, preprint,
  \textsf{arxiv:1709.03120}, to appear in \emph{Comm. Pure Appl. Math.}, 2017.

\bibitem[CSV18]{CoSpVe1}
Maria Colombo, Luca Spolaor, and Bozhidar Velichkov, \emph{A logarithmic
  epiperimetric inequality for the obstacle problem}, Geom. Funct. Anal.
  \textbf{28} (2018), no.~4, 1029--1061. \MR{3820438}

\bibitem[DET19]{DETAM}
Guy David, Max Engelstein, and Tatiana Toro, \emph{Free boundary regularity for
  almost-minimizers}, Adv. Math. \textbf{350} (2019), 1109--1192. \MR{3948692}

\bibitem[DKT01]{DKT}
Guy David, Carlos Kenig, and Tatiana Toro, \emph{Asymptotically optimally
  doubling measures and {R}eifenberg flat sets with vanishing constant}, Comm.
  Pure Appl. Math. \textbf{54} (2001), no.~4, 385--449. \MR{1808649}

\bibitem[DT15]{davidtoroalmostminimizers}
G.~David and T.~Toro, \emph{Regularity of almost minimizers with free
  boundary}, Calc. Var. Partial Differential Equations \textbf{54} (2015),
  no.~1, 455--524. \MR{3385167}

\bibitem[Eng16]{engelsteintwophase}
Max Engelstein, \emph{A two-phase free boundary problem for harmonic measure},
  Ann. Sci. \'Ec. Norm. Sup\'er. (4) \textbf{49} (2016), no.~4, 859--905.
  \MR{3552015}

\bibitem[ESV18]{esv1}
Max Engelstein, Luca Spolaor, and Bozhidar Velichkov, \emph{Uniqueness of the
  blow-up at isolated singularities for the {A}lt-{C}affarelli functional},
  preprint, \textsf{arXiv:1801.09276}, 2018.

\bibitem[ESV19]{esv2}
Max Engelstein, Luca Spolaor, and Bozhidar Velichkov,
  \emph{({L}og-)epiperimetric inequality and regularity over smooth cones for
  almost area-minimizing currents}, Geom. Topol. \textbf{23} (2019), no.~1,
  513--540. \MR{3921325}

\bibitem[Fal97]{falconer}
Kenneth Falconer, \emph{Techniques in fractal geometry}, John Wiley \& Sons,
  Ltd., Chichester, 1997. \MR{1449135}

\bibitem[FS16]{FocardiSpadaro}
Matteo Focardi and Emanuele Spadaro, \emph{An epiperimetric inequality for the
  thin obstacle problem}, Adv. Differential Equations \textbf{21} (2016),
  no.~1-2, 153--200. \MR{3449333}

\bibitem[Gla58]{glaeser}
Georges Glaeser, \emph{\'etude de quelques alg\`ebres tayloriennes}, J. Analyse
  Math. \textbf{6} (1958), 1--124; erratum, insert to 6 (1958), no. 2.
  \MR{0101294}

\bibitem[GM05]{GM}
John~B. Garnett and Donald~E. Marshall, \emph{Harmonic measure}, New
  Mathematical Monographs, vol.~2, Cambridge University Press, Cambridge, 2005.
  \MR{2150803}

\bibitem[GO76]{GarnettOFarrell}
John~B. Garnett and Anthony~G. O'Farrell, \emph{Sobolev approximation by a sum
  of subalgebras on the circle}, Pacific J. Math. \textbf{65} (1976), no.~1,
  55--63. \MR{0419775}

\bibitem[GP09]{garofaloandpetrosyan}
Nicola Garofalo and Arshak Petrosyan, \emph{Some new monotonicity formulas and
  the singular set in the lower dimensional obstacle problem}, Invent. Math.
  \textbf{177} (2009), no.~2, 415--461. \MR{2511747}

\bibitem[GPSVG16]{gpsvgepiperimetric}
Nicola Garofalo, Arshak Petrosyan, and Mariana Smit Vega~Garcia, \emph{An
  epiperimetric inequality approach to the regularity of the free boundary in
  the {S}ignorini problem with variable coefficients}, J. Math. Pures Appl. (9)
  \textbf{105} (2016), no.~6, 745--787. \MR{3491531}

\bibitem[Hel09]{helms}
Lester~L. Helms, \emph{Potential theory}, Universitext, Springer-Verlag London,
  Ltd., London, 2009. \MR{2526019}

\bibitem[JK82]{jerisonandkenig}
David~S. Jerison and Carlos~E. Kenig, \emph{Boundary behavior of harmonic
  functions in nontangentially accessible domains}, Adv. in Math. \textbf{46}
  (1982), no.~1, 80--147. \MR{676988}

\bibitem[KPT09]{kenigpreisstoro}
C.~Kenig, D.~Preiss, and T.~Toro, \emph{Boundary structure and size in terms of
  interior and exterior harmonic measures in higher dimensions}, J. Amer. Math.
  Soc. \textbf{22} (2009), no.~3, 771--796. \MR{2505300}

\bibitem[KT97]{kenigtoroduke}
Carlos~E. Kenig and Tatiana Toro, \emph{Harmonic measure on locally flat
  domains}, Duke Math. J. \textbf{87} (1997), no.~3, 509--551. \MR{1446617}

\bibitem[KT99]{kenigtoroannals}
Carlos~E. Kenig and Tatiana Toro, \emph{Free boundary regularity for harmonic
  measures and {P}oisson kernels}, Ann. of Math. (2) \textbf{150} (1999),
  no.~2, 369--454. \MR{1726699}

\bibitem[KT06]{kenigtorotwophase}
Carlos Kenig and Tatiana Toro, \emph{Free boundary regularity below the
  continuous threshold: 2-phase problems}, J. Reine Angew. Math. \textbf{596}
  (2006), 1--44. \MR{2254803}

\bibitem[Lew15]{lewisconepoints}
Stephen Lewis, \emph{Singular points of {H}\"older asymptotically optimally
  doubling measures}, Calc. Var. Partial Differential Equations \textbf{54}
  (2015), no.~4, 3667--3713. \MR{3426091}

\bibitem[LM15]{lm15}
Alexander Logunov and Eugenia Malinnikova, \emph{On ratios of harmonic
  functions}, Adv. Math. \textbf{274} (2015), 241--262. \MR{3318150}

\bibitem[McC19]{mccurdy}
S.~Ries McCurdy, \emph{Estimates on the effective critical strata of {G}reen's
  functions}, preprint, \textsf{1904.09361}, 2019.

\bibitem[NV17]{rectifiable-Reifenberg}
Aaron Naber and Daniele Valtorta, \emph{Rectifiable-{R}eifenberg and the
  regularity of stationary and minimizing harmonic maps}, Ann. of Math. (2)
  \textbf{185} (2017), no.~1, 131--227. \MR{3583353}

\bibitem[Pre87]{preiss}
David Preiss, \emph{Geometry of measures in {${\bf R}^n$}: distribution,
  rectifiability, and densities}, Ann. of Math. (2) \textbf{125} (1987), no.~3,
  537--643. \MR{890162}

\bibitem[Rei64a]{ReifenbergEpi1}
E.~R. Reifenberg, \emph{An epiperimetric inequality related to the analyticity
  of minimal surfaces}, Ann. of Math. (2) \textbf{80} (1964), 1--14.
  \MR{0171197}

\bibitem[Rei64b]{ReifenbergEpi2}
E.~R. Reifenberg, \emph{On the analyticity of minimal surfaces}, Ann. of Math.
  (2) \textbf{80} (1964), 15--21. \MR{0171198}

\bibitem[Tay76]{TaylorEpi}
Jean~E. Taylor, \emph{The structure of singularities in solutions to
  ellipsoidal variational problems with constraints in {${\rm R}^{3}$}}, Ann.
  of Math. (2) \textbf{103} (1976), no.~3, 541--546. \MR{0428182}

\bibitem[Wei99]{weissepiperimetric}
Georg~S. Weiss, \emph{A homogeneity improvement approach to the obstacle
  problem}, Invent. Math. \textbf{138} (1999), no.~1, 23--50. \MR{1714335}

\end{thebibliography}
\bibliographystyle{amsbeta}

\end{document}